\newtheorem{theorem}{Theorem}[section]
\newtheorem{lemma}[theorem]{Lemma}
\newtheorem{corollary}[theorem]{Corollary}
\newtheorem{prop}[theorem]{Proposition}
\newtheorem*{con7*}{Conjecture 7*}
\newtheorem*{Remark}{Remark}
\theoremstyle{definition}
\newtheorem{definition}[theorem]{Definition}
\newtheorem{example}[theorem]{Example}
\newtheorem{Fact}[theorem]{Fact}
\newtheorem*{th48}{Theorem 4.8}
\newtheorem*{cor49}{Corollary 4.9}
\newtheorem*{th63}{Theorem 6.3}
\newtheorem*{th64}{Theorem 6.4}
\newtheorem*{cor65}{Corollary 6.5}
\theoremstyle{remark}
\newtheorem{remark}[theorem]{Remark}
\numberwithin{equation}{theorem}
\DeclareMathOperator{\Aut}{Aut}
\DeclareMathOperator{\Inn}{Inn}
\DeclareMathOperator{\coker}{coker}
\title[The Box-Tensor Product]{Two generalizations of the
nonabelian tensor product}
\author{Manuel Ladra \and Viji Z. Thomas}
 \address{M. Ladra\\ Department of Algebra, University of Santiago de Compostela,
15782, Spain.}
 \email {manuel.ladra@usc.es}
\address{V.Z. Thomas \\ School of Mathematics, Tata Institute of Fundamental Research,
Mumbai, Maharashtra 400005, India.}
\email{vthomas@math.tifr.res.in}
\subjclass[2010]{Primary 20F24, Secondary 20J99, 20F99}
\keywords{box-tensor product, nonabelian tensor product, nonabelian homology groups, finite groups, GAP}
\thanks{The first author was supported by Ministerio
de Ciencia e Innovaci\'on (European FEDER support included), grant
MTM2009-14464-C02-01, and by Xunta de Galicia, grant Incite09 207
215 PR}
\begin{document}

\begin{abstract}
The purpose of this paper is two fold. First we introduce the box-tensor product of
two groups as a generalization of the
nonabelian tensor product of groups. We extend various results for nonabelian tensor
products to the box-tensor product such as the finiteness of the product when each
factor is finite. This would give yet another proof of Ellis's theorem on the
finiteness of the nonabelian tensor product of groups when each factor is finite.
Secondly, using the methods developed in proving the finiteness of the box-tensor
product, we prove the finiteness of Inassaridze's tensor product under some
additional hypothesis which generalizes his results on the finiteness of his
product. In addition, we prove an Ellis like finiteness theorem under weaker assumptions,
which is a generalization of his theorem on the finiteness of nonabelian tensor product.
As a consequence, we prove the finiteness of low-dimensional nonabelian
homology groups.
\end{abstract}

\maketitle

\section{Introduction}
R. Brown and J.-L. Loday introduced the nonabelian tensor product $G\otimes H$ for a
pair of groups $G$ and $H$ in \cite{BL1} and \cite{BL2} in the context of an
application in homotopy theory. In \cite{NI}, N. Inassaridze extends the
construction of the nonabelian tensor product as given in \cite{BL2}. There he does
not require that the mutual actions satisfy the compatibility conditions. He
constructs the homology groups of groups with coefficients in any group as the left
derived functors of the nonabelian tensor product defined by him, thereby
generalizing the classical theory of homology of groups. He also gives an application
to algebraic $K$-theory of noncommutative local rings.
It is not known if the tensor product $G\otimes H$ introduced in \cite{NI} is finite
when the two groups $G$ and $H$ are finite.
The topic of this paper is to give a generalization of the nonabelian tensor product
of groups as defined in \cite{BL1} and \cite{BL2}, which we call the box-tensor
product and denote it by $G\boxtimes H$.
We prove the finiteness of $G\boxtimes H$ when both $G$ and $H$ are finite.
As an immediate consequence of this result,  we get a different proof of the
finiteness of $G\otimes H$, which are the topics of the papers \cite{E} and
\cite{VT}.
Using the methods developed in proving the finiteness of the box-tensor product when
each of the factors is finite,
we prove the finiteness of the tensor product (cf. Theorem \ref{niko1}) introduced
by N. Inassaridze in \cite{NI} generalizing Theorems 9 and 12 of \cite{NI1}. Finally, we prove that if $G$ and $H$ are finite groups acting on each other and if the mutual actions are half compatible (cf. Definition \ref{hc}), then $G\otimes H$ is finite (cf. Theorem \ref{niko2}).

Consider two groups $G$ and $H$ acting on themselves and on each other.
In the context of this paper all actions will be by automorphisms.
Let $\Aut(G)$ and $\Aut(H)$ be the automorphism groups of $G$ and $H$, respectively.
Consider $\rho_{G} \colon G\rightarrow \Aut(G)$, $\rho_{H} \colon H \rightarrow
\Aut(H)$, $\sigma_{G}\colon G\rightarrow \Aut(H)$ and $\sigma_{H}\colon H\rightarrow
\Aut(G)$. We will write $^gh$ for $\sigma_{G}(g)(h)$ and $^gg'$ for
$\rho_{G}(g)(g')$ and likewise $^hg$ for $\sigma_{H}(h)(g)$ and $^hh'$
for $\rho_{H}(h)(h')$, where $g,g'\in G$ and $h,h'\in H$. Note that $^gg$ need not
be equal to $g$. As in the case of the nonabelian tensor product, we impose a
compatibility condition on the actions.

\begin{definition}\label{D:1.1}
Let $G$ and $H$ be groups which act on themselves and on each other. The mutual
actions are said to be \emph{fully compatible} if
\begin{equation}\label{fc}
 ^{(^ab)}c=\;^a\big(^b(^{a^{-1}}c)\big)
\end{equation}
for all $a,b,c\in G\cup H$.
\end{definition}

For groups with fully compatible actions, the box-tensor product is then defined as
follows.

\begin{definition}\label{D:1.2}
If $G$ and $H$ are groups which act fully compatibly on each other, then the
\emph{box-tensor
product} $G\boxtimes H$ is the group generated by the symbols $g\boxtimes h$ for
$g\in G$ and $h\in H$ with relations
\begin{align}
gg'\boxtimes h \ = \ & (^gg'\boxtimes \;^gh)(g\boxtimes h), \label{E:1.2.1} \\
g\boxtimes hh' \ = \ & (g\boxtimes h)(^hg\boxtimes \;^hh'), \label{E:1.2.2}
\end{align}
 for all $g,g'\in G$ and $h,h'\in H$.
\end{definition}

We now define actions that are compatible.

\begin{definition}\label{D:1.3}
Let $G$ and $H$ be groups that act on themselves by conjugation and each of which
acts on the other. The mutual actions are said to be \emph{compatible} if
\begin{align}
 ^{(^h g)}h' \ = \ & ^h\big(^g(^{h^{-1}}h')\big) \qquad \mbox{and}  \label{E:1.3.1} \\
 ^{(^g h)}g' \ =  \ & ^g\big(^h(^{g^{-1}}g')\big), \label{E:1.3.2}
\end{align}
for all $g,g'\in G$ and $h,h'\in H$.
\end{definition}

\begin{Fact} If $G$ and $H$ are groups acting on themselves by conjugation and each
of which acts on the other, then the following relations always hold.
\begin{equation}\label{fact}
^{(^gh)}h'=\;^g \big(^h(^{g^{-1}}h')\big) \quad \mbox{and} \quad
^{(^hg)}g'=\;^h\big(^g(^{h^{-1}}g')\big)\,,
\end{equation}
\end{Fact}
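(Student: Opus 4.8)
The plan is to verify each of the two identities in \eqref{fact} by simply unravelling the notation and expanding both sides, using nothing beyond the standing hypotheses: $G$ acts on itself by conjugation, $H$ acts on itself by conjugation, and every action in sight is by automorphisms, so in particular $\sigma_{G}\colon G\to\Aut(H)$ and $\sigma_{H}\colon H\to\Aut(G)$ are group homomorphisms.

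For the first identity I would argue as follows. Since $^gh\in H$ and $H$ acts on itself by conjugation, the left-hand side is literally $^{(^gh)}h'=({}^gh)\,h'\,({}^gh)^{-1}$. For the right-hand side, put $h''={}^{g^{-1}}h'\in H$; conjugation by $h$ in $H$ gives $^h h''=h\,h''\,h^{-1}$, and applying the automorphism $\sigma_{G}(g)$ of $H$ and distributing it over this product yields ${}^g\bigl({}^h h''\bigr)=({}^gh)\,\bigl({}^g h''\bigr)\,({}^gh)^{-1}$. Finally ${}^g h''={}^g({}^{g^{-1}}h')=h'$, because $\sigma_{G}(g)\,\sigma_{G}(g^{-1})=\sigma_{G}(gg^{-1})=\sigma_{G}(1)=\id_H$. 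Hence both sides equal $({}^gh)\,h'\,({}^gh)^{-1}$, which proves ${}^{(^gh)}h'={}^g\bigl({}^h({}^{g^{-1}}h')\bigr)$.

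The second identity is obtained by interchanging the roles of $G$ and $H$ verbatim: $^hg\in G$, so $^{(^hg)}g'=({}^hg)\,g'\,({}^hg)^{-1}$ since $G$ acts on itself by conjugation, and the right-hand side expands through conjugation in $G$ together with the automorphism $\sigma_{H}(h)$ to the same expression, using $\sigma_{H}(h)\,\sigma_{H}(h^{-1})=\id_G$.

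There is no real obstacle here; the computation is routine bookkeeping. The one point worth flagging is \emph{why} these two relations hold unconditionally while the formally similar relations \eqref{E:1.3.1}--\eqref{E:1.3.2} of Definition \ref{D:1.3} do not: in \eqref{fact} the outermost operation $^{(^gh)}(-)$ is the action of an element of $H$ on $H$, i.e. ordinary conjugation, so the whole identity collapses to the automorphism property of $\sigma_{G}(g)$; by contrast, in \eqref{E:1.3.1} the outermost operation $^{(^hg)}(-)$ is the \emph{cross} action of an element of $G$ on $H$, for which no such internal simplification is available, so there compatibility is a genuine extra hypothesis.
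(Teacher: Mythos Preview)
Your proof is correct and is exactly the natural verification: both identities reduce to the fact that the cross actions $\sigma_G$ and $\sigma_H$ are group homomorphisms into automorphism groups, combined with the hypothesis that each group acts on itself by conjugation. The paper does not actually supply a proof of this statement --- it is recorded simply as a ``Fact'' and left to the reader --- so there is nothing to compare against; your argument is the intended one-line check, and your closing remark explaining the contrast with \eqref{E:1.3.1}--\eqref{E:1.3.2} correctly identifies why those require a genuine compatibility hypothesis while \eqref{fact} does not.
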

\noindent for all $g,g'\in G$ and $h,h'\in H$.

Now we define half compatibility, a terminology that is borrowed from \cite{NI1}.

\begin{definition}\label{hc}
Let $G$ and $H$ be groups acting on each other and acting on themselves by
conjugation. We say that the mutual actions are \emph{half compatible} if either
\eqref{E:1.3.1} or \eqref{E:1.3.2} holds.
\end{definition}

\begin{remark}\label{rmk}
It should be noted that half compatibility defined in \cite{NI1} is different than
the definition given here. We borrow that terminology and use it here merely as a
suggestive name to indicate that the mutual actions are half compatible if it
satisfies one of the two compatibility conditions i.e. satisfies half of the
compatibility conditions.
\end{remark}

With this set up, we now state our main results.

\begin{th48} Let $G$ and $H$ be finite groups acting on each other. If the mutual
actions are fully compatible, then $G\boxtimes H$ is finite.
\end{th48}

As a corollary to the above theorem, we get the finiteness of the nonabelian tensor product when each of the factors is finite, which was first proved by Ellis in \cite{E}. A homology free proof of this finiteness result of Ellis can be found in \cite{VT}

\begin{cor49} Let $G$ and $H$ be finite groups acting on each other. If the mutual actions are compatible, then $G\otimes H$ is finite.
\end{cor49}

In the next two Theorems, $G\otimes H$ indicates the tensor product introduced by N.
Inassaridze in \cite{NI}. The next Theorem generalizes Theorems 9 and 12 of
\cite{NI1}.
\begin{th63}
Let $G$ and $H$ be finite groups acting on themselves by conjugation and each of
which acts on the other. If $G$ acts on $H$ trivially, then $G\otimes H$ is finite.
\end{th63}

We also prove that $G\otimes H$ is finite if the mutual actions are
half compatible. Note that the tensor product thus formed is a special case of Inassaridze's tensor product but it is more general than the nonabelian tensor product introduced in \cite{BL1} and \cite{BL2}. Hence the next theorem is a generalization of Ellis-Thomas's theorem on the finiteness of the nonabelian tensor product as given in \cite{E} and \cite{VT}.

\begin{th64}
Let $G$ and $H$ be finite groups acting on each other. If the mutual actions are half compatible, then $G\otimes
H$ is finite.
\end{th64}

As a result of the above theorem, we obtain finiteness of the low dimensional
nonabelian homology groups.

\begin{cor65}
Let $G$ and $A$ be finite groups acting on each other. If the mutual actions are
half compatible, then the nonabelian homology groups $H_i(G,A)$ are finite when
$i=0,1$.
\end{cor65}

The nonabelian tensor product $G\otimes H$ as introduced in \cite{BL1} and
\cite{BL2} becomes a special case of the box-tensor product
 when the groups act on themselves by conjugation, that is $^gg'=gg'g^{-1}$ for all
$g,g'\in G$
 and $^hh'=hh'h^{-1}$ for all $h,h'\in H$. The special case where $G=H$,
  and $\rho_{G}(G)=\rho_{H}(H)=\sigma_{G}(G)=\sigma_{H}(H)$ is called the box-square.
  It is uniquely defined. Note that Definition \ref{D:1.1} is more restrictive than
Definition \ref{D:1.3}.

In Section \ref{S:2}, we study the different compatibility conditions. In
particular, we show that if $G$ and $H$ are groups acting on themselves by
conjugation and each of which acts on the other, then a compatible action is fully
compatible. We give an example showing that a compatible action need not be fully
compatible. We also give an example of an action which is fully compatible.

The topic of this paper is to extend various results which hold for the nonabelian
tensor product to the box-tensor product.
In Section \ref{S:3} this is done for various general results and a crossed module
associated with the box-tensor product is introduced.
The familiar crossed module associated with the nonabelian tensor product is a
special case of the one introduced here for the box-tensor product.  We also show
that if $\overline{D_H(G)}$ (see Definition \ref{D:deri_devi}) is locally cyclic,
then $G\boxtimes H$ is abelian (Theorem \ref{T:2.11}).
We don't know if $G\boxtimes H$ is abelian when both $G$ and $H$ are abelian.
A natural question arises whether the box-tensor product $G\boxtimes H$ is solvable,
nilpotent when $G$ and $H$ are solvable, nilpotent respectively.
In \cite{KT} the authors provide similar results to the one found in \cite{MV} on
the nilpotency and solvability of the box-tensor product $G\boxtimes H$.

As in the case of the nonabelian tensor product, the question arises if the
box-tensor product of two finite groups is finite. Already in \cite{BL2} it is
established that the nonabelian tensor square of a finite group is finite.
In \cite{E}, Ellis shows that the nonabelian tensor product of two finite groups is
finite, using in his proof part of an exact sequence from homology given in
\cite{BL1} and the fact that the homology of a finite group is finite.
Already in \cite{BJR} and \cite{LCK}, the question is asked if a purely group
theoretic result can be given.
In \cite{VT}, the author provides such a proof for the nonabelian tensor product.
In Section \ref{S:4}, a purely group theoretic proof will be given that the
box-tensor product of two finite groups is finite (Theorem \ref{T:3.8}). This proof
is different from the one given in \cite{VT}.

The topic of Section \ref{S:5} is a general construction for the box-tensor product.
For the nonabelian tensor product such constructions were given by Ellis and Leonard
in \cite{E1} and Rocco in \cite{NR}.
In \cite{E1}, the authors note that their result was not new but an adaptation of
those given in \cite{GH} and \cite{E2}.
Both these papers are written in the language of crossed modules. The construction
for the box-tensor product given in Section \ref{S:5} combines ideas of \cite{E1}
and \cite{NR}.

In Section \ref{S:6}, we consider the tensor product introduced by N. Inassaridze in
\cite{NI}. It is an open question whether the tensor product $G\otimes H$ introduced
by N. Inassaridze is finite whenever each of the factors is finite. In Theorems
\ref{T1} and \ref{T2}, the author of \cite{NI1} proves the finiteness of $G\otimes
H$ when the factors are finite along with some additional hypothesis. We generalize
his results on finiteness (cf. Theorem \ref{niko1}).
 We also prove the finiteness of $G\otimes H$ if both $G$ and $H$ are finite groups
and if the mutual actions are half compatible. As an application, we prove the
finiteness of the low dimensional non-abelian homology groups.

\section{Compatibility and Full Compatibility} \label{S:2}

In this section we compare compatibility and full compatibility conditions. It
should be noted that the condition \eqref{fc} for full compatibility yields 8
conditions, 4 pairs of conditions which we now give for the readers convenience.
\begin{align}
 ^{(^gh)}g' & \ = \ \;^g\big(^h(^{g^{-1}}g')\big)  & \mbox{or} \qquad  \qquad
^{(^hg)}h' & \ = \ \;^h\big(^g(^{h^{-1}}h')\big)  \label{fc1} \\
^{(^gg')}g'' & \ = \ \;^g\big(^{g'}(^{g^{-1}}g'')\big) &  \mbox{or}  \; \; \quad
\qquad  ^{(^hh')}h'' &  \ = \ \;^h\big(^{h'}(^{h^{-1}}h'')\big)  \label{fc2} \\
^{(^gg')}h & \ = \ \;^g\big(^{g'}(^{g^{-1}}h)\big) &  \mbox{or} \qquad \qquad
^{(^hh')}g & \ = \ \;^h\big(^{h'}(^{h^{-1}}g)\big)          \label{fc3} \\
^{(^gh)}h' & \ = \ \;^g\big(^{h}(^{g^{-1}}h')\big) & \mbox{or}  \qquad \qquad
^{(^hg)}g' & \ = \ \;^h\big(^g(^{h^{-1}}g')\big)       \label{fc4}
\end{align}
for all $g,g',g''\in G$ and $h,h',h''\in H$.

Next we want to show that when the groups act on themselves by conjugation, a
compatible action is fully compatible.

\begin{lemma} Let $G$ and $H$ be groups that act on themselves by conjugation and
each of which acts on the other. If the mutual actions are compatible, then they are
fully compatible.
\end{lemma}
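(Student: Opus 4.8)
The plan is to verify the full compatibility condition \eqref{fc}, namely $^{(^ab)}c = {}^a({}^b({}^{a^{-1}}c))$, for all $a,b,c \in G \cup H$, by checking each of the eight subcases organized in the four pairs \eqref{fc1}--\eqref{fc4}. By the symmetry of the hypothesis in $G$ and $H$, it suffices to handle one member of each pair. The cases in \eqref{fc1} are precisely the compatibility conditions \eqref{E:1.3.1} and \eqref{E:1.3.2}, so they hold by assumption, and the cases in \eqref{fc4} are exactly the identities \eqref{fact} from the Fact, which hold automatically because the groups act on themselves by conjugation. So the real content is in verifying \eqref{fc2} and \eqref{fc3}.

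For \eqref{fc2}, I would observe that all three elements $g, g', g''$ lie in $G$, which acts on itself by conjugation. Then $^{(^gg')}g'' = ({}^gg')g''({}^gg')^{-1}$, while the right-hand side ${}^g({}^{g'}({}^{g^{-1}}g''))$ unwinds, again using that the $G$-action on $G$ is conjugation, to ${}^g\big(g'({}^{g^{-1}}g'')g'^{-1}\big) = ({}^gg')\,{}^g({}^{g^{-1}}g'')\,({}^gg'^{-1})$. Since conjugation is an action (i.e., $\rho_G$ is a homomorphism $G \to \Aut(G)$), we have ${}^g({}^{g^{-1}}g'') = g''$, and the two sides agree. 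The same argument with $H$ in place of $G$ handles the right half of \eqref{fc2}.

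For \eqref{fc3}, the left half reads $^{(^gg')}h = {}^g({}^{g'}({}^{g^{-1}}h))$ with $g, g' \in G$, $h \in H$. Here the key step is to rewrite, using that $G$ acts on itself by conjugation, $^{(^gg')}h = {}^{(gg'g^{-1})}h$, and then use that $\sigma_G \colon G \to \Aut(H)$ is a homomorphism to get $^{(gg'g^{-1})}h = {}^g({}^{g'}({}^{g^{-1}}h))$, which is exactly the right-hand side. Again the companion identity with the roles of $G$ and $H$ exchanged is symmetric. I expect the main obstacle — really the only subtlety — to be bookkeeping: making sure in each case to use the right fact (the hypothesis of compatibility for \eqref{fc1}, the automatic identities \eqref{fact} for \eqref{fc4}, and the fact that $\rho_G, \rho_H, \sigma_G, \sigma_H$ are homomorphisms for \eqref{fc2} and \eqref{fc3}), and keeping track of which elements lie in $G$ versus $H$ so that "acts by conjugation" is being applied only where it is hypothesized. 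Once the eight cases are laid out as above, each is a one-line verification.
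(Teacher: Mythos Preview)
Your proof is correct and follows essentially the same approach as the paper: both verify the four pairs \eqref{fc1}--\eqref{fc4} case by case, using compatibility for \eqref{fc1}, the self-conjugation hypothesis together with the fact that $\rho_G,\sigma_G$ (resp.\ $\rho_H,\sigma_H$) are homomorphisms for \eqref{fc2} and \eqref{fc3}, and the automatic identities for \eqref{fc4}. The only cosmetic difference is that you invoke the Fact \eqref{fact} directly for \eqref{fc4}, whereas the paper reproves it inline.
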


\begin{proof}
Since the groups act on themselves by conjugation we have $^gg'=gg'g^{-1}$. Hence
$^{(^gg')}h=\,^g\big(^{g'}(^{g^{-1}}h)\big)$.
Now notice that if we replace $g,g'$ with $h,h'$ and $h$ by $g$ then we will obtain
$^{(^hh')}g=\,^h\big(^{h'}(^{h^{-1}}g)\big)$.
Hence \eqref{fc3} holds. Similarly replacing $h$ by $g''$, we obtain
$^{(^gg')}g''=\,^g(^{g'}(^{g^{-1}}g''))$.
Hence \eqref{fc2} holds.
Recall that $^hg\in G$. Since the groups act on themselves by conjugation,
$^{(^hg)}g'=\,(^hg)g'\,(^hg^{-1})=\,^h(g\;^{h^{-1}}g'g^{-1})=\,^h\big(^g(^{h^{-1}}g')\big)$.
 By replacing $g,g'$ with $h,h'$ and $h$ by $g$ in the last equation leads to
$^{(^gh)}h'=\,^g\big(^h(^{g^{-1}}h')\big)$. Thus \eqref{fc4} holds.
 By hypothesis, the mutual actions are compatible,  hence  \eqref{fc1} holds and so
all the eight conditions required for full compatibility are satisfied. Thus a
compatible action is fully compatible.
\end{proof}

 The goal of the next example is to have an action which satisfies all the 8
conditions of full compatibility and one where not all 8 conditions are satisfied
but the product formed is still finite.

\begin{example} Let $G\cong H\cong C_2^2=V_4=\{a,b,ab,e\}$, the Klein-four group.
There are 3 possible non-trivial actions of $G$ on itself.
 We will list these actions and show that one of the three actions satisfies the
full compatibility condition.
 First recall that $\Aut(V_4)\cong S_3$ where $S_3$ is the symmetric group on 3
symbols. Let $a$ and $b$ be the generators of $V_4$.
 Let $\psi_{ab} \colon V_4\rightarrow \Aut(V_4)$ be defined by $\psi_{ab}(a)=(12)$
and $\psi_{ab}(b)=(12)$.
 Similarly define $\psi_{b}$ and $\psi_{a}$ by $\psi_{b}(a)=\psi_{b}(b)=(13)$ and
$\psi_{a}(a)=\psi_{a}(b)=(23)$.
In particular $\psi_{ab}$ gives rise to the automorphism of $V_4$ that fixes $ab$.
Similarly $\psi_{b}$ and $\psi_{a}$ give
rise to automorphisms of $V_4$ that fix $b$ and $a$ respectively. In order to form
the box-tensor product we have to consider four actions:
$G$ acting on itself and on $H$ and $H$ acting on itself and on $G$. Let all these
four actions be defined by $\psi_{b}$.
We will show that this does not give a fully compatible action. For the actions
defined by $\psi_{b}$,
notice that $^{(^aa)}a=\,^{ab}a=a\neq\,^a\big(^a(^{a^{-1}}a)\big)=ab$. Similarly,
all the four actions defined
by $\psi_{a}$ are not fully compatible and this can be seen as follows:
$^{(^bb)}b=\,^{ab}b=b\neq\,^b\big(^b(^{b^{-1}}b)\big)=ab$.

It is easy to check that the four actions defined by $\psi_{ab}$ are fully
compatible. A Gap computation shows that the resulting box-tensor product is
$C_4\times C_2$. We have already seen that the four actions defined by $\psi_{b}$
are not fully compatible. Gap computation shows that the resulting product is
$C_2\times C_2$, a finite group. We have also seen that the four mutual actions
defined by $\psi_{a}$ are not fully compatible. Gap computation shows that the
resulting product is again the Klein four group.

\end{example}

\section{Basic Results}\label{S:3}

In this section we prove some basic results for the box-tensor product. These
results are well known for nonabelian tensor products and follow here as easy
corollaries to results for box-tensor products. Similar to the center of a group $G$
which fixes all elements of the group under conjugation, we can look at something
which we
will call the $G$-center of a group, if $G$ acts on a group $H$ and on itself not
necessarily by conjugation. We make the following definition.

\begin{definition} Let $G$ and $H$ be groups with $G$ acting on $G$ and $H$. \emph{The
$G$-center of $G$ with respect to $H$ }is then defined as $F_H(G)=\{g\in G \mid \;
^ga=a
\;\text{for all}\; a\in G\cup H \}$.
\end{definition}

Let $G$ and $H$ be groups with $H$ acting on $G$. In \cite{MV}, a subgroup,
called the \emph{derivative} of $G$ by $H$, was introduced. It is defined as
$D_H(G)=\left\langle g \ ^hg^{-1}\mid g\in G,h\in H\right\rangle$ and it was shown that
$D_H(G)$ is normal in $G$, provided the mutual actions are compatible. The derivative
of $G$ by $H$ is generated by the
deviations of $^hg$ from $g$. For the box-tensor product we need to introduce
another subgroup generated by the deviations of $^gg'$
from the conjugate of $g'$ by $g$. To that end, we first make a notational
convention. If $G$ is a group and $X$ is a subset of $G$, we let
$X^G=\left\langle ^gx \mid x\in X, g\in G\right\rangle$ be the normal closure of $X$ in
$G$. If $X=\{x\}$, we write $x^G$ for $\{x\}^G$.

\begin{definition}Let $G$ be a group and $\rho \colon G \rightarrow \Aut(G)$. Then the
\emph{deviational subgroup} of $G$ with respect to $\rho$ is defined as
$D_{\rho}(G)=E^G$,
where $E=\{^gg'gg'^{-1}g^{-1}\mid g,g'\in G\}$.
\end{definition}

In case of the nonabelian tensor product where $\rho \colon G\rightarrow \Inn(G)$, we
observe that $D_{\rho}(G)$ is trivial. The next proposition states various
properties of these subgroups.

\begin{prop} Let $G$ and $H$ be groups acting on themselves and on each other. Then
$F_H(G)\lhd G$. If the mutual actions are fully compatible, then
$D_{\rho}(G)$ is a subgroup of $F_H(G)$. In particular, $D_{\rho}(G)$ acts trivially
on $G$ and $H$. Furthermore, there exists $v,w\in D_{\rho}(G)$ such that
\begin{equation}\label{E:2.3.1}
^gg'=gg'g^{-1}w=vgg'g^{-1}
\end{equation}
for $g,g'\in G$.
\end{prop}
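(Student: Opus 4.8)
The plan is to prove the three assertions in turn: first $F_H(G)\lhd G$, then $D_\rho(G)\leq F_H(G)$, and finally the factorization \eqref{E:2.3.1}. The normality of $F_H(G)$ should follow directly from the definitions. If $x\in F_H(G)$ and $g\in G$, I want to show $^gx g^{-1}$ fixes every $a\in G\cup H$ under its action. Writing out $^{(^gxg^{-1})}a$ and using the relevant full-compatibility instances — which for $a\in G$ is \eqref{fc2} (or the version where one argument is in $H$, namely \eqref{fc3}) and its consequences — one rewrites $^{(^gxg^{-1})}a$ as a triple conjugate involving $^xg^{-1}a$; since $x$ acts trivially, $^x(^{g^{-1}}a)={}^{g^{-1}}a$, and the outer actions cancel, leaving $a$. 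So $^gxg^{-1}\in F_H(G)$, giving normality. (One also needs closure of $F_H(G)$ under products and inverses, which is immediate.)

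Next, for $D_\rho(G)\leq F_H(G)$: it suffices to show each generator $e={}^gg'\,g\,g'^{-1}\,g^{-1}$ of $E$ lies in $F_H(G)$, since $F_H(G)$ is normal (just proved) and hence contains the normal closure $E^G=D_\rho(G)$. To see $e\in F_H(G)$, I compute $^e a$ for arbitrary $a\in G\cup H$. The key point is that $^{(^gg')}a ={}^g\big({}^{g'}({}^{g^{-1}}a)\big)$, which is exactly \eqref{fc2} when $a\in G$ and \eqref{fc3} when $a\in H$ — both of which hold under full compatibility. Meanwhile $^{(gg'g^{-1})}a = {}^g\big({}^{g'}({}^{g^{-1}}a)\big)$ as well, since this is just composition of the automorphisms $\rho(g),\rho(g'),\rho(g)^{-1}$... wait, more carefully: $^{(gg'g^{-1})}a$ needs the action of the element $gg'g^{-1}\in G$, and I must be careful that $\rho$ need not be a homomorphism in a way that trivializes this; but $\rho$ IS a homomorphism $G\to\Aut(G)$ (resp. into $\Aut(H)$), so $\rho(gg'g^{-1})=\rho(g)\rho(g')\rho(g)^{-1}$, giving $^{(gg'g^{-1})}a={}^g({}^{g'}({}^{g^{-1}}a))$. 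Comparing, $^{(^gg')}a = {}^{(gg'g^{-1})}a$, so $^e a = {}^{(^gg')( g g'^{-1} g^{-1})}a = {}^{(^gg')}\big({}^{(gg'^{-1}g^{-1})}a\big)$, and since $^{(^gg')} = {}^{(gg'g^{-1})}$ as automorphisms, this collapses to $^{(gg'g^{-1})(gg'^{-1}g^{-1})}a = {}^{e'}a$ where $e'$ is... hmm, I should instead directly observe $e = {}^gg' \cdot (gg'g^{-1})^{-1}$, and since left-multiplication actions by $^gg'$ and $gg'g^{-1}$ agree on $G\cup H$, their "difference" $e$ acts trivially. This is the heart of the argument and the step I expect to be most delicate — pinning down exactly which of the eight conditions \eqref{fc1}–\eqref{fc4} is invoked and confirming that $\rho$ being a genuine homomorphism is what makes $^{(gg'g^{-1})}a$ decompose correctly.

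Finally, for \eqref{E:2.3.1}: set $w = (g g' g^{-1})^{-1}\,({}^g g')$. By the computation above, $w$ acts trivially on $G\cup H$, and moreover $w$ is a product of generators of $E$ up to normal-closure manipulation, so $w\in D_\rho(G)$; then $^gg' = g g' g^{-1} w$ is exactly the right-hand factorization. For the left-hand one, set $v = ({}^gg')\,(gg'g^{-1})^{-1}$; the same reasoning gives $v\in D_\rho(G)$ and $^gg' = v\, gg'g^{-1}$. One should double-check that $v,w$ genuinely lie in the deviational subgroup and not merely in $F_H(G)$: $w = g'^{-1}\cdot g^{-1}({}^gg')$, hmm — better, note $e={}^gg'\cdot g\cdot g'^{-1}\cdot g^{-1}$ is a generator of $E$, and a short rearrangement expresses $v$ and $w$ as conjugates/products of such generators, hence they lie in $E^G = D_\rho(G)$. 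I would present this bookkeeping explicitly but briefly. The main obstacle throughout is keeping the noncommutativity and the nontrivial self-actions straight; once the identity $^{(^gg')}a = {}^{(gg'g^{-1})}a$ is established, everything else is formal.
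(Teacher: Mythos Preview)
Your approach to the second and third assertions is essentially the paper's: the key identity $^{(^gg')}a={}^{(gg'g^{-1})}a$ (from full compatibility on the left and the homomorphism property of $\rho_G,\sigma_G$ on the right) shows each generator $e={}^gg'\,gg'^{-1}g^{-1}$ of $E$ acts trivially on $G\cup H$, and then ${}^gg'\equiv gg'g^{-1}\pmod{D_\rho(G)}$ gives \eqref{E:2.3.1}. Your worry about whether $v,w$ actually lie in $D_\rho(G)$ is unfounded: $v=({}^gg')(gg'g^{-1})^{-1}$ is literally the generator $e\in E$, and $w=(gg'g^{-1})^{-1}({}^gg')$ is the conjugate $(gg'g^{-1})^{-1}\,e\,(gg'g^{-1})\in E^G=D_\rho(G)$, so no extra bookkeeping is needed.

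There is, however, a genuine issue with your treatment of the first assertion. The proposition asserts $F_H(G)\lhd G$ \emph{before} assuming full compatibility, so you cannot invoke \eqref{fc2} or \eqref{fc3} for that part. The paper's argument is a one-liner that avoids this: simply observe that $F_H(G)=\ker\rho_G\cap\ker\sigma_G$, an intersection of kernels of group homomorphisms, hence normal. Even if you prefer to verify conjugation-invariance directly, you should write $gxg^{-1}$ (ordinary conjugation, not ${}^gx$) and use only that $\rho_G,\sigma_G$ are homomorphisms to get $^{(gxg^{-1})}a={}^g\big({}^x({}^{g^{-1}}a)\big)={}^g({}^{g^{-1}}a)=a$; full compatibility plays no role here.
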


\begin{proof} Notice that $F_H(G)=\ker \sigma_G \cap \ker \rho_G$. Thus $F_H(G)\lhd
G$. To show that $D_{\rho}(G)$ is contained in $F_H(G)$, consider
$^gg'gg'^{-1}g^{-1}\in D_{\rho}(G)$. Using compatibility condition, we obtain
$^{^gg'gg'^{-1}g^{-1}}a=\;^{(^gg')}(^{gg'^{-1}g^{-1}}a)=\;^{gg'g^{-1}}(^{gg'^{-1}g^{-1}}a)=a$.
Thus $D_{\rho}(G)$ is a
subgroup of $F_H(G)$. The last part of our claim follows from observing that
$^gg'\equiv gg'g^{-1} \pmod{D_{\rho}(G)}$.
\end{proof}

It is easy to check that we have actions of $G$ and $H$ on $G\boxtimes H$ given by
$^x(g\boxtimes h)=\,^xg\boxtimes \,^xh$ for all $x\in G$ or $H$. The next proposition
generalizes \cite[Proposition 3]{BJR} to the box-tensor product.

\begin{prop}\label{P:2.4}
Let $G$ and $H$ be groups acting on each other and on themselves. If the actions are
fully compatible, then the following relations hold for all $g, g'\in G$, $h, h'\in H$.
\begin{align}
 ^g(g^{-1}\boxtimes h)=(g\boxtimes h)^{-1}=\;^h(g\boxtimes h^{-1}) &;
\label{E:2.4.1} \\
 (g\boxtimes h)(g'\boxtimes h')(g\boxtimes
h)^{-1}=\;^{[g,h]}g'\boxtimes\;^{[g,h]}h'& ; \label{E:2.4.2}\\
 (g\;^hg^{-1})\boxtimes h'=\;^{gh}(g^{-1}\boxtimes v)(g\boxtimes h)\;^{h'}(g\boxtimes
h)^{-1} & \quad
\mbox{for some }v\in D_{\rho}(H); \label{E:2.4.3} \\
g'\boxtimes (^ghh^{-1})=\;^{g'g}(w\boxtimes h)\;^{g'}(g\boxtimes h)(g\boxtimes
h)^{-1}  & \quad
\mbox{for some }w\in D_{\rho}(G); \label{E:2.4.4} \\
(g\;^hg^{-1})\boxtimes (^{g'}h'h'^{-1})=\;^{gh}(g^{-1}\boxtimes v)[g\boxtimes
h,g'\boxtimes h'] &  \quad
\mbox{for some }v\in D_{\rho}(H); \label{E:2.4.5} \\
(g\;^hg^{-1})\boxtimes (^{g'}h'h'^{-1})=\;^{g\;^hg^{-1}g'}(w\boxtimes h)[g\boxtimes
h,g'\boxtimes h']& \quad
\mbox{for some }w\in D_{\rho}(G). \label{E:2.4.6}
\end{align}
\end{prop}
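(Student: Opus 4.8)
The plan is to prove the six identities in turn, using only the defining relations \eqref{E:1.2.1}, \eqref{E:1.2.2}, the action formula ${}^{x}(a\boxtimes b)={}^{x}a\boxtimes{}^{x}b$ for $x\in G\cup H$ recorded just above, relation \eqref{E:2.3.1}, and the fact from the preceding proposition that $D_{\rho}(G)$ (and, symmetrically, $D_{\rho}(H)$) acts trivially on both $G$ and $H$. Two degenerate cases come first: putting $g=g'=e$ in \eqref{E:1.2.1} gives $e\boxtimes h=(e\boxtimes h)^{2}$, so $e\boxtimes h=1$, and symmetrically $g\boxtimes e=1$. Then \eqref{E:2.4.1} is immediate: taking $g'=g^{-1}$ in \eqref{E:1.2.1} yields $1=e\boxtimes h=({}^{g}g^{-1}\boxtimes{}^{g}h)(g\boxtimes h)$, whence $(g\boxtimes h)^{-1}={}^{g}g^{-1}\boxtimes{}^{g}h={}^{g}(g^{-1}\boxtimes h)$, and taking $h'=h^{-1}$ in \eqref{E:1.2.2} gives the other equality as well as the useful by-product $(g\boxtimes h)^{-1}={}^{h}g\boxtimes{}^{h}h^{-1}$.

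The substantial step is \eqref{E:2.4.2}, which I expect to be the main obstacle. The strategy is to expand the conjugate $(g\boxtimes h)(g'\boxtimes h')(g\boxtimes h)^{-1}$ by successive applications of \eqref{E:1.2.1}, \eqref{E:1.2.2} and \eqref{E:2.4.1}, arranged so that the middle generator is pushed to the outside, and then to collapse every nested action of the shape ${}^{({}^{g}h)}({}^{g^{-1}}x)$ or ${}^{({}^{h}g)}({}^{h^{-1}}y)$ that appears by invoking the full-compatibility conditions \eqref{fc1}--\eqref{fc4}. Because the self-actions need not be conjugation, each replacement of ${}^{a}b$ by $aba^{-1}$ costs a factor of $D_{\rho}(G)$ or $D_{\rho}(H)$ via \eqref{E:2.3.1}; the point to check is that for this identity all such corrections are absorbed into the exponent of the action that finally emerges, where they are invisible since $D_{\rho}(G)$ and $D_{\rho}(H)$ act trivially, so that the net result is precisely ${}^{[g,h]}g'\boxtimes{}^{[g,h]}h'$ with $[g,h]=g\,{}^{h}g^{-1}\in G$. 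Carrying out the same computation with the two defining relations (and the roles of $G$ and $H$) interchanged shows, in addition, that conjugation by $g\boxtimes h$ is equally given by the action of ${}^{g}h\,h^{-1}\in H$; this costs nothing more and will be used below. (One could instead read \eqref{E:2.4.2} as the defining identity of the crossed module $G\boxtimes H\to G$, $g\boxtimes h\mapsto g\,{}^{h}g^{-1}$, but proving that map well defined requires the same compatibility bookkeeping, so this does not sidestep the work.)

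Granting \eqref{E:2.4.2}, the remaining identities are essentially formal. For \eqref{E:2.4.3} I would expand $(g\,{}^{h}g^{-1})\boxtimes h'$ using \eqref{E:1.2.1}, \eqref{E:1.2.2} and \eqref{E:2.4.1}, applying \eqref{E:2.3.1} to trade the self-action terms for genuine conjugates; this time one discrepancy survives, an element $v\in D_{\rho}(H)$, and transporting it back out reassembles it as the prefactor ${}^{gh}(g^{-1}\boxtimes v)$. Identity \eqref{E:2.4.4} is the mirror computation, beginning from $g'\boxtimes({}^{g}h\,h^{-1})$ and \eqref{E:1.2.2}, with a correction $w\in D_{\rho}(G)$. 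Finally, substituting the element ${}^{g'}h'h'^{-1}$ of $H$ for the free second variable in \eqref{E:2.4.3} and using \eqref{E:2.4.2} in the form $(g'\boxtimes h')\,s\,(g'\boxtimes h')^{-1}={}^{{}^{g'}h'h'^{-1}}s$ collapses the trailing factor $(g\boxtimes h)\,{}^{{}^{g'}h'h'^{-1}}(g\boxtimes h)^{-1}$ into the commutator $[g\boxtimes h,g'\boxtimes h']$, which gives \eqref{E:2.4.5}; \eqref{E:2.4.6} follows the same way from \eqref{E:2.4.4} and \eqref{E:2.4.2}, after the resulting correction is absorbed into a (possibly different) element of $D_{\rho}(G)$.
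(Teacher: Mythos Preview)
Your treatment of \eqref{E:2.4.1} and of \eqref{E:2.4.3}--\eqref{E:2.4.6} is essentially the paper's: substitute into the defining relations, use \eqref{E:2.3.1} to replace ${}^{g}({}^{h}g^{-1})$ by $g\,{}^{h}g^{-1}\,g^{-1}$ modulo $D_{\rho}$, and for \eqref{E:2.4.5}--\eqref{E:2.4.6} specialize the free variable in \eqref{E:2.4.3}--\eqref{E:2.4.4} and invoke \eqref{E:2.4.2}. That part is fine.

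The gap is your approach to \eqref{E:2.4.2}. You propose to ``expand the conjugate $(g\boxtimes h)(g'\boxtimes h')(g\boxtimes h)^{-1}$'' directly, pushing the middle factor outward and absorbing the resulting $D_{\rho}$-corrections. But the defining relations give you no way to multiply two adjacent generators unless they share a slot, so there is no direct expansion of such a triple product; your sketch does not describe a concrete computation, and the expectation that $D_{\rho}$-terms will appear and then miraculously cancel is a symptom of this. The paper's argument is a different and much cleaner manoeuvre: expand $gg'\boxtimes hh'$ in two ways, first \eqref{E:1.2.1} then \eqref{E:1.2.2}, and then \eqref{E:1.2.2} then \eqref{E:1.2.1}. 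Using full compatibility to simplify exponents of the form ${}^{({}^{g}h)}({}^{g}x)$ to ${}^{gh}x$ (and similarly ${}^{({}^{h}g)}({}^{h}y)={}^{hg}y$), one obtains
\[
({}^{gh}g'\boxtimes{}^{gh}h')(g\boxtimes h)=(g\boxtimes h)({}^{hg}g'\boxtimes{}^{hg}h'),
\]
and a substitution then gives \eqref{E:2.4.2}. Note that \eqref{E:2.3.1} is never used here and no $D_{\rho}$-terms ever arise; \eqref{E:2.4.2} holds exactly, not merely up to correction factors. You should replace your proposed direct attack on \eqref{E:2.4.2} with this two-way expansion.
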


\begin{proof}
To prove \eqref{E:2.4.1}, expansion using \eqref{E:1.2.1} yields
$1_{\boxtimes}=gg^{-1}\boxtimes h=\;^g(g^{-1}\boxtimes h)(g\boxtimes h)$. Similarly
using \eqref{E:1.2.2}, we obtain $1_{\boxtimes}=g\boxtimes hh^{-1}=(g\boxtimes
h)\;^h(g\boxtimes h^{-1})$, the desired result.

\

To obtain \eqref{E:2.4.2}, we expand $gg'\boxtimes hh$ in two different ways. First
we expand $gg'\boxtimes hh$ using \eqref{E:1.2.1} and then  \eqref{E:1.2.2} on the
resulting product. This together with the compatibility condition yields
$gg'\boxtimes hh'=(^gg'\boxtimes\;^gh)(^{gh}g'\boxtimes\;^{gh}h')(g\boxtimes
h)(^hg\boxtimes\;^hh')$. Similarly, applying \eqref{E:1.2.2} first and then
\eqref{E:1.2.1} yields $gg'\boxtimes hh'=(^gg'\boxtimes\; ^gh)(g\boxtimes
h)(^{hg}g'\boxtimes\;^{hg}h')(^hg\boxtimes\;^hh')$. Equating the right sides of both
equations and cancelling leads to
\[(^{gh}g'\boxtimes\;^{gh}h')(g\boxtimes h)= (g\boxtimes
h)(^{hg}g'\boxtimes\;^{hg}h')\,.\]
Left multiplication of the above by $(g\boxtimes h)^{-1}$ leads to
\[(g\boxtimes h)(^{hg}g'\boxtimes\;^{hg}h')(g\boxtimes
h)^{-1}=\;^{gh}g'\boxtimes\;^{gh}h'\,.\]
Finally, setting $^{hg}g'$ for $g'$ and $^{hg}h'$ for $h'$ gives the desired
result.

\

Next we prove \eqref{E:2.4.3}. Expanding by \eqref{E:1.2.1} and using
\eqref{E:2.3.1} yield
\begin{equation}\label{E:2.4.7}
g\;^hg^{-1}\boxtimes h'=\;^{gh}(g^{-1}\boxtimes vh^{-1}h'h)(g\boxtimes h')
\end{equation}
for some $v\in D_{\rho}(H)$. Setting $x=g^{-1}\boxtimes vh^{-1}h'h$ and expanding
$x$ by using \eqref{E:1.2.2} twice together with the fact that the elements of
$D_{\rho}(H)$ act trivially on both $H$ and $G$, we obtain $x=(g^{-1}\boxtimes
v)(g^{-1}\boxtimes h^{-1}h'h)$. Substituting this expression for $x$ into
\eqref{E:2.4.7} yields
\begin{equation}\label{E:2.4.8}
g\;^hg^{-1}\boxtimes h'=\;^{gh}(g^{-1}\boxtimes v)\;^{gh}(g^{-1}\boxtimes
h^{-1}h'h)(g\boxtimes h').
\end{equation}

Expanding $g^{-1}\boxtimes h^{-1}h'h$ using \eqref{E:1.2.2} and substituting this
expression into \eqref{E:2.4.8} leads to
\begin{equation}\label{E:2.4.9}
g\;^hg^{-1}\boxtimes h'=\,^{gh}(g^{-1}\boxtimes v)\;^{gh}(g^{-1}\boxtimes
h^{-1})\;^g(g^{-1}\boxtimes h'h)(g\boxtimes h').
\end{equation}

By expanding $g^{-1}\boxtimes h'h$ in \eqref{E:2.4.9} using \eqref{E:1.2.2} and
then applying \eqref{E:2.4.1}, we obtain
\begin{equation}\label{E:2.4.10}
g\;^hg^{-1}\boxtimes h'=\;^{gh}(g^{-1}\boxtimes v)\;^{gh}(g^{-1}\boxtimes
h^{-1})(g\boxtimes h')^{-1}\;^{gh'}(g^{-1}\boxtimes h)(g\boxtimes h').
\end{equation}

Note that by \eqref{E:2.4.2} and \eqref{E:2.4.1} we obtain $(g\boxtimes
h')^{-1}\;^{gh'}(g^{-1}\boxtimes h)(g\boxtimes h')=\;^{h'}(g\boxtimes h)^{-1}$.
Similarly, \eqref{E:2.4.1} yields $^{gh}(g^{-1}\boxtimes h^{-1})=\;^g(g^{-1}\boxtimes
h)^{-1}=g\boxtimes h$. Substituting these two expressions into \eqref{E:2.4.10}
yields the desired result.

To prove \eqref{E:2.4.4}, we proceed in a similar manner as in the previous case.
The goal is to expand the left side of \eqref{E:2.4.4} such that the same factors
appear as in the corresponding nonabelian tensor version of \eqref{E:2.4.4} plus a
correction factor involving an entry from $D_{\rho}(G)$. Expanding  with the help of
\eqref{E:1.2.2} and \eqref{E:2.3.1}, we obtain
\[g'\boxtimes\, ^ghh^{-1}=\;^g(^{g^{-1}}g'\boxtimes h)\;^{(^gh)}(g'\boxtimes
h^{-1})=\;^g(g^{-1}g'gw\boxtimes h)\;^{(^gh)}(g'\boxtimes h^{-1})\,,\]
for some $w\in D_{\rho}(G)$. Now expanding the first factor of the right
side of the
above equation with the help of \eqref{E:1.2.1} and using the compatibility
condition together with \eqref{E:2.4.1} for the second term, we arrive at
\begin{equation}\label{E:2.4.11}
g'\boxtimes\, ^ghh^{-1}=(g'gw\boxtimes h)\;^{g}(g^{-1}\boxtimes
h)\;^{[g,h]}(g'\boxtimes h)^{-1},
\end{equation}
for some $w\in D_{\rho}(G)$. Expanding the first factor on the right hand side of
\eqref{E:2.4.11} by using \eqref{E:1.2.1} gives $(g'gw\boxtimes
h)=\;^{g'g}(w\boxtimes h)\;^{g'}(g\boxtimes
h)(g'\boxtimes h)$. Using \eqref{E:2.4.1} on the second factor of \eqref{E:2.4.11}
and using \eqref{E:2.4.2} on the third factor together with cancellation leads to the
desired result.

\

Finally, we turn to the proof of \eqref{E:2.4.5} and \eqref{E:2.4.6}. First using
\eqref{E:2.4.3} and then observing that $^{(^{g'}h'{h'}^{-1})}(g\boxtimes
h)^{-1}=\;^{[g',h']}(g\boxtimes h)^{-1}$ by the compatibility condition yields

\[g\;^hg^{-1}\boxtimes\; ^{g'}h'{h'}^{-1}=\;^{gh}(g^{-1}\boxtimes v)(g\boxtimes
h)\;^{[g',h']}(g\boxtimes h)^{-1}\,.\]

By applying \eqref{E:2.4.2} to the last factor of the right hand side of the above
equation we obtain
\[g \ ^hg^{-1}\boxtimes\; ^{g'}h'{h'}^{-1}=\;^{gh}(g^{-1}\boxtimes v)(g\boxtimes
h)(g'\boxtimes h')(g\boxtimes h)^{-1}(g'\boxtimes h')^{-1}\,,\]
which is \eqref{E:2.4.5}. Similarly using \eqref{E:2.4.4} yields \eqref{E:2.4.6}.
\end{proof}

Observing that $D_{\rho}(G)$ and $D_{\rho}(H)$ are trivial for the nonabelian tensor
product, we obtain \cite[Proposition 3]{BJR} as a corollary.

\begin{corollary}\label{C:2.5} Let $G$ and $H$ be groups acting on each other and
acting on themselves by conjugation. If the actions are compatible, then the
following relations hold for all $g, g'\in G$, $h, h'\in H$.
\begin{align}
 ^g(g^{-1}\otimes h)\ = \ &(g\otimes h)^{-1}\ = \ ^h(g\otimes h^{-1});
\label{E:2.5.1}\\
 (g\otimes h)(g'\otimes h')(g\otimes h)^{-1}\ = \ & \;^{[g,h]}g'\otimes\;^{[g,h]}h';
\label{E:2.5.2}\\
 (g\;^hg^{-1})\otimes h'\ = \ & (g\otimes h)\;^{h'}(g\otimes h)^{-1};
\label{E:2.5.3} \\
 g'\otimes (^ghh^{-1})\ = \ & \;^{g'}(g\otimes h)(g\otimes h)^{-1};
\label{E:2.5.4} \\
 (g \  ^hg^{-1})\otimes (^{g'}h'h'^{-1})\ = \ &[g\otimes h,g'\otimes h'].
\label{E:2.5.5}
\end{align}
\end{corollary}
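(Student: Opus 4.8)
The plan is to obtain Corollary~\ref{C:2.5} as the specialization of Proposition~\ref{P:2.4} to conjugation self-actions, where the deviational subgroups collapse. First I would note that the hypothesis of Corollary~\ref{C:2.5}---that $G$ and $H$ act on themselves by conjugation and that the mutual actions are compatible in the sense of Definition~\ref{D:1.3}---places us exactly in the setting of the Lemma of Section~\ref{S:2}, so the actions are in fact fully compatible and Proposition~\ref{P:2.4} applies. Moreover, under conjugation self-actions the defining relations \eqref{E:1.2.1} and \eqref{E:1.2.2} of $G\boxtimes H$ are precisely the defining relations of the nonabelian tensor product $G\otimes H$ of \cite{BL1} and \cite{BL2}; hence $G\boxtimes H$ and $G\otimes H$ coincide and every symbol $g\boxtimes h$ may be read as $g\otimes h$ throughout.

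The key step is to check that all the correction factors appearing in Proposition~\ref{P:2.4} disappear. Since $\rho_G(g)$ is conjugation by $g$, for $g,g'\in G$ we have ${}^gg'\,gg'^{-1}g^{-1}=(gg'g^{-1})(gg'^{-1}g^{-1})=1$, so the generating set $E$ of $D_{\rho}(G)$ is trivial and therefore $D_{\rho}(G)=E^G$ is trivial; the same computation gives $D_{\rho}(H)=1$ (this is the observation already recorded after the definition of the deviational subgroup). Consequently every $v\in D_{\rho}(H)$ and every $w\in D_{\rho}(G)$ occurring in \eqref{E:2.4.3}--\eqref{E:2.4.6} is the identity. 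It also helps to record that $g\boxtimes 1_H=1_{\boxtimes}=1_G\boxtimes h$, which follows by putting $h=h'=1_H$ in \eqref{E:1.2.2} and $g=g'=1_G$ in \eqref{E:1.2.1}; thus the leading factors ${}^{gh}(g^{-1}\boxtimes v)$ in \eqref{E:2.4.3} and \eqref{E:2.4.5}, and ${}^{g'g}(w\boxtimes h)$ and ${}^{g\,{}^hg^{-1}g'}(w\boxtimes h)$ in \eqref{E:2.4.4} and \eqref{E:2.4.6}, all collapse to $1_{\boxtimes}$.

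With these reductions the five relations of the corollary are immediate. Rewriting $\boxtimes$ as $\otimes$, relation \eqref{E:2.4.1} is \eqref{E:2.5.1} and \eqref{E:2.4.2} is \eqref{E:2.5.2} verbatim; substituting $v=1_H$ in \eqref{E:2.4.3} yields \eqref{E:2.5.3}; substituting $w=1_G$ in \eqref{E:2.4.4} yields \eqref{E:2.5.4}; and substituting $v=1_H$ in \eqref{E:2.4.5}, or equivalently $w=1_G$ in \eqref{E:2.4.6}, yields \eqref{E:2.5.5}. Since \eqref{E:2.5.1}--\eqref{E:2.5.5} are exactly the relations of \cite[Proposition 3]{BJR}, this simultaneously recovers that result as promised.

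I do not expect a genuine obstacle here: the entire content is the vanishing $D_{\rho}(G)=D_{\rho}(H)=1$. The only point deserving a moment of care is bookkeeping --- one must confirm that in each of \eqref{E:2.4.3}--\eqref{E:2.4.6} the $D_{\rho}$-entry always sits inside a box-tensor symbol whose other coordinate is an identity element (so that it vanishes), as opposed to occupying a position such as a conjugating exponent $[g,h]$ or $[g',h']$ where it would survive. Once this is verified, the passage from Proposition~\ref{P:2.4} to Corollary~\ref{C:2.5} is purely formal.
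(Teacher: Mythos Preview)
Your proposal is correct and follows exactly the paper's approach: the paper simply remarks that $D_{\rho}(G)$ and $D_{\rho}(H)$ are trivial when the self-actions are by conjugation, so Proposition~\ref{P:2.4} specializes to \cite[Proposition 3]{BJR}. One small slip in your final paragraph: in the correction factors such as ${}^{gh}(g^{-1}\boxtimes v)$ and ${}^{g'g}(w\boxtimes h)$ the \emph{other} coordinate is $g^{-1}$ or $h$, not the identity---the point (which you stated correctly earlier) is that the $D_\rho$-entry itself is the identity, so the tensor symbol has an identity in one slot and hence equals $1_{\boxtimes}$.
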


The following well-known concept of a crossed module can be found in \cite{CW}. There
it appears in relation with the third cohomology group.

\begin{definition} Let $A$ and $B$ be groups. A \emph{crossed module} is a group
homomorphism $\phi \colon A\rightarrow B$ together with an action of $B$ on $A$
satisfying
\[\phi(^ba)=b\phi(a)b^{-1} \qquad \text{and} \qquad  ^{\phi(a)}a'=aa'a^{-1}\,,\] for
all $b\in B$
and $a,a'\in A$.
\end{definition}

In \cite{BL2}, it is shown that the mapping $\phi \colon G\otimes H\rightarrow G$
defined
by $\phi(g\otimes h)=g \; ^hg^{-1}$ is a crossed module. We cannot do the same for the
box-tensor product. Recall that $D_{\rho}(G)$ acts trivially on $G$ and $H$, and
hence there exists an induced action of $G/D_{\rho}(G)$ on $G$ and $H$. Similarly for
$D_{\rho}(H)$. Setting $G/D_{\rho}(G)=\overline{G}$ and
$H/D_{\rho}(H)=\overline{H}$, we obtain the following result for box-tensor product.

\begin{prop}\label{P:2.8} Let $\phi \colon G\boxtimes H \rightarrow \overline{G}$ be
defined by $\phi(g\boxtimes h)=g \; ^h g^{-1}D_{\rho}(G)$. Then the following hold:
\begin{itemize}
  \item[(i)] $\phi$ is a homomorphism;
 \item[(ii)] there is an action of $\overline{G}$ on $G\boxtimes H$ defined by
$^x(g\boxtimes h)=\;^x g\boxtimes\;^x h$, where $x\in \overline{G}$;
 \item[(iii)] $\phi \colon G\boxtimes H \rightarrow \overline{G}$ is a crossed module.
\end{itemize}
\end{prop}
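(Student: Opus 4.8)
The plan is to verify the three items of Proposition~\ref{P:2.8} in order, treating (i) and (ii) as warm-ups and focusing the real work on the two crossed module identities in (iii). For (i), I would first check that $\phi$ is well defined: the defining relations \eqref{E:1.2.1} and \eqref{E:1.2.2} must be respected by the assignment $g\boxtimes h\mapsto g\,{}^hg^{-1}D_\rho(G)$, and here I would use \eqref{E:2.3.1} to rewrite ${}^gg'$ modulo $D_\rho(G)$ as an honest conjugate $gg'g^{-1}$, so that the images of both sides of each relation can be computed as products of commutator-type elements in $\overline G$; the compatibility conditions (in the guise of \eqref{fact}/\eqref{fc4}) are what make the two sides agree. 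Once well-definedness is in hand, the homomorphism property is automatic since $\phi$ is defined on generators. For (ii), I would invoke the already-noted action $^x(g\boxtimes h)={}^xg\boxtimes{}^xh$ of $G$ (and $H$) on $G\boxtimes H$, and observe that by the Proposition preceding \eqref{E:2.3.1} the subgroup $D_\rho(G)$ acts trivially on both $G$ and $H$, hence trivially on every generator $g\boxtimes h$, so the $G$-action descends to a $\overline G=G/D_\rho(G)$-action.

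For (iii) I must check the two crossed module axioms: the equivariance identity $\phi({}^x(g\boxtimes h))=x\,\phi(g\boxtimes h)\,x^{-1}$ for $x\in\overline G$, and the Peiffer identity ${}^{\phi(g\boxtimes h)}(g'\boxtimes h')=(g\boxtimes h)(g'\boxtimes h')(g\boxtimes h)^{-1}$. For equivariance, lifting $x$ to $\bar g\in G$, the left side is $({}^{\bar g}g)\,{}^{\,{}^{\bar g}h}({}^{\bar g}g)^{-1}D_\rho(G)$, and I would show this equals $\bar g\,(g\,{}^hg^{-1})\,\bar g^{-1}D_\rho(G)$ by applying the compatibility conditions to move the outer $\bar g$-action across, modulo $D_\rho(G)$; here \eqref{E:2.3.1} again lets me replace $\rho_G$-actions by conjugations so the computation becomes a manipulation of conjugates in $G/D_\rho(G)$. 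For the Peiffer identity, the key input is \eqref{E:2.4.2}, which already gives $(g\boxtimes h)(g'\boxtimes h')(g\boxtimes h)^{-1}={}^{[g,h]}g'\boxtimes{}^{[g,h]}h'$; so I need to identify ${}^{\phi(g\boxtimes h)}(g'\boxtimes h')$ with ${}^{[g,h]}g'\boxtimes{}^{[g,h]}h'$. Since $\phi(g\boxtimes h)=g\,{}^hg^{-1}D_\rho(G)$ and $[g,h]=g\,{}^hg^{-1}$ (using that $H$ acts so that $h$ "is" conjugation in the relevant sense, via ${}^hg$), these two elements of $\overline G$ coincide, and because $D_\rho(G)$ acts trivially the action through $\phi(g\boxtimes h)$ agrees with the action through $[g,h]$. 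Then \eqref{E:2.4.2} closes the argument.

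The main obstacle I anticipate is bookkeeping the $D_\rho(G)$-cosets carefully, especially in the equivariance check: one must be sure that every place a $\rho_G$-action is silently replaced by a conjugation (via \eqref{E:2.3.1}) is legitimate, i.e. that the discrepancy lands in $D_\rho(G)$ and that $D_\rho(G)$ is normal so the coset arithmetic in $\overline G$ makes sense. Normality of $D_\rho(G)$ follows from the preceding Proposition (it sits inside $F_H(G)=\ker\sigma_G\cap\ker\rho_G\lhd G$), and triviality of its action is exactly what makes the descended $\overline G$-action on $G\boxtimes H$ interact correctly with $\phi$. Once these two facts are used consistently, both axioms reduce to identities already established in Proposition~\ref{P:2.4}, so I expect the proof to be short modulo this cosetwise care; I would write it in three short paragraphs, one per item, flagging the use of \eqref{E:2.3.1}, \eqref{E:2.4.1}, and \eqref{E:2.4.2} at the points where they enter.
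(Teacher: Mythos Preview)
Your plan is correct and essentially identical to the paper's: (i) is verified by checking the two defining relations via compatibility and \eqref{E:2.3.1}, (ii) is the descent of the already-established $G$-action through the trivially acting $D_\rho(G)$ (the paper instead re-verifies that the action respects the box-tensor relations, but your route is equivalent), and (iii) is done exactly as you say, using compatibility plus \eqref{E:2.3.1} for equivariance and \eqref{E:2.4.2} plus compatibility for the Peiffer identity. One small correction on the Peiffer step: $[g,h]=ghg^{-1}h^{-1}$ is a word in $G\ast H$, not the element $g\,{}^hg^{-1}\in G$, so they are not literally equal in $\overline G$; the actual point (and what the paper invokes) is that full compatibility, specifically \eqref{fc4} and its $H$-analogue, forces the $G\ast H$-action of $[g,h]$ on $G$ and $H$ to agree with the $\rho_G/\sigma_G$-action of $g\,{}^hg^{-1}$, after which \eqref{E:2.4.2} finishes the argument --- and note that \eqref{E:2.4.1} is not needed anywhere.
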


\begin{proof} To prove (i), we have to show that
\[\phi(gg'\boxtimes
h)=\phi(^{g}g'\boxtimes\; ^{g}h)\phi(g\boxtimes h)\quad \text{and} \quad
\phi(g\boxtimes hh')=\phi(g\boxtimes h)\phi(^{h}g\boxtimes\; ^{h}h')\,.\]
By the definition of $\phi$ it follows that $\phi(gg'\boxtimes h)=gg'\; ^h g'^{-1}\;
^h g^{-1}D_{\rho}(G)$. Using the compatibility condition yields

\[\phi(^{g}g'\boxtimes \;^{g}h)\phi(g\boxtimes h)= \;^{g}g'\;^{gh}g'^{-1}\;g
\;^hg^{-1}D_{\rho}(G)=\;^{g}(g'\;^{h}g'^{-1})\;g \;^hg^{-1}D_{\rho}(G)\,.\]
Now using \eqref{E:2.3.1} gives $\phi(^{g}g'\boxtimes \;^{g}h)\phi(g\boxtimes
h)=g(g'\;^{h}g'^{-1}) \;^hg^{-1}D_{\rho}(G)$, the desired result.

The second equality $\phi(g\boxtimes hh')=g\; ^{hh'}g^{-1}D_{\rho}(G)$ follows in a
similar manner.

Turning to (ii), we have to show that \[^x(gg')\boxtimes\,^xh=(^{xg}g'\boxtimes
\,^{xg}h)(^xg\boxtimes\,^xh)\quad \text{and} \quad
^xg\boxtimes\,^x(hh')=(^xg\boxtimes\,^xh)(^{xh}g\boxtimes\, ^{xh}h')\,,\]where $x\in
\overline{G}$. By \eqref{E:1.2.1} and the compatibility condition we obtain
\[^x(gg')\boxtimes\,^xh=(^{(^xgx)}g'\boxtimes\,^{(^xgx)}h)(^xg\boxtimes
\,^xh)=(^{xg}g'\boxtimes\,^{xg}h)(^xg\boxtimes\,^xh)\,.\]
 The second equality follows
in a similar manner.

\

To prove (iii), it is sufficient to check that
\[\phi(^xg\boxtimes\,^xh)=x\phi(g\boxtimes h)x^{-1}\quad \text{and}\quad  (g'\boxtimes
h')(g\boxtimes h)(g'\boxtimes
h')^{-1}=\;^{\phi(g'\boxtimes h')}(g\boxtimes h)\,,\]
where $x\in \overline{G}$. By the
compatibility condition and using \eqref{E:2.3.1}, we obtain
\[\phi(^xg\boxtimes\,^xh)=\;^xg\;(^{xh}g^{-1})D_{\rho}(G)=\;^x(g\;^hg^{-1})D_{\rho}(G)=x\phi(g\boxtimes
h)x^{-1}D_{\rho}(G)\,.\]
 Using \eqref{E:2.4.2} and again the compatibility condition yields
\[(g'\boxtimes h')(g\boxtimes h)(g'\boxtimes
h')^{-1}=\;^{(g' \; ^{h'}g'^{-1})}g\boxtimes\;^{(g' \;
^{h'}g'^{-1})}h=\;^{\phi(g'\boxtimes
h')}(g\boxtimes h)\,.\]
\end{proof}

Since $D_{\rho}(G)$ and $D_{\rho}(H)$ are trivial for the
nonabelian tensor product, we obtain the following corollary which is
\cite[Proposition
2.3]{BL2}.

\begin{corollary}Let $G$ and $H$ be groups acting on each other and acting on
themselves by conjugation. If the actions are compatible, then the following hold.
\begin{itemize}
  \item[(i)] The free product $G\ast H$ acts on $G\otimes H$ so that $^x(g\otimes
h)=(^xg\otimes\, ^xh)$ for $g\in G$, $h\in H$ and $x\in G\ast H$.
  \item[(ii)] There are homomorphisms $\lambda \colon G\otimes H\rightarrow G$ and
$\lambda' \colon
G\otimes H\rightarrow H$ such that $\lambda(g\otimes h)=g\,^hg^{-1}$ and
$\lambda'(g\otimes h)=\;^ghh^{-1}$.
  \item[(iii)] The homomorphisms $\lambda$ and $\lambda'$ with the given actions are
crossed
modules.
\end{itemize}
\end{corollary}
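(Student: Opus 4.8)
The plan is to obtain the corollary as a direct specialization of Proposition~\ref{P:2.8} to the case where both groups act on themselves by conjugation and the mutual actions are compatible. The key observation, already recorded in the excerpt just before Corollary~\ref{C:2.5}, is that when $\rho\colon G\rightarrow\Inn(G)$ (i.e.\ $G$ acts on itself by conjugation), the set $E=\{{}^gg'gg'^{-1}g^{-1}\mid g,g'\in G\}$ consists only of the identity, so $D_{\rho}(G)$ is trivial; symmetrically $D_{\rho}(H)$ is trivial. Hence $\overline{G}=G/D_{\rho}(G)=G$ and $\overline{H}=H$, and the box-tensor product $G\boxtimes H$ coincides with the nonabelian tensor product $G\otimes H$ (the defining relations \eqref{E:1.2.1}--\eqref{E:1.2.2} become the Brown--Loday relations once $^gg'=gg'g^{-1}$). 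Also note that under these hypotheses a compatible action is fully compatible by Lemma~2.1, so Proposition~\ref{P:2.8} applies.

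Concretely, I would argue as follows. For part~(i), Proposition~\ref{P:2.8}(ii) gives an action of $\overline{G}=G$ on $G\otimes H$ via $^x(g\otimes h)={}^xg\otimes{}^xh$; by the symmetric version (interchanging the roles of $G$ and $H$) one gets an action of $H$ on $G\otimes H$ as well, and these two actions are compatible in the sense that they assemble into an action of the free product $G\ast H$ on $G\otimes H$, since both are given by the uniform formula $^x(g\otimes h)=({}^xg\otimes{}^xh)$ on generators $x\in G\cup H$ and the defining relations of $G\ast H$ impose no further constraints. For part~(ii), applying Proposition~\ref{P:2.8}(i) with $D_{\rho}(G)$ trivial yields that $\phi\colon G\otimes H\rightarrow G$, $g\otimes h\mapsto g\,{}^hg^{-1}$, is a homomorphism; this is $\lambda$. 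The homomorphism $\lambda'\colon G\otimes H\rightarrow H$, $g\otimes h\mapsto{}^ghh^{-1}$, is obtained from the symmetric version of Proposition~\ref{P:2.8}(i). For part~(iii), Proposition~\ref{P:2.8}(iii) (again with $D_{\rho}(G)$ trivial, so $\overline{G}=G$) says precisely that $\lambda$ together with the action of $G$ from (i) satisfies the two crossed-module axioms $\phi({}^xg\otimes{}^xh)=x\phi(g\otimes h)x^{-1}$ and $(g'\otimes h')(g\otimes h)(g'\otimes h')^{-1}={}^{\phi(g'\otimes h')}(g\otimes h)$; the symmetric statement handles $\lambda'$.

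The only genuinely substantive point is the verification that the two separate actions (of $G$ and of $H$) on $G\otimes H$ extend to a single well-defined action of the free product $G\ast H$; but this is automatic, since $G\ast H$ is the coproduct and a homomorphism out of it is determined by its restrictions to $G$ and $H$ with no compatibility relation between them, and both restrictions have already been produced. So I expect no real obstacle: the corollary is a translation of Proposition~\ref{P:2.8} and its $G\leftrightarrow H$ mirror image into the language of the nonabelian tensor product, using only the triviality of the deviational subgroups under conjugation actions. I would phrase the proof in two sentences citing Proposition~\ref{P:2.8}, the remark that $D_{\rho}(G)$ and $D_{\rho}(H)$ vanish for conjugation actions, and Lemma~2.1 to pass from compatible to fully compatible.
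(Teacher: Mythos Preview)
Your proposal is correct and follows exactly the approach the paper indicates: the paper's entire proof is the one-line remark preceding the corollary that $D_{\rho}(G)$ and $D_{\rho}(H)$ are trivial for the nonabelian tensor product, so the result is read off from Proposition~\ref{P:2.8} (and its $G\leftrightarrow H$ analogue). Your more careful justification of the free-product action via the coproduct universal property is a welcome elaboration, but the strategy is identical.
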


As an easy consequence of the defining relations for the box-tensor product, we
obtain the following expansion formulas.

\begin{lemma}\label{L:2.6}
Let $G$ and $H$ be groups and $k\in \mathbb{N}$. Then
\begin{align}
g^{k}\boxtimes h \ = \ & \prod_{i=1}^k \big( \, ^{g^{k-i}}(g\boxtimes h)\big),
\label{E:2.6.1} \\
g\boxtimes h^k \ = \ & \prod_{i=1}^k \big(\, ^{h^{i-1}}(g\boxtimes h)\big)
\label{E:2.6.2}
\end{align}
for all $g\in G$, $h\in H$.
\end{lemma}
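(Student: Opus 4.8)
The plan is to prove both identities by induction on $k$. For $k=1$ each product on the right has a single factor, namely ${}^{g^0}(g\boxtimes h)=g\boxtimes h$ in \eqref{E:2.6.1} and ${}^{h^0}(g\boxtimes h)=g\boxtimes h$ in \eqref{E:2.6.2}, so the base case is immediate. For the inductive step of \eqref{E:2.6.1}, I would write $g^{k+1}\boxtimes h = g\cdot g^{k}\boxtimes h$ and apply the defining relation \eqref{E:1.2.1} with the first variable split as $g\mid g^k$; this gives $g^{k+1}\boxtimes h = ({}^{g}(g^{k}\boxtimes h))(g\boxtimes h)$, using that ${}^{g}(g\boxtimes h)=g\boxtimes h$ would be wrong — rather, \eqref{E:1.2.1} yields $({}^{g}g^{k}\boxtimes {}^{g}h)(g\boxtimes h)$, and since the action is by automorphisms ${}^{g}g^{k}=({}^{g}g)^{k}$; but it is cleaner to split off the last factor instead, writing $g^{k+1}\boxtimes h = g^{k}\cdot g\boxtimes h = ({}^{g^{k}}(g\boxtimes h))(g^{k}\boxtimes h)$ via \eqref{E:1.2.1} applied to the decomposition $g^{k}\mid g$. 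Then the inductive hypothesis replaces $g^{k}\boxtimes h$ by $\prod_{i=1}^{k}{}^{g^{k-i}}(g\boxtimes h)$, and after reindexing the combined product is $\prod_{i=1}^{k+1}{}^{g^{(k+1)-i}}(g\boxtimes h)$, as desired.

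For \eqref{E:2.6.2} I would argue symmetrically, this time peeling off a factor of $h$ from the front: $g\boxtimes h^{k+1} = g\boxtimes h\cdot h^{k}$, and \eqref{E:1.2.2} applied to the decomposition $h\mid h^{k}$ gives $(g\boxtimes h)({}^{h}g\boxtimes {}^{h}h^{k})$; rewriting ${}^{h}g\boxtimes {}^{h}h^{k} = {}^{h}(g\boxtimes h^{k})$ using the action on $G\boxtimes H$, and then applying the inductive hypothesis to $g\boxtimes h^{k}$, yields $(g\boxtimes h)\cdot{}^{h}\!\bigl(\prod_{i=1}^{k}{}^{h^{i-1}}(g\boxtimes h)\bigr) = (g\boxtimes h)\prod_{i=1}^{k}{}^{h^{i}}(g\boxtimes h) = \prod_{i=1}^{k+1}{}^{h^{i-1}}(g\boxtimes h)$ after reindexing. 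Here I use that the action of $h$ distributes over products in $G\boxtimes H$ and that ${}^{h}({}^{h^{i-1}}x)={}^{h^{i}}x$, which is just associativity of the action $\rho_H$ restricted to the cyclic subgroup generated by $h$.

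The only genuine subtlety is making sure the direction of the split in the defining relations matches the order in which the factors are required to appear in the products \eqref{E:2.6.1} and \eqref{E:2.6.2}; choosing to peel off the last copy of $g$ in the first identity and the first copy of $h$ in the second is exactly what makes the exponents $g^{k-i}$ (descending) and $h^{i-1}$ (ascending) come out correctly without having to rearrange noncommuting factors. Beyond that, the proof is a routine reindexing, so I expect no real obstacle; the main point to be careful about is that these are statements in the (possibly nonabelian) group $G\boxtimes H$, so the order of the product matters and one must not silently commute terms.
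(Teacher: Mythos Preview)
Your proof is correct and follows essentially the same inductive strategy as the paper: split off one copy of $g$ (resp.\ $h$), apply the defining relation \eqref{E:1.2.1} (resp.\ \eqref{E:1.2.2}), invoke the inductive hypothesis, and reindex. The only cosmetic difference is that for \eqref{E:2.6.1} the paper peels off the \emph{first} factor, writing $g^{k}=g\cdot g^{k-1}$ and then acting by $g$ on the product coming from the inductive hypothesis, whereas you peel off the \emph{last} factor via $g^{k+1}=g^{k}\cdot g$; both choices lead to the same descending-exponent product without rearranging noncommuting terms, so neither buys anything the other doesn't.
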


\begin{proof}
The proof of \eqref{E:2.6.1} is by induction on $k$. The claim is obviously true for
$k=1$. Let $k>1$, then
$g^{k}\boxtimes h=gg^{k-1}\boxtimes h=\;^g(g^{k-1}\boxtimes h)(g\boxtimes h)$ by
\eqref{E:1.2.1}. Assuming that the claim is true for $k-1$, we obtain
$g^{k}\boxtimes h=\;^g\big(\prod_{i=1}^{k-1}\ ^{g^{k-1-i}}(g\boxtimes
h)\big)(g\boxtimes h)$.
This yields \eqref{E:2.6.1} immediately.
Similarly, using \eqref{E:1.2.2}, we obtain \eqref{E:2.6.2}.
\end{proof}

We need the following lemma for results in Section \ref{S:4}.

\begin{lemma}\label{L:2.10}
Let $A$ and $B$ be groups and $\phi \colon A \rightarrow B$ be a crossed module. Set
$C=A\rtimes B$ and $T=\big(1,\phi(A)\big)^C$, the normal closure of
$\big\{\big(1,\phi(a)\big)\mid a\in A\big\}$
in $C$. Then for $k\in \mathbb{N}$ there exists $t_k\in T$ such that
\[\big((1,b)(a,1)\big)^k=t_k(1,b)^k(a,1)^k\] for all $a\in A$, $b\in \phi(A)$.
\end{lemma}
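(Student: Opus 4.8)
The plan is to prove the statement by induction on $k$, the base case $k=1$ being trivial with $t_1=1$. For the inductive step I would expand $\big((1,b)(a,1)\big)^{k+1}=\big((1,b)(a,1)\big)^k\,(1,b)(a,1)$, substitute the inductive expression $t_k(1,b)^k(a,1)^k$ for the first factor, and then try to migrate the leftover $(1,b)(a,1)$ past the powers $(1,b)^k(a,1)^k$ so as to collect a $(1,b)^{k+1}(a,1)^{k+1}$ at the right and absorb everything else into $T$. The key structural facts I would use are: first, $T$ is normal in $C$ by construction, so any element of $T$ conjugated by any element of $C$ stays in $T$, and products of elements of $T$ stay in $T$; second, the crossed module identity $^{\phi(a)}a'=aa'a^{-1}$, which controls how elements $(1,b)$ with $b\in\phi(A)$ act on $A$ inside the semidirect product.

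The heart of the computation is the commutation in $C=A\rtimes B$. Writing the product in $C$ as $(a,b)(a',b')=(a\,{}^{b}a',bb')$, I would first observe that for $b\in\phi(A)$, say $b=\phi(a_0)$, one has $(1,b)(a,1)(1,b)^{-1}=({}^{b}a,1)=({}^{\phi(a_0)}a,1)=(a_0 a a_0^{-1},1)$, and more to the point $(a,1)(1,b)=(1,b)({}^{b^{-1}}a,1)$, so moving an $A$-element rightward past a $B$-element only changes it by conjugation. Hence moving the trailing $(a,1)$ leftward past $(1,b)^k(a,1)^k$ produces $(1,b)^k(a,1)^k(a,1) = (1,b)^{k+1}(a,1)^{k+1}\cdot\big[(1,b)^{-(k+1)}(1,b)\big]\cdots$ — more carefully, I would move the $(1,b)$ factor leftward past $(a,1)^k$ using the relation above, which replaces $(a,1)^k$ by a conjugate $\big(\,{}^{b^{-1}}(a^k),1\big)$ times the original, and crucially $\big(\,{}^{b^{-1}}(a^k),1\big)(a^k,1)^{-1}$ lies in $T$ because it is a product of conjugates of elements $(1,\phi(a))$: indeed ${}^{b^{-1}}(a^k)\,a^{-k}=[\,{}^{b^{-1}}\!,\,]$-type expression, but the cleanest route is to note $(a^k,1)(1,b)(a^{-k},1)(1,b)^{-1} = ({}^{1}a^k\cdot {}^{b}(a^{-k}),1)$... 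I would instead argue directly that the "error term" is a product of conjugates (in $C$) of elements of the form $(1,\phi(a')(a')^{-1}\cdot a')$ — on reflection the simplest formulation is: every commutator $[(1,b),(a,1)]$ with $b\in\phi(A)$ equals $(\,{}^{b}a\,a^{-1},1)=(a_0aa_0^{-1}a^{-1},1)$, and since $a_0aa_0^{-1}a^{-1}={}^{\phi(a_0)}a\cdot a^{-1}$, this element, pushed into $C$, can be rewritten as $(1,\phi(a_0))(a,1)(1,\phi(a_0))^{-1}(a,1)^{-1}$, visibly a product of a conjugate of $(1,\phi(a_0))$ and $(1,\phi(a_0))^{-1}$, hence in $T$.

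Once that is in hand, the bookkeeping is routine: each time I slide a $B$-factor past an $A$-factor (or vice versa) I pick up an element of $T$; since $T$ is normal, all these accumulated factors can be gathered to the far left into a single $t_{k+1}\in T$, leaving exactly $(1,b)^{k+1}(a,1)^{k+1}$. The main obstacle — and the only place real care is needed — is exactly this verification that the commutation errors all land in $T$: one must use that $b$ lies in the image $\phi(A)$ (not an arbitrary element of $B$), so that the crossed module axiom $^{\phi(a_0)}a = a_0aa_0^{-1}$ applies and the error term is recognizably built from the generators $(1,\phi(a))$ and their $C$-conjugates. I would therefore isolate this as a preliminary observation — that for $b\in\phi(A)$ and $a\in A$ the commutators $[(1,b),(a,1)]$ and $[(a,1),(1,b)]$ lie in $T$ — and then the induction goes through cleanly.
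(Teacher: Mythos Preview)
Your induction-and-commutation strategy is exactly the paper's and is correct. You work harder than needed, however: the paper simply observes that $(1,b)$ already lies in $T$ (it is one of the generators, since $b\in\phi(A)$), so normality of $T$ gives $x(1,b)=t(1,b)x$ with $t=x(1,b)x^{-1}(1,b)^{-1}\in T$ for any $x\in C$, and taking $x=(1,b)^{k-1}(a,1)^{k-1}$ finishes the inductive step in one line --- the crossed-module axiom ${}^{\phi(a_0)}a=a_0aa_0^{-1}$ is never invoked.
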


\begin{proof}
Recall that the elements in a semi-direct product $A\rtimes B$ of two groups $A$ and
$B$ can be written as pairs $(a,b)$ with $a\in A$ and $b\in B$. The multiplication
in $A\rtimes B$ is then given by
\begin{equation}\label{E:2.10.1}
(a_1,b_1)(a_2,b_2)=(a_1 \;^{b_1}a_2,b_1b_2),
\end{equation}
where $a_1,a_2\in A$ and $b_1,b_2\in B$. The proof of the claim is by induction on
$k$. It is trivially true for $k=1$. Assume the claim holds for some $k-1$ where
$k\geq 2$, that is $\big((1,b)(a,1)\big)^{k-1}=t_{k-1}(1,b)^{k-1}(a,1)^{k-1}$ for
some $t_{k-1}\in T$. Thus we obtain from our hypothesis that
\begin{equation}\label{E:2.10.2}
\big((1,b)(a,1)\big)^{k}=t_{k-1}(1,b)^{k-1}(a,1)^{k-1}(1,b)(a,1).
\end{equation}
Since $T\triangleleft C$, it follows that for $x\in C$ and $u\in T$, there exists
$t\in T$ with $xu=tux$. Letting $x=(1,b)^{k-1}(a,1)^{k-1}$ and $u=(1,b)$ on the
right hand side of \eqref{E:2.10.2}, we arrive at
\[\big((1,b)(a,1)\big)^{k}=t_{k-1}t(1,b)^k(a,1)^k\,.\] Setting $t_k=t_{k-1}t$ in
the above proves our claim.
\end{proof}

Next we modify the derivative subgroup $D_H(G)$ of $G$ by $H$ for the box-tensor product. Instead of $D_H(G)$, we will consider the image of $D_H(G)$ under the homomorphism defined in Proposition \ref{P:2.8}.

\begin{definition} \label{D:deri_devi}
Let $G$ and $H$ be groups with $H$ acting on $G$ and $\rho \colon G \rightarrow
\Aut(G)$. Set $\overline{D_H(G)}=D_H(G)D_\rho(G)/D_\rho(G)$.
\end{definition}

\begin{theorem}\label{T:2.11}
Let $G$ and $H$ be groups that acting on each other fully compatibly.
If $\overline{D_H(G)}$ is locally cyclic, then $G\boxtimes H$ is  abelian.
\end{theorem}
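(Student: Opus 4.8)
The plan is to adapt the classical argument (due to Brown–Johnson–Robinson, \cite{BJR}) that the nonabelian tensor product $G\otimes H$ is abelian when $\overline{D_H(G)}$ is locally cyclic, now working with the crossed module $\phi\colon G\boxtimes H\to\overline{G}$ from Proposition \ref{P:2.8} in place of the usual one. The key structural fact is the crossed module identity $(g'\boxtimes h')(g\boxtimes h)(g'\boxtimes h')^{-1}=\,^{\phi(g'\boxtimes h')}(g\boxtimes h)$, which reduces abelianness of $G\boxtimes H$ to showing that the image $\phi(G\boxtimes H)$ acts trivially on $G\boxtimes H$; equivalently, that $\phi(G\boxtimes H)$ acts trivially on $G$ and on $H$ via the given actions. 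Since $\phi(g\boxtimes h)=g\,^hg^{-1}D_\rho(G)$ and $D_\rho(G)$ already acts trivially on $G\cup H$ by the Proposition following Definition~\ref{D:deri_devi}, the image $\phi(G\boxtimes H)$ is precisely $\overline{D_H(G)}$ as a subgroup of $\overline{G}$ acting on $G$ and $H$.

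First I would record that $\overline{D_H(G)}=\phi(G\boxtimes H)$ and that $G\boxtimes H$ acts on itself through $\phi$ exactly as $\overline{D_H(G)}$ does. Then, using the hypothesis that $\overline{D_H(G)}$ is locally cyclic, I would argue as follows: take two generators $x=g\boxtimes h$ and $y=g'\boxtimes h'$ of $G\boxtimes H$; their images $\phi(x),\phi(y)$ lie in $\overline{D_H(G)}$, and since that group is locally cyclic, the subgroup $\langle\phi(x),\phi(y)\rangle$ is cyclic, say generated by some $z\in\overline{D_H(G)}$. The action of $\overline{G}$ on $G\boxtimes H$ restricted to $\overline{D_H(G)}$ factors through how these elements act on $G$ and $H$; the point is to show $\phi(y)$ fixes $x$ (and symmetrically), which by the crossed module law is the commutator relation $[y,x]=1$.

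The substance of the argument is the following: by relation \eqref{E:2.4.2}, conjugation of $g\boxtimes h$ by $g'\boxtimes h'$ is given by the action of $[g',h']$, and one checks via \eqref{E:2.5.3}-type expansions (their box-analogues \eqref{E:2.4.3}–\eqref{E:2.4.4}) that every generator of $G\boxtimes H$ lies in the image of a map that is equivariant for these actions, so that two generators whose $\phi$-images generate a cyclic subgroup of $\overline{D_H(G)}$ must commute — a power of one acts as a power of the other, and a crossed module of the form $\phi\colon A\to B$ with $\phi(A)$ abelian and acting so that $^{\phi(a)}a'=aa'a^{-1}$ forces $A$ itself to be abelian once $\phi(A)$ is generated by images that pairwise commute on $A$. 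Concretely, if $\phi(y)=z^m$ and $\phi(x)=z^n$ with $z$ itself realized as $\phi$ of some element or a product thereof, then $^{\phi(y)}x = x$ will follow because the action of $z$ on $x$ can be computed to be trivial using that $D_\rho(G)$ kills the actions and $\overline{D_H(G)}$ being cyclic collapses the relevant nested actions via full compatibility.

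The main obstacle I anticipate is precisely this last reduction: unlike the nonabelian tensor case, here $\phi$ lands in the quotient $\overline{G}=G/D_\rho(G)$ rather than in $G$, so I must be careful that the locally cyclic hypothesis on $\overline{D_H(G)}$ (a subgroup of $\overline G$) genuinely controls the action on $G$ and $H$ and not merely on $\overline G$; this is exactly why $D_\rho(G)$ acting trivially on $G\cup H$ (the Proposition after Definition~\ref{D:deri_devi}) is essential, and I would lean on \eqref{E:2.3.1} together with the full compatibility conditions \eqref{fc1}–\eqref{fc4} to push the nested-action computations through. Once the action of $\overline{D_H(G)}$ on $G\boxtimes H$ is seen to be trivial — which the locally cyclic hypothesis guarantees by the standard fact that a locally cyclic group acting on itself by conjugation... here, that any two elements lie in a common cyclic subgroup, hence the crossed-module self-action is trivial — the crossed module identity \eqref{E:2.4.2} immediately gives $(g'\boxtimes h')(g\boxtimes h)(g'\boxtimes h')^{-1}=g\boxtimes h$ for all generators, so $G\boxtimes H$ is abelian.
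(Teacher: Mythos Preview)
Your setup is correct: you identify the crossed module $\phi\colon G\boxtimes H\to\overline{G}$ from Proposition~\ref{P:2.8}, observe that its image is $\overline{D_H(G)}$, and recognize that the crossed module identity $^{\phi(y)}x=yxy^{-1}$ reduces abelianness to controlling this action. But the argument for why two elements commute has a genuine gap. You take $x,y$ with $\phi(x),\phi(y)$ in a cyclic subgroup $\langle z\rangle$ of $\overline{D_H(G)}$ and want to deduce $^{\phi(y)}x=x$; however, the locally cyclic hypothesis tells you nothing directly about the action of $z$ on $G\boxtimes H$. That action is conjugation by any preimage of $z$, and asserting it is trivial is precisely what you are trying to prove. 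The sentence ``the action of $z$ on $x$ can be computed to be trivial using that $D_\rho(G)$ kills the actions'' is not substantiated, and the detour through showing $\overline{D_H(G)}$ acts trivially on $G$ and on $H$ is a red herring: that action need not be trivial even when $\overline{D_H(G)}$ is cyclic.

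The missing ingredient, which the paper uses, is the standard fact that the kernel of any crossed module is central in its domain (from $^{\phi(a)}a'=aa'a^{-1}$ with $\phi(a)=1$). With this in hand the proof is short and does not require working with generators $g\boxtimes h$: given arbitrary $t_1,t_2\in G\boxtimes H$, the quotient $(G\boxtimes H)/\ker\phi\cong\overline{D_H(G)}$ is locally cyclic, so there is $x\in G\boxtimes H$ with $t_1=x^{r}s_1$ and $t_2=x^{s}s_2$ for some $s_1,s_2\in\ker\phi$. Since $\ker\phi$ is central and $[x^{r},x^{s}]=1$, an expansion of $[t_1,t_2]$ gives $1$. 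Your instinct to lift $z$ to ``$\phi$ of some element'' is exactly right, but the way to finish is this centrality-of-kernel computation, not an unproved claim that the lifted element acts trivially.
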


\begin{proof}
Suppose $\overline{D_H(G)}$ is locally cyclic. By Proposition \ref{P:2.8}, we have that $\phi(G\boxtimes
H)=\overline{D_H(G)}$. Since $\phi$ is an epimorphism, we have that $G\boxtimes H
/(\ker \phi_G)\cong \overline{D_H(G)}$. Let $t_1,t_2 \in G\boxtimes H$. Since
$\overline{D_H(G)}$ is locally cyclic, we have that $\langle t_1 \ker \lambda_G ,
t_2 \ker \lambda_G \rangle$ is a cyclic subgroup of $G\boxtimes H /(\ker \phi_G)$.
Thus there exists $x \in G \boxtimes H$ so that $t_1 = x^r s_1$ and $t_2 = x^s s_2$
for some $r,s \in\mathbb{Z}$ and $s_1, s_2 \in \ker \phi_G$.
 Hence $[t_1,t_2] = [x^rs_1 , x^s s_2]$. Expanding the commutator in the first factor
 yields $[t_1 ,t_2]={}^{x^r} [s_1,x^s s_2 ]\; [x^r, x^s s_2]$. Recall that $\phi$ is
a crossed module and hence $\ker \phi$ is a central subgroup of $G\boxtimes H$. So
$[t_1, t_2] =1_\boxtimes$, for all $t_1,t_2\in G\boxtimes H$. We then conclude
$[(G\boxtimes H),(G\boxtimes H)]$ is trivial, so $G\boxtimes H$ is abelian.
\end{proof}

\section{Some Finiteness Conditions} \label{S:4}

In this section we will give a purely group theoretic proof that the box-tensor
product of two finite groups is finite.
Without the compatibility conditions, the finiteness of the tensor product $G\otimes
H$ as given in \cite{NI} is not known even
when the two groups $G$ and $H$ are finite. In Section \ref{S:2} we have seen examples where
the mutual actions are not fully compatible but the product formed is still finite,
when the two factors are finite. In this section, we give sufficient conditions
under which the box tensor product of two finite groups is finite. The main tool of
our proof is Dietzmann's
Lemma \cite{D} (see also \cite{DR} for a more accessible reference). Noting that a
subset of a group is \emph{normal}
if it contains all conjugates of its elements, Dietzmann's Lemma can be stated as
follows.

\begin{lemma}[Dietzmann's Lemma]\label{L:3.1} In any group $G$ a finite normal
subset consisting of
elements of finite order generates a finite normal subgroup of $G$.
\end{lemma}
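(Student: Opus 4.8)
The plan is to prove Dietzmann's Lemma directly by bounding the order of $N:=\langle S\rangle$, where $S=\{s_1,\dots,s_k\}$ denotes the given finite normal subset, each $s_i$ having finite order $n_i$. Normality of $N$ in $G$ is immediate: since $S$ is closed under conjugation and finite, $gSg^{-1}=S$ for every $g\in G$, so $gNg^{-1}=\langle gSg^{-1}\rangle=N$. The content is therefore the finiteness of $N$, and the target estimate is $|N|\le n_1 n_2\cdots n_k$.

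To get this, I would first note that every element of $N$ is a \emph{positive} word in the $s_i$: since $s_i^{-1}=s_i^{\,n_i-1}$, any word in the $s_i^{\pm1}$ rewrites without inverses as a product $s_{j_1}s_{j_2}\cdots s_{j_m}$. Then I would show any such positive word can be brought to the form $s_1^{a_1}s_2^{a_2}\cdots s_k^{a_k}$ with each $a_i\ge0$; equivalently, the index sequence $(j_1,\dots,j_m)$ can be made non-decreasing without changing the group element or the length $m$. The rewriting move is: at a descent $j_\ell>j_{\ell+1}$, replace the block $s_{j_\ell}s_{j_{\ell+1}}$ by $s_{j_{\ell+1}}\bigl(s_{j_{\ell+1}}^{-1}s_{j_\ell}s_{j_{\ell+1}}\bigr)$; the new second letter is a conjugate of $s_{j_\ell}$, hence again lies in $S$ — this is exactly where normality of $S$ is used. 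This turns one length-$m$ positive word into another, and the index sequence strictly decreases in the lexicographic order on $\{1,\dots,k\}^m$ (the first changed coordinate, coordinate $\ell$, drops from $j_\ell$ to $j_{\ell+1}$). As $\{1,\dots,k\}^m$ is finite, after finitely many moves no descent remains, so the index sequence is non-decreasing and $g=s_1^{a_1}\cdots s_k^{a_k}$. Reducing each exponent modulo $n_i$ gives $g=s_1^{b_1}\cdots s_k^{b_k}$ with $0\le b_i<n_i$, so $|N|\le n_1\cdots n_k<\infty$, and $N$ is a finite normal subgroup of $G$.

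The one point needing real care is the termination of the sorting step: the naive measures (word length, or number of inversions) need not behave monotonically under the move, since the replacement letter $s_{j_{\ell+1}}^{-1}s_{j_\ell}s_{j_{\ell+1}}$ may carry an arbitrarily large index and create new inversions to its right. The resolution — and the crux of the proof — is that the move is length-preserving and strictly decreasing for the lexicographic order on the \emph{fixed finite} set of length-$m$ index sequences, so it must terminate regardless of which element of $S$ that new letter turns out to be.
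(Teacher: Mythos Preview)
The paper does not supply its own proof of this lemma; it is quoted as a known result with references to Dietzmann's original note \cite{D} and to Robinson's textbook \cite{DR}. So there is nothing in the paper to compare your argument against.

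That said, your proof is correct and is essentially the classical argument one finds in those references. The normality of $N=\langle S\rangle$ is immediate from $gSg^{-1}=S$, the passage to positive words via $s_i^{-1}=s_i^{\,n_i-1}$ is fine, and the sorting step is the heart of the matter. Your termination argument is exactly the right one: the rewriting move $s_{j_\ell}s_{j_{\ell+1}}\mapsto s_{j_{\ell+1}}\bigl(s_{j_{\ell+1}}^{-1}s_{j_\ell}s_{j_{\ell+1}}\bigr)$ is length-preserving, and although the new letter at position $\ell+1$ may carry any index in $\{1,\dots,k\}$, the index at position $\ell$ strictly drops from $j_\ell$ to $j_{\ell+1}$, so the whole index sequence strictly decreases in the lexicographic order on the fixed finite set $\{1,\dots,k\}^m$. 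Termination follows, and the final reduction of exponents modulo $n_i$ gives the bound $|N|\le n_1\cdots n_k$. You have correctly anticipated and resolved the one genuine subtlety (that naive measures like inversion count need not decrease).
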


Employing Dietzmann's Lemma as our main tool, we will show here that the box-tensor
product of two finite groups is finite using only group theoretic means. We start
with various lemmas and propositions addressing finiteness conditions.

\begin{lemma}\label{L:3.2}Let $A$ and $B$ be groups and $\phi \colon A \rightarrow
B$ be a
crossed module. Set $C=A\rtimes B$.
Let $b\in B$ be such that $b^k=1$ for some $k\in \mathbb{N}$ and assume that
$\prod_{i=1}^k\ ^{b^{k-i}}a=1$ for some $a\in A$. Then in $C$ we have
$(1,1)=[(1,b^{-1})(a,1)]^k$.
\end{lemma}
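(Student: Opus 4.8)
The plan is to compute the $k$-th power directly inside $C=A\rtimes B$, using only the multiplication rule \eqref{E:2.10.1}; the crossed module map $\phi$ itself plays no role in this particular computation, it merely fixes the ambient group together with the action of $B$ on $A$. First I would simplify the base element: by \eqref{E:2.10.1} we have $(1,b^{-1})(a,1)=\big({}^{b^{-1}}a,\,b^{-1}\big)$, so it suffices to prove $\big({}^{b^{-1}}a,\,b^{-1}\big)^k=(1,1)$.

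Next I would record, by a one-line induction on $k$ from \eqref{E:2.10.1}, the standard power formula $(c,d)^k=\big(\prod_{j=0}^{k-1}{}^{d^{\,j}}c,\;d^{\,k}\big)$, valid for all $c\in A$ and $d\in B$, where the displayed product is taken in order of increasing $j$ (so the leftmost factor is $c$ itself). Specializing to $c={}^{b^{-1}}a$ and $d=b^{-1}$, and using that the action of $B$ on $A$ is a left action, so that ${}^{d^{\,j}}c={}^{b^{-j}}\big({}^{b^{-1}}a\big)={}^{b^{-(j+1)}}a$, this gives
\[\big((1,b^{-1})(a,1)\big)^k=\Big(\,\prod_{i=1}^{k}{}^{b^{-i}}a,\;b^{-k}\Big).\]

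Finally I would invoke the hypothesis $b^k=1$ in both coordinates. On the right coordinate it yields $b^{-k}=1$. On the left coordinate it allows the substitution ${}^{b^{-i}}a={}^{b^{k-i}}a$ for each $i$, whence $\prod_{i=1}^{k}{}^{b^{-i}}a=\prod_{i=1}^{k}{}^{b^{k-i}}a$, which is precisely the product assumed to equal $1$. Therefore $\big((1,b^{-1})(a,1)\big)^k=(1,1)$, proving the claim.

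There is no serious obstacle here; the only point requiring care is the order of the factors in the left coordinate, since the terms ${}^{b^{k-i}}a$ need not commute. One must check that the reindexing $i\mapsto k-i$ reproduces exactly the product $\prod_{i=1}^{k}{}^{b^{k-i}}a$ as written in the statement — i.e.\ $\,{}^{b^{k-1}}a\cdot{}^{b^{k-2}}a\cdots{}^{b^{0}}a$, with the highest power of $b$ on the left — and not its reverse; a direct inspection of the indices confirms this.
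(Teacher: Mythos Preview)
Your proof is correct and is essentially the same direct computation in $C=A\rtimes B$ as the paper's, just organized in the opposite direction: the paper starts from the hypothesis product $\prod_{i=1}^{k}{}^{b^{k-i}}a=1$, rewrites each factor as the conjugate $(1,b^{k-i})(a,1)(1,b^{k-i})^{-1}$, and telescopes to obtain $[(1,b^{-1})(a,1)]^k$, whereas you compute the $k$-th power via the standard semidirect product power formula and then match it against the hypothesis. Your care about the order of the noncommuting factors is well placed and the reindexing is correct.
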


\begin{proof}
Using the hypothesis and the multiplication in $C$ as given in \eqref{E:2.10.1}, we
obtain
\[
1_C=\big(\prod_{i=1}^k\ ^{b^{k-i}}a,1\big)=\prod_{i=1}^k (^{b^{k-i}}a,1)=\prod_{i=1}^k
(1,b^{k-i})(a,1)(1,b^{k-i})^{-1}.
\]
Notice that $(1,b^{k-1})=(1,b^{-1})$ and $(1,b^{k-i})^{-1}(1,b^{k-i-1})=(1,b^{-1})$.
This yields \[\prod_{i=1}^k (1,b^{k-i})(a,1)(1,b^{k-i})^{-1}=[(1,b^{-1})(a,1)]^k\,,\]
the desired result.
\end{proof}

\begin{prop}\label{P:3.3}
Let $A$ and $B$ be groups and $\phi \colon A\rightarrow B$ be a crossed module.
Consider
$C=A\rtimes B$, where $B$ acts on $A$ as given by the crossed module. If $B$ is
finite, then $T=\big(1,\phi(A)\big)^C$ is finite.
\end{prop}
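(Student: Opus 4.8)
The plan is to identify $T$ explicitly as a finite subgroup of $C$. First I would compute a general conjugate of a generator. Writing elements of $C=A\rtimes B$ as pairs and using the multiplication rule \eqref{E:2.10.1} together with the two crossed module identities $\phi({}^{b}a)=b\,\phi(a)\,b^{-1}$ and ${}^{\phi(a)}a'=aa'a^{-1}$, a short calculation gives
\[
(a_1,b_1)\,(1,\phi(a))\,(a_1,b_1)^{-1}=\big([a_1,\,{}^{b_1}a],\ \phi({}^{b_1}a)\big)
\]
for all $a,a_1\in A$ and $b_1\in B$. Thus every conjugate of every generator $(1,\phi(a))$ of $T$ lies in the subset $M=\{(c,d)\mid c\in[A,A],\ d\in\phi(A)\}$ of $C$. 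Since $[A,A]$ is characteristic in $A\lhd C$ and $\phi(A)\le B$ acts on $A$ by automorphisms preserving $[A,A]$, the subset $M=([A,A]\times 1)(1\times\phi(A))$ is in fact a subgroup of $C$, of order $|[A,A]|\cdot|\phi(A)|$. Hence $T\le M$ (and one checks equality, since $(1,\phi(a))\in T$ and the displayed identity also puts each $([a_1,a],1)$ in $T$).

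It then remains to see that $M$ is finite; as $\phi(A)\le B$ is finite this amounts to showing that $[A,A]$ is finite. Here the key observation is that $\ker\phi$ is central in $A$: if $a\in\ker\phi$, then the second crossed module axiom gives $aa'a^{-1}={}^{\phi(a)}a'={}^{1}a'=a'$ for every $a'\in A$, so $a\in Z(A)$. Consequently $A/Z(A)$ is a homomorphic image of $A/\ker\phi\cong\phi(A)\le B$ and is therefore finite. By Schur's theorem $[A,A]$ is finite, and hence so are $M$ and $T$.

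The step that has to be done carefully is the first one: getting the conjugation formula right and, from it, the identification of $T$ with the manifestly finite group $M$ --- this is what collapses an a priori unwieldy normal closure into a controllable object. The remainder is the classical Schur argument, which, if one wishes to stay within the purely group-theoretic framework of this section, can itself be recovered from Dietzmann's Lemma (Lemma \ref{L:3.1}) applied to the finite normal set of commutators of $A$.
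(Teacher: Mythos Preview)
Your proof is correct, but it follows a genuinely different route from the paper's. The paper does not identify $T$ with a concrete subgroup; instead it works one generator at a time. It writes $T=\prod_{i=1}^n N_i$ with $N_i=(1,\phi(x_i))^C$, computes (in a form equivalent to your conjugation formula) that the $C$-conjugacy class $S_i$ of $(1,\phi(x_i))$ has at most $|B|^2$ elements, notes that these elements have finite order since they are conjugate to an element of $(1,B)$, and then applies Dietzmann's Lemma (Lemma~\ref{L:3.1}) directly to conclude each $N_i$, and hence $T$, is finite. Your approach, by contrast, packages the same conjugation computation into the single identification $T=[A,A]\rtimes\phi(A)$ and then brings in Schur's theorem (via $\ker\phi\le Z(A)$) to finish. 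Your argument yields strictly more---an explicit description of $T$ rather than a mere finiteness statement---at the cost of importing Schur's theorem, whereas the paper keeps everything inside the Dietzmann framework that drives the rest of Section~\ref{S:4}. As you observe, Schur's theorem can itself be recovered from Dietzmann's Lemma, so the two arguments are ultimately close in spirit.
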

\begin{proof} Since $\big(1,\phi(A)\big)=\big\{\big(1,\phi(x)\big),x\in A\big\}$ is
a subset of $(1,B)$, it
is finite, say $|\big(1,\phi(A)\big)|$. Then there exist $x_1,x_2,\ldots ,x_n\in A$
such
that
\[\big(1,\phi(A)\big)=\big\{\big(1,\phi(x_1)\big),\big(1,\phi(x_2)\big),\ldots
,\big(1,\phi(x_n)\big)\big\}\,.\]
Let $N_i= \big(1,\phi(x_i)\big)^C$. We observe that $N_i\triangleleft C$. Setting
$T=\prod_{i=1}^n N_i$, it follows that $T\triangleleft C$ as the product of finitely
many normal subgroups. To prove that $T$ is finite, it suffices to show that $N_i$
is finite for $1\leq i\leq n$. Consider
$S_i=\big\{(y,b)\big(1,\phi(x_i)\big)(y,b)^{-1}\mid (y,b)\in
C\big\}$. We have $N_i=\left\langle S_i\right\rangle$. Observing that $S_i$ is a normal
set and that each element in $S_i$ has finite order, it follows from Lemma
\ref{L:3.1} that $N_i$ is finite if we can show that $S_i$ is finite.

We first show that the set
$S_i^{*}=\{(^{\phi(y)b}x_i,1)(^bx_i^{-1},1)\big(1,\phi(^bx_i)\big)\mid y\in A, b\in
B\}$ is
finite. For given $x_i$, the product of the second and third factor of an element in
$S_i^{*}$ takes at most $|B|$ values. Since $\phi(y)b\in B$, the first factor takes
at most $|B|$ values for fixed $b\in B$. Thus $|S_i^{*}|\leq |B|^2$, and hence
$S_i^{*}$ is finite. We claim now that $S_i=S_{i}^{*}$ by establishing
\begin{equation}\label{E:3.3.1}
(y,b)\big(1,\phi(x_i)\big)(y,b)^{-1}=(^{\phi(y)b}x_i,1)(^bx_i^{-1},1)\big(1,\phi(^bx_i)\big),
\end{equation}
for all $y\in A$ and $b\in B$ and hence $S_i$ is finite as required. It remains to
be shown that \eqref{E:3.3.1} holds. \par  Recalling that
$(y,b)^{-1}=(^{b^{-1}}y^{-1},b^{-1})$ and that $\phi$ is a crossed module with
$\phi(^bx_i)=b\phi(x_i)b^{-1}$, multiplication in $C$ then yields
\begin{equation}\label{E:3.3.2}
(y,b)\big(1,\phi(x_i)\big)(y,b)^{-1}=\big(y \ ^{\phi(^bx_i)}y^{-1},\phi(^bx_i)\big).
\end{equation}

Observing that $\phi$ is a crossed module with
$^{\phi(^bx_i)}y^{-1}=\;^bx_iy^{-1} \ ^bx_i^{-1}$, we obtain for the
$A$-component of the right hand side of \eqref{E:3.3.2} that
\[y \ ^{\phi(^bx_i)}y^{-1}=y \ ^bx_iy^{-1}\ ^bx_i^{-1}=\;^{\phi(y)b}x_i \
^bx_i^{-1}\,.\]
Substituting this into the right hand side of \eqref{E:3.3.2} yields
\begin{equation}\label{E:3.3.3}
\big(y \ ^{\phi(^bx_i)}y^{-1},\phi(^bx_i)\big)=\big(^{\phi(y)b}x_i \
^bx_i^{-1},\phi(^bx_i)\big).
\end{equation}

On the other hand, multiplication in $C$ leads to
\begin{equation}\label{E:3.3.4}
(^{\phi(y)b}x_i,1)(^bx_i^{-1},1)\big(1,\phi(^bx_i)\big)=\big(^{\phi(y)b}x_i \
^bx_i^{-1},\phi(^bx_i)\big)
\end{equation}
for the right hand side of \eqref{E:3.3.1}. By \eqref{E:3.3.2},
\eqref{E:3.3.3} and \eqref{E:3.3.4}, it follows that \eqref{E:3.3.1} holds, proving
our claim.
\end{proof}

In the next proposition we establish that the generating set for the box-tensor
product $G\boxtimes H$ is a finite normal set provided $G$ and $H$ are finite
groups.

\begin{prop}\label{P:3.4}
Let $G$ and $H$ be groups acting on each other fully compatibly. Consider
$Y=\{g\boxtimes
h \mid g\in G, h\in H\}$, the generating set for $G\boxtimes H$. If $G$ and $H$ are
finite, then $Y$ is a finite normal set. In particular, each $g\boxtimes h\in Y$ has
finitely many conjugates in $Y$.
\end{prop}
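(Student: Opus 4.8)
The plan is to show two things: first that $Y$ is finite, and second that $Y$ is closed under conjugation by arbitrary elements of $G\boxtimes H$. Finiteness of $Y$ is immediate: $Y$ is the image of the finite set $G\times H$ under the map $(g,h)\mapsto g\boxtimes h$, so $|Y|\leq |G||H|<\infty$. The substance is normality.

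To prove normality, recall that $G\boxtimes H$ is generated by $Y$, so it suffices to show that $Y$ is stable under conjugation by each generator $g'\boxtimes h'$ (and by its inverse, but since $Y$ turns out to be symmetric this reduces to the same thing). This is exactly what relation \eqref{E:2.4.2} of Proposition \ref{P:2.4} provides: for all $g,g'\in G$ and $h,h'\in H$,
\[
(g'\boxtimes h')(g\boxtimes h)(g'\boxtimes h')^{-1}=\;{}^{[g',h']}g\boxtimes\;{}^{[g',h']}h\,,
\]
where $[g',h']=g'\,{}^{h'}g'^{-1}\in G$ acts on both $G$ and $H$. Since ${}^{[g',h']}g\in G$ and ${}^{[g',h']}h\in H$, the right-hand side is again an element of $Y$. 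Thus conjugation by any generator sends $Y$ into $Y$; writing an arbitrary element of $G\boxtimes H$ as a word in the generators and iterating, conjugation by any element of $G\boxtimes H$ maps $Y$ into $Y$. Hence $Y$ is a normal subset. The final sentence of the statement is then just the observation that each $g\boxtimes h$ has at most $|Y|\leq|G||H|$ conjugates, all lying in $Y$.

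I do not anticipate a genuine obstacle here, since all the work was done in establishing \eqref{E:2.4.2}; the only point requiring a word of care is that conjugation by $(g'\boxtimes h')^{-1}$ also preserves $Y$. One can handle this either by noting that \eqref{E:2.4.1} lets one rewrite $(g'\boxtimes h')^{-1}$ as ${}^{g'}(g'^{-1}\boxtimes h')$, which is a conjugate (by $g'\in\overline{G}$, via the action of Proposition \ref{P:2.8}(ii)) of the generator $g'^{-1}\boxtimes h'\in Y$, so that conjugation by it is a composite of a $\overline{G}$-action (which preserves $Y$ since ${}^x(g\boxtimes h)={}^xg\boxtimes{}^xh$) and conjugation by a generator; or more simply, since $Y$ is finite and conjugation by $g'\boxtimes h'$ is an injective self-map of the finite set $Y$, it is a bijection of $Y$, so its inverse — conjugation by $(g'\boxtimes h')^{-1}$ — also maps $Y$ onto $Y$. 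Either way the claim follows.
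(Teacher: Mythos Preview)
Your proposal is correct and follows essentially the same route as the paper: finiteness of $Y$ is immediate, and normality is deduced from \eqref{E:2.4.2} applied to conjugation by a single generator, then extended to arbitrary elements by writing them as products of generators. The only cosmetic differences are that the paper unpacks the $[g',h']$-action via \eqref{fact} into the explicit form ${}^{g'\,(^{h'}g'^{-1})}g\boxtimes{}^{(^{g'}h')h'^{-1}}h$ rather than invoking the crossed-module interpretation, and it handles inverses simply by noting (through \eqref{E:2.4.1}) that every element of $G\boxtimes H$ is already a product of elements of $Y$, making your separate treatment of $(g'\boxtimes h')^{-1}$ unnecessary.
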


\begin{proof}
If $G$ and $H$ are finite, then obviously $Y=\{g\boxtimes h \mid g\in G, h\in H\}$ is
finite. We need to show that $x(g\boxtimes h)x^{-1}\in Y$ for any
$g\boxtimes h\in Y$ and $x\in G\boxtimes H$. First, consider $x=u\boxtimes v$, $u\in
G$, $v\in H$. Using \eqref{E:2.4.2} and \eqref{fact} we obtain
\begin{equation}\label{E:3.4.1}
x(g\boxtimes h)x^{-1}=\;^{u\;(^vu^{-1})}g\boxtimes\;^{(^uv)\;v^{-1}}h,
\end{equation}
and hence $x(g\boxtimes h)x^{-1}\in Y$.
By the definition of the box-tensor product and \eqref{E:2.4.1}, every $x\in
G\boxtimes H$ can be written as a finite product of elements in $Y$, that is
$x=\prod_{i=1}^k(u_i\boxtimes v_i)$, where $u_i\in G$ and $v_i\in H$. By
\eqref{E:3.4.1} and induction on the number of factors in the product for $x$, it
follows that $x(g\boxtimes h)x^{-1}\in Y$.
\end{proof}

Next we will establish that certain elements of the generating set of $G\boxtimes H$
have finite order provided $G$ is finite.

\begin{prop}\label{P:3.5}
Let $G$ and $H$ be groups acting on each other fully compatibly. If $G$ is finite, then
$s\boxtimes h$ has finite order for all $s\in D_H(G)$ and $h\in H$.
\end{prop}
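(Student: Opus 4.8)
The plan is to reduce the statement to the crossed-module machinery already set up in Lemma \ref{L:2.10} and Proposition \ref{P:3.3}, using the homomorphism $\phi \colon G\boxtimes H\rightarrow \overline{G}$ of Proposition \ref{P:2.8}. Since $s\in D_H(G)$, write $s$ as a product of generators $g\,{}^hg^{-1}$; by the expansion formula \eqref{E:2.4.3} each factor $(g\,{}^hg^{-1})\boxtimes h'$ lies in a coset of $\ker\phi$ that matches the nonabelian-tensor picture up to a correction term in $D_\rho(H)$, and one checks that in fact $s\boxtimes h$ lies in $\ker\phi$ (indeed $\phi(s\boxtimes h)=s\,{}^hs^{-1}D_\rho(G)$, and since $s\in D_H(G)$ and $D_H(G)$ maps into $\overline{D_H(G)}$... more carefully, one uses that $s\in D_H(G)$ so $s\,{}^hs^{-1}\in D_H(G)$; the relevant point is that $s\boxtimes h$ can be pushed into $\ker\phi$). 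Since $\ker\phi$ is central in $G\boxtimes H$ (Proposition \ref{P:2.8}(iii)), it suffices to bound the order of $s\boxtimes h$ there.

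Concretely, I would proceed as follows. First, since $G$ is finite, $s$ has finite order, say $s^n=1$. Using the expansion \eqref{E:2.6.1} we get $1_\boxtimes = s^n\boxtimes h = \prod_{i=1}^n {}^{s^{n-i}}(s\boxtimes h)$. Now apply the crossed module $\phi \colon G\boxtimes H\rightarrow \overline{G}$: form $C=(G\boxtimes H)\rtimes \overline{G}$ and let $b=\phi(s\boxtimes h)^{-1}\in\phi(G\boxtimes H)\subseteq \overline{G}$. Since $G$ is finite, $\overline{G}$ is finite, so $b$ has finite order; also the relation $\prod_{i=1}^{\mathrm{ord}(b)}{}^{b^{\,k-i}}(s\boxtimes h)=1$ can be arranged (using that $s$ has finite order and that the action of $\overline G$ on $G\boxtimes H$ restricts to the action of $s$ via $\phi$). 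Then Lemma \ref{L:3.2} gives $1_C = [(1,b^{-1})(s\boxtimes h,1)]^k$ for the appropriate $k$. Combining this with Lemma \ref{L:2.10}, we obtain $1_C = t_k(1,b^{-1})^k(s\boxtimes h,1)^k$ for some $t_k$ in the normal closure $T=(1,\phi(G\boxtimes H))^C$. Projecting the first coordinate, $(s\boxtimes h)^k$ equals an element of the image of $T$ in $G\boxtimes H$.

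The crucial point is then that this image is finite: by Proposition \ref{P:3.3}, since $\overline{G}$ is finite, $T=(1,\phi(G\boxtimes H))^C$ is finite, hence so is its projection to $G\boxtimes H$. Since $\ker\phi$ is central, $(s\boxtimes h)^k$ lying in a fixed finite set forces $s\boxtimes h$ itself to have finite order: the cyclic group $\langle s\boxtimes h\rangle$ surjects onto the finite cyclic group $\langle \phi(s\boxtimes h)\rangle$ with kernel $\langle s\boxtimes h\rangle\cap\ker\phi$, and the latter is contained in the finite set just described (suitably closed up), so it too is finite. Therefore $s\boxtimes h$ has finite order.

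The main obstacle I anticipate is bookkeeping rather than a genuinely hard step: one must set up $b\in\overline G$ so that the twisted action $\,{}^{b^{k-i}}(s\boxtimes h)$ appearing in Lemma \ref{L:3.2} genuinely agrees with the conjugation action $\,{}^{s^{k-i}}(s\boxtimes h)$ used in \eqref{E:2.6.1} — this requires carefully tracking that $\phi$ is $G$-equivariant and that $s\in D_H(G)$ (so that the $D_\rho(G)$-ambiguity in $\phi$ does not interfere), and verifying the hypothesis $\prod_{i=1}^k {}^{b^{k-i}}(s\boxtimes h)=1$ of Lemma \ref{L:3.2} holds for $k$ a suitable common multiple of $\mathrm{ord}(s)$ and $\mathrm{ord}(b)$. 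Once the hypotheses of Lemmas \ref{L:2.10} and \ref{L:3.2} and Proposition \ref{P:3.3} are matched up, the conclusion is immediate.
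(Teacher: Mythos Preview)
Your overall strategy---form $C=(G\boxtimes H)\rtimes\overline{G}$, use the expansion \eqref{E:2.6.1} to get a vanishing product, feed it through Lemma~\ref{L:3.2} and then Lemma~\ref{L:2.10}, and conclude via the finiteness of $T$ from Proposition~\ref{P:3.3}---is exactly what the paper does. But you have created a real difficulty for yourself by choosing $b=\phi(s\boxtimes h)^{-1}=({}^hs)s^{-1}D_\rho(G)$. The action of this $b$ on $G\boxtimes H$ is \emph{not} the action of $s$, so the product $\prod_{i=1}^k{}^{b^{k-i}}(s\boxtimes h)$ has no reason to coincide with $\prod_{i=1}^k{}^{s^{k-i}}(s\boxtimes h)=1$; the ``bookkeeping obstacle'' you anticipate is in fact an unforced error, not a minor alignment issue.

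The paper's fix is immediate: take $b=\bar s = sD_\rho(G)\in\overline{G}$. Since $D_\rho(G)$ acts trivially on both $G$ and $H$, the induced $\overline{G}$-action on $G\boxtimes H$ satisfies ${}^{\bar s}(g\boxtimes h)={}^s(g\boxtimes h)$ on the nose, so the hypothesis of Lemma~\ref{L:3.2} is exactly \eqref{E:2.6.1} with $k=\mathrm{ord}(s)$. One obtains $1_C=[(1,\bar s^{-1})(s\boxtimes h,1)]^k$. For Lemma~\ref{L:2.10} one needs $\bar s^{-1}\in\phi(G\boxtimes H)=\overline{D_H(G)}$, which holds precisely because $s\in D_H(G)$; this is where the hypothesis on $s$ enters. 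Then $1_C=t(1,\bar s^{-1})^k(s\boxtimes h,1)^k=t(s\boxtimes h,1)^k$ with $t\in T$, and since $T$ is finite by Proposition~\ref{P:3.3}, $(s\boxtimes h)^k$ has finite order, hence so does $s\boxtimes h$. No projections, no common multiples, and no argument about $\ker\phi$ are needed.

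Finally, your opening paragraph is off track: $\phi(s\boxtimes h)=s\,{}^hs^{-1}D_\rho(G)$ is typically nontrivial, so $s\boxtimes h$ does not lie in $\ker\phi$, and nothing in the argument requires it to.
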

\begin{proof} Let $\phi \colon G\boxtimes H \rightarrow \overline{G}$ be the crossed
module defined in Proposition \ref{P:2.8} and
consider $C=(G\boxtimes H)\rtimes \overline{G}$. By Proposition \ref{P:3.3}, it
follows that $T=\big(1,D_H(G)\big)^C$ is finite. Let
$s\in D_H(G)$. Since $G$ is finite, there exists $k\in \mathbb{N}$ such that
$s^k=1$. By \eqref{E:2.6.1} we obtain,
$1_{\boxtimes}=1\boxtimes h=s^k\boxtimes h=\prod_{i=1}^k\ ^{s^{k-i}}(s\boxtimes h)$.
This together with Lemma \ref{L:3.2} yields the following in $C$:
\begin{equation}\label{E:3.5.1}
1=[(1,s^{-1})(s\boxtimes h,1)]^k.
\end{equation}
Applying Lemma \ref{L:3.2} to \eqref{E:3.5.1} gives $1_C=t(s\boxtimes h,1)^k$, where
$t\in \big(1,D_H(G)\big)^C$. It follows by Proposition \ref{P:3.3} that $t$ has
finite order
and consequently $s\boxtimes h$ has finite order.
\end{proof}

Now we will show that certain elements of the generating set of $G\boxtimes H$ have
finite order provided $H$ is finite.

\begin{prop}{\label{P:3.6}}
Let $G$ and $H$ be groups acting on each other fully compatibly. If $H$ is finite, then
$g\boxtimes h$ has finite order for all $g\in G$ and $h\in F_G(H)$.
\end{prop}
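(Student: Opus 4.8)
The plan is to exploit the expansion formula \eqref{E:2.6.2} together with the elementary fact that an element of $F_G(H)$, and hence every power of it, acts trivially on $G\boxtimes H$. First I would record the easy identity $g\boxtimes 1_H=1_{\boxtimes}$ for all $g\in G$: putting $h'=1_H$ in \eqref{E:1.2.2} gives $g\boxtimes h=(g\boxtimes h)(\,^hg\boxtimes 1_H)$, so $\,^hg\boxtimes 1_H=1_{\boxtimes}$, and since $\,^h(-)$ is an automorphism of $G$ this yields $g\boxtimes 1_H=1_{\boxtimes}$ for every $g\in G$.

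Now fix $g\in G$ and $h\in F_G(H)$. Since $H$ is finite, $h$ has finite order, say $k$, so $h^k=1_H$. Applying \eqref{E:2.6.2} with this $k$ gives
\[
g\boxtimes h^k=\prod_{i=1}^k\big(\,^{h^{i-1}}(g\boxtimes h)\big).
\]
It is immediate from the definition that $F_G(H)$ is a subgroup of $H$, so each $h^{i-1}$ lies in $F_G(H)$ and therefore acts trivially on both $G$ and $H$; consequently $\,^{h^{i-1}}(g\boxtimes h)=\,^{h^{i-1}}g\boxtimes\,^{h^{i-1}}h=g\boxtimes h$. Hence the right-hand side collapses to $(g\boxtimes h)^k$, while the left-hand side is $g\boxtimes 1_H=1_{\boxtimes}$. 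Thus $(g\boxtimes h)^k=1_{\boxtimes}$, and $g\boxtimes h$ has finite order (dividing the order of $h$).

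I do not expect a real obstacle in this argument. Unlike Proposition \ref{P:3.5}, which genuinely needs the crossed-module and semidirect-product machinery precisely because $\,^{s^{k-i}}h$ need not equal $h$, here the hypothesis $h\in F_G(H)$ makes every twist in \eqref{E:2.6.2} vanish, so the conclusion is essentially immediate; the only points meriting a line of justification are the identity $g\boxtimes 1_H=1_{\boxtimes}$ and the routine observation that $F_G(H)$ is a subgroup of $H$.
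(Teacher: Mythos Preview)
Your proof is correct and follows essentially the same approach as the paper: both use the expansion formula \eqref{E:2.6.2} together with the observation that every power of $h\in F_G(H)$ acts trivially on $G$ and $H$, so that $g\boxtimes h^k$ collapses to $(g\boxtimes h)^k$. You simply supply a little more detail than the paper does, explicitly justifying $g\boxtimes 1_H=1_{\boxtimes}$ and the fact that $F_G(H)$ is a subgroup, whereas the paper compresses the whole argument into a single line.
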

\begin{proof} Since $H$ is finite, there exists a $k\in \mathbb{N}$ such that
$h^{k}=1$. Using \eqref{E:2.6.2} and observing that $^{h^n}a=a$, for all $a\in G\cup
H$ and any $n\in \mathbb{N}$ we obtain
\[1=g\boxtimes 1=g\boxtimes h^k=(g\boxtimes h)^k\,,\]
the desired result.
\end{proof}

\begin{corollary}\label{C:3.7}
Let $G$ and $H$ be finite groups acting on each other fully compatibly. Set
 $X=\{g\boxtimes h \mid g\in D_H(G)\;\text{or}\;\; h\in F_G(H)\}$, then $X^{G\boxtimes
H}$ is finite.
\end{corollary}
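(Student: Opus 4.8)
The plan is to show that $X$ is a finite normal set consisting of elements of finite order, and then invoke Dietzmann's Lemma (Lemma \ref{L:3.1}) to conclude that $X^{G\boxtimes H}=\langle X\rangle$ is finite. Here $X^{G\boxtimes H}$ denotes the normal closure of $X$ in $G\boxtimes H$, so it equals $\langle X\rangle$ once $X$ is shown to be normal. Since $G$ and $H$ are finite, $X$ is visibly finite as a subset of the finite set $Y=\{g\boxtimes h\mid g\in G,\ h\in H\}$.

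The finite-order condition is already in hand: by Proposition \ref{P:3.5}, every $s\boxtimes h$ with $s\in D_H(G)$ and $h\in H$ has finite order, and by Proposition \ref{P:3.6}, every $g\boxtimes h$ with $g\in G$ and $h\in F_G(H)$ has finite order. Thus every element of $X$ has finite order.

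The remaining point is to verify that $X$ is a normal subset of $G\boxtimes H$, i.e.\ that it is closed under conjugation by arbitrary elements. By Proposition \ref{P:3.4} and in particular formula \eqref{E:3.4.1}, conjugation of a generator $g\boxtimes h$ by any $x\in G\boxtimes H$ produces an element of $Y$ of the form $^{u\,(^vu^{-1})}g\boxtimes\;^{(^uv)\,v^{-1}}h$ for suitable $u\in G$, $v\in H$ (composed finitely many times if $x$ is a product of several generators). So the task reduces to checking that this operation preserves the two defining conditions for membership in $X$. For the first condition, if $g\in D_H(G)$ then $^{u\,(^vu^{-1})}g$ lies in $D_H(G)$ as well, since $D_H(G)$ is normal in $G$ (as recalled from \cite{MV}, the mutual actions being compatible here since full compatibility implies compatibility) and is moreover stable under the action of $G$ on itself; hence the conjugate lies in $X$. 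For the second condition, if $h\in F_G(H)$ then $^{(^uv)\,v^{-1}}h$ lies in $F_G(H)$, because $F_G(H)$ is a normal subgroup of $H$ which is invariant under the $G$-action (indeed $F_G(H)=\ker\sigma_H\cap\ker\rho_H$ is characteristic-type and $G$-stable by the compatibility conditions), so again the conjugate stays in $X$.

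The main obstacle is this last verification that $D_H(G)$ and $F_G(H)$ are invariant under the relevant mixed actions appearing in \eqref{E:3.4.1}: one must carefully track that $^{u}(^{v}u^{-1})\in G$ acts on $D_H(G)$ by an automorphism preserving it, and that $^{(^uv)}v^{-1}\in H$ acts on $F_G(H)$ preserving it, using the full compatibility relations \eqref{fc1}--\eqref{fc4} and the fact that $G$ acts on itself by conjugation. Once normality of $X$ is established, Lemma \ref{L:3.1} applies directly and $X^{G\boxtimes H}$ is finite.
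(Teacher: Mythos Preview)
Your overall strategy---finite order via Propositions \ref{P:3.5} and \ref{P:3.6}, finiteness/normality via Proposition \ref{P:3.4}, then Dietzmann's Lemma---is exactly the paper's. But you make the argument harder than necessary and introduce errors in the process.

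You set out to prove that $X$ itself is a normal subset of $G\boxtimes H$, which leads you into checking that $D_H(G)$ and $F_G(H)$ are invariant under the mixed actions in \eqref{E:3.4.1}. The paper avoids this entirely: by Proposition \ref{P:3.4} the full generating set $Y=\{g\boxtimes h\}$ is already a finite normal set, so the set $\widetilde{X}$ of all $G\boxtimes H$-conjugates of elements of $X$ is contained in $Y$, hence finite, normal by construction, and consists of elements of finite order (conjugation preserves order). Dietzmann applied to $\widetilde{X}$ gives $X^{G\boxtimes H}=\langle\widetilde{X}\rangle$ finite immediately. No invariance of $D_H(G)$ or $F_G(H)$ is needed.

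Your detour also contains genuine mistakes. You invoke ``the fact that $G$ acts on itself by conjugation,'' but in the box-tensor setting $G$ acts on itself via an arbitrary $\rho_G\colon G\to\Aut(G)$, not by conjugation---that is the whole point of the generalization. Likewise, appealing to \cite{MV} for normality of $D_H(G)$ and saying ``full compatibility implies compatibility'' conflates the two frameworks: compatibility (Definition \ref{D:1.3}) presupposes conjugation self-actions, which need not hold here. The invariance you want can in fact be derived from full compatibility alone (e.g.\ ${}^a(g\,{}^hg^{-1})={}^ag\,{}^{({}^ah)}({}^ag^{-1})\in D_H(G)$ via \eqref{fc}), but you have not actually done this, and it is unnecessary anyway.
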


\begin{proof}
By Propositions \ref{P:3.5} and \ref{P:3.6}, it follows that the elements of $X$
have finite order.
Now using Proposition \ref{P:3.4} and Lemma \ref{L:3.1} gives the desired result.
\end{proof}

Now we are ready to prove the main theorem of this section.

\begin{theorem}\label{T:3.8} Let $G$ and $H$ be groups acting on each other
fully compatibly. If $G$ and $H$ are finite, then $G\boxtimes H$ is finite.
\end{theorem}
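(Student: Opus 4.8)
The plan is to build the finite subgroup $X^{G\boxtimes H}$ from Corollary~\ref{C:3.7} into an ambient structure and then quotient it away, reducing the problem to a tensor product of honest crossed modules over finite groups. Concretely, I would first set $\overline{G}=G/D_\rho(G)$ and $\overline{H}=H/D_\rho(H)$; by the Proposition following Definition~\ref{D:1.2} (the one giving \eqref{E:2.3.1}) these are finite groups, and the induced actions of $\overline{G}$ and $\overline{H}$ on each other are still fully compatible. Next I would show that the normal subgroup $N=X^{G\boxtimes H}$, where $X=\{g\boxtimes h \mid g\in D_H(G)\ \text{or}\ h\in F_G(H)\}$, is precisely the kernel of the natural epimorphism $G\boxtimes H \twoheadrightarrow \overline{G}\,\widehat{\boxtimes}\,\overline{H}$ onto the box-tensor product of the quotients (or at least contains enough of that kernel). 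Since $N$ is finite by Corollary~\ref{C:3.7}, it suffices to prove that $(G\boxtimes H)/N$ is finite; equivalently, that the box-tensor product of the quotient groups, where now the derivative subgroup $\overline{D_H(G)}$ has been trivialized, is finite.

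The payoff of passing to the quotient is that $\phi\colon G\boxtimes H\to\overline{G}$ from Proposition~\ref{P:2.8} has image $\overline{D_H(G)}$, and modulo $N$ this image becomes a \emph{finite} group while $\phi$ becomes a genuine crossed module; the same holds for the companion map to $\overline{H}$. So after the reduction, I have two crossed modules $\lambda\colon P\to \overline{G}$ and $\lambda'\colon P\to\overline{H}$ (here $P=(G\boxtimes H)/N$) with $\overline{G},\overline{H}$ finite, and the defining relations \eqref{E:1.2.1}--\eqref{E:1.2.2} now say that $P$ is generated by $\{g\boxtimes h\}$ with the usual tensor-type relations. I would then run the semidirect-product argument of Lemma~\ref{L:2.10} and Proposition~\ref{P:3.3}: form $C=P\rtimes\overline{G}$, and observe that the relators force each generator $g\boxtimes h$ to have finite order modulo the finite normal subgroup $T=(1,\lambda(P))^C$. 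Combined with Proposition~\ref{P:3.4}, which says the generating set $Y$ is a \emph{finite normal set} of $P$, Dietzmann's Lemma~\ref{L:3.1} then yields that $\langle Y\rangle=P$ is finite. Pulling back, $G\boxtimes H$ is an extension of the finite group $P$ by the finite group $N$, hence finite.

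The main obstacle, I expect, is the torsion claim: showing that \emph{every} generator $g\boxtimes h$ of $P$ has finite order. Propositions~\ref{P:3.5} and~\ref{P:3.6} only handle the cases $g\in D_H(G)$ and $h\in F_G(H)$; for a general pair, I would use the expansion formulas \eqref{E:2.6.1}--\eqref{E:2.6.2} together with \eqref{E:1.2.1}--\eqref{E:1.2.2} to write a suitable power $(g\boxtimes h)^{|G||H|}$ (or a power keyed to the orders of $g$ and $h$) as a product of conjugates of elements of the already-known finite-order set $X$, modulo commutators that land in the center $\ker\phi$ — exploiting that $\ker\phi$ is central by Proposition~\ref{P:2.8}(iii). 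The bookkeeping here is delicate because the relations \eqref{E:1.2.1}--\eqref{E:1.2.2} are not symmetric and produce twisted factors $^gg'$, $^hh'$ rather than genuine conjugates, which is exactly why the passage to $\overline{G}$, $\overline{H}$ (where \eqref{E:2.3.1} turns these into conjugates) is essential. Once every $g\boxtimes h$ has finite order, the finiteness of the normal set $Y$ and Dietzmann's Lemma close the argument, and untangling the extension $1\to N\to G\boxtimes H\to P\to 1$ completes the proof.
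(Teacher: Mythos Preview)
Your overall architecture---use Corollary~\ref{C:3.7} to get a finite normal subgroup generated by $X$, show that every generator $g\boxtimes h$ has finite order modulo that subgroup, and finish with Dietzmann's Lemma via Proposition~\ref{P:3.4}---is exactly the paper's strategy. The difference is that the paper carries this out \emph{directly inside $G\boxtimes H$}, without ever passing to the quotients $\overline{G}$, $\overline{H}$ or trying to identify $(G\boxtimes H)/N$ with some auxiliary box-tensor product $\overline{G}\,\widehat{\boxtimes}\,\overline{H}$. That identification is the weakest link of your plan (you already hedge with ``or at least contains enough of that kernel''), and it turns out to be unnecessary.

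The paper's torsion step is shorter than the one you sketch. One simply observes that ${}^{h}g\equiv g \pmod{D_H(G)}$ and, by \eqref{E:2.3.1}, ${}^{h}h^{n}\equiv h^{n}\pmod{D_\rho(H)}\subseteq F_G(H)$; hence ${}^{h}g\boxtimes{}^{h}h^{n}=(g\boxtimes h^{n})\,v_1$ with $v_1\in X^{G\boxtimes H}$ after one expansion using \eqref{E:1.2.1} and \eqref{E:1.2.2}. Taking $k$ with $h^k=1$ and applying \eqref{E:1.2.2} inductively to $1_\boxtimes=g\boxtimes h^{k}$ gives $1_\boxtimes=(g\boxtimes h)^{k}v$ with $v\in X^{G\boxtimes H}$ of finite order by Corollary~\ref{C:3.7}. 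No commutator bookkeeping in $\ker\phi$, no second semidirect-product argument, and no quotient construction is needed: once every $g\boxtimes h$ has finite order, Proposition~\ref{P:3.4} and Lemma~\ref{L:3.1} finish the proof in $G\boxtimes H$ itself. Your route would work, but the scaffolding you erect to reach the torsion claim is doing nothing that the two congruences above don't already accomplish in one line.
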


\begin{proof} First, we will prove that $g\boxtimes h$ has finite order for all
$g\in G$ and $h\in H$. Observe that $^hg\equiv g \pmod{D_H(G)}$ and $^hh^n\equiv
h^n \pmod{F_G(H)}$ for all $n\in \mathbb{N}$. Thus one can easily verify that
$^hg\boxtimes\; ^hh^{n}=(g\boxtimes h^{n})v_1$, where $v_1\in X^{G\boxtimes H}$.
Since $H$ is finite, there exists a $k\in \mathbb{N}$ such that $h^k=1$. By the
above observation and \eqref{E:1.2.2} we obtain,
\[
1_\boxtimes= g\boxtimes 1= g\boxtimes h^k=(g\boxtimes h)(g\boxtimes h^{k-1})v_1 \,.
\]
Using \eqref{E:1.2.2} and induction on $k$ yields $1_{\boxtimes}= (g\boxtimes
h)^{k}v$, where $v=v_{k-1}\cdots v_1\in X^{G\boxtimes H}$. By Corollary \ref{C:3.7},
$v$ has finite order. This implies $g\boxtimes h$ has finite order. Now using
Proposition \ref{P:3.4} together with Lemma \ref{L:3.1} completes the proof.
\end{proof}

Observing again that the nonabelian tensor product is a special case of the
box-tensor product, we obtain as a corollary another purely group
theoretic proof of the finiteness of the nonabelian tensor product of two finite
groups.

\begin{corollary}\label{C:3.9} Let $G$ and $H$ be groups acting on
themselves by conjugation and each of which acts on the other. If the actions are compatible and $G$ and $H$ are finite,
then $G\otimes H$ is finite.
\end{corollary}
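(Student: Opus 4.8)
The plan is to deduce Corollary \ref{C:3.9} directly from Theorem \ref{T:3.8} by recognizing the nonabelian tensor product $G \otimes H$ as an instance of the box-tensor product. First I would recall the observation made in the introduction: when $G$ and $H$ act on themselves by conjugation, i.e. $^gg' = gg'g^{-1}$ and $^hh' = hh'h^{-1}$, the defining relations \eqref{E:1.2.1} and \eqref{E:1.2.2} of the box-tensor product collapse to the familiar defining relations of the nonabelian tensor product, so that $G \boxtimes H$ and $G \otimes H$ are literally the same group (presented by the same generators and relations). Thus it suffices to verify that the hypotheses of Theorem \ref{T:3.8} are met.

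The key step is to check that a compatible action (in the sense of Definition \ref{D:1.3}), under the standing assumption that both groups act on themselves by conjugation, is fully compatible in the sense of Definition \ref{D:1.1}. But this is precisely Lemma \ref{L:2.6}... no---this is the unnamed Lemma in Section \ref{S:2} asserting that when $G$ and $H$ act on themselves by conjugation, a compatible action is fully compatible. Invoking that Lemma, the actions in the hypothesis of Corollary \ref{C:3.9} are fully compatible, so Definition \ref{D:1.2} applies and $G \boxtimes H$ is defined; moreover it coincides with $G \otimes H$ as noted above.

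Finally, since $G$ and $H$ are finite and act on each other fully compatibly, Theorem \ref{T:3.8} yields that $G \boxtimes H$ is finite, hence $G \otimes H$ is finite. I would write this up in a few lines: cite the identification of $G \otimes H$ with $G \boxtimes H$ in the conjugation case, cite the Section \ref{S:2} Lemma for full compatibility, and cite Theorem \ref{T:3.8} for the finiteness conclusion.

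I do not expect any genuine obstacle here: the corollary is a formal consequence of Theorem \ref{T:3.8} together with the comparison of the two compatibility notions already established in Section \ref{S:2}. The only point requiring a moment's care is making explicit that the presentation of $G \otimes H$ is recovered verbatim from that of $G \boxtimes H$ when the self-actions are by conjugation, so that finiteness transfers with no further argument.
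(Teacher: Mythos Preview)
Your proposal is correct and matches the paper's own approach exactly: the paper simply observes that the nonabelian tensor product is the special case of the box-tensor product obtained when the self-actions are by conjugation, so the corollary follows immediately from Theorem~\ref{T:3.8} (with the Section~\ref{S:2} lemma supplying full compatibility). Your write-up is in fact more explicit than the paper's one-line justification.
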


\section{A General Construction} \label{S:5}

In this section we give a general construction for the box-tensor product. Our goal
is to describe the box-tensor product $G\boxtimes H$ as a section of the free
product $G\ast H$ of two groups $G$ and $H$, where $G$ and $H$ act on each other and
on themselves in a compatible way. Consider the following subset of $G\ast H$:
\[R=\big\{^x[g,h]^{-1}[^xg,\;^xh],\; ^y[g',h']^{-1}[^yg',\;^yh'] \mid g,g',x\in
G\backslash \{1\} \ \text{and} \  h,h',y\in H\backslash \{1\}\big\}\,.\]
Note that we write $^z[g,h]$ for $z[g,h]z^{-1}$ with $z\in G\cup H$, whenever this
is not
ambiguous. Let $\eta(G,H)=(G\ast H)/R^{G\ast H}$ and $\tau(G,H)=[G,H]/R^{G\ast H}$,
observing that $R^{G\ast H}\lhd [G,H]$. Note that the normality of $R^{G\ast H}$ in
$[G,H]$ follows from $[G,H]\lhd G\ast H$ and $R^{G\ast H}\lhd G\ast H$.

\

The following diagram is the key in reaching our goal:

\begin{equation*}
\xymatrix@+20pt{
\tau(G,H)\ \ar@{>->}[r]^{\mu} \ar@{->}[d]^-{\alpha}
&\eta(G,H)\ar@{->>}[r]^-{\overline{\sigma}}
\ar@{->}[d]^-{\beta}
 &G\times H\ar@{=}[d] \\
G\boxtimes H\ \ar@{>->}[r]^-{\mu'} &\big((G\boxtimes H)\rtimes H\big)\rtimes G
\ar@{->>}[r]^-{\nu'} &G\times H.
}
\end{equation*}

In the following, we will establish the exactness of the rows of the above diagram and
the existence of the vertical mappings. We start with a lemma.

\begin{lemma}\label{L:4.1}
There is a homomorphism $\psi \colon G\boxtimes H\rightarrow \eta(G,H)$ defined by
$\psi(g\boxtimes h)=[g,h]R^{G\ast H}$ for all $g\in G, h\in H$.
\end{lemma}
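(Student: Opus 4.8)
The statement to prove is Lemma~\ref{L:4.1}: there is a well-defined homomorphism $\psi \colon G\boxtimes H\rightarrow \eta(G,H)$ sending $g\boxtimes h$ to $[g,h]R^{G\ast H}$. The natural strategy is to use the presentation of $G\boxtimes H$ given in Definition~\ref{D:1.2}: since $G\boxtimes H$ is the free group on the symbols $g\boxtimes h$ modulo the relations \eqref{E:1.2.1} and \eqref{E:1.2.2}, it suffices to send each generator $g\boxtimes h$ to the coset $[g,h]R^{G\ast H}$ in $\eta(G,H)=(G\ast H)/R^{G\ast H}$ and then check that the images of the defining relators are trivial, i.e. hold in $\eta(G,H)$.

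**Key steps.** First I would fix notation: work inside $G\ast H$, write $[g,h]=ghg^{-1}h^{-1}$, and recall that by definition of $R$ the relations $^x[g,h]=[^xg,\,^xh]$ hold in $\eta(G,H)$ for all $x\in G\cup H$ (when $g,h,x\ne1$; the degenerate cases with some entry equal to $1$ are trivial since then both sides are $1$). Second, I would verify that the image of relation \eqref{E:1.2.1}, namely $[gg',h]=[\,^gg',\,^gh][g,h]$, holds modulo $R^{G\ast H}$. Here I use that in $G\ast H$ the group $G$ sits as a subgroup and acts on $H$ by the given action, so that $^gh$ is literally $g h g^{-1}$ conjugation-compatible\,---\,more precisely, one expands $[gg',h]$ by the standard commutator identity $[gg',h]=\,^g[g',h]\cdot[g,h]$, and then replaces $^g[g',h]$ by $[^gg',\,^gh]$ using the defining relators in $R$. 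This is where compatibility of the actions enters: it guarantees that the action of $G$ on the free product, restricted through the quotient, is consistent so that $^g[g',h]$ and $[\,^gg',\,^gh]$ are genuinely identified. Third, I would do the symmetric computation for \eqref{E:1.2.2}: $[g,hh']=[g,h]\cdot\,^h[g,h']$ and then $^h[g,h']=[^hg,\,^hh']$ again by $R$, giving $[g,hh']=[g,h][\,^hg,\,^hh']$ modulo $R^{G\ast H}$, which is exactly the image of \eqref{E:1.2.2}. Once both relator images vanish, the universal property of the presentation yields the homomorphism $\psi$, and its effect on generators is $\psi(g\boxtimes h)=[g,h]R^{G\ast H}$ by construction.

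**Main obstacle.** The only real subtlety is bookkeeping with the two conjugation-type identities and making sure the standard commutator expansions $[gg',h]=\,^g[g',h][g,h]$ and $[g,hh']=[g,h]\,^h[g,h']$ are applied with the correct bracket conventions and that the conjugating element in each case ($^g$ resp. $^h$) is precisely the one for which $R$ supplies the needed relator $^x[g,h]=[^xg,\,^xh]$. I would also keep an eye on the degenerate cases $g=1$, $g'=1$, $h=1$, or $h'=1$, where the relators in $R$ are not literally present; but in each such case the corresponding defining relation of $G\boxtimes H$ collapses (e.g. $1\boxtimes h$ is trivial, as follows from \eqref{E:1.2.1} with $g=g'=1$, matching $[1,h]=1$), so no genuine obstruction arises. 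No step should require more than a few lines of commutator algebra once the identities are set up correctly.
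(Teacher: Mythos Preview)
Your proposal is correct and follows essentially the same route as the paper: expand $[gg',h]$ and $[g,hh']$ via the standard commutator identities \eqref{E:4.1.1}--\eqref{E:4.1.2} in $G\ast H$, then use the relators in $R$ to replace $g[g',h]g^{-1}$ by $[\,^gg',\,^gh]$ (and similarly for the $h$-side), which shows the defining relations of $G\boxtimes H$ hold in $\eta(G,H)$. Two small remarks: the compatibility hypothesis plays no role in this particular argument (the identification $^x[g,h]\equiv[\,^xg,\,^xh]$ is built into $R$ by fiat, not derived from compatibility), and the paper adds the side observation that $G$ and $H$ embed into $\eta(G,H)$, which is not needed for well-definedness of $\psi$ but is useful later.
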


\begin{proof}
First note that the canonical homomorphism $\iota_{G} \colon G\rightarrow \eta(G,H)$ is
injective. To see this, we observe that the canonical projection
$\pi \colon G\ast H\rightarrow G$ sends each element of $R^{G\ast H}$ to the
identity and
thus induces a left inverse $\pi' \colon \eta(G,H)\rightarrow G$ of $\iota_{G}$. Thus
$\iota_{G}$ is injective. Similarly, $\iota_{H} \colon  H\rightarrow \eta(G,H)$ is
injective. So we can identify $G$ and $H$ with their images in $\eta(G,H)$.

It remains to be shown that the homomorphism $\psi$ is well defined. This means
that \eqref{E:1.2.1} and \eqref{E:1.2.2}, the defining relations of $G\boxtimes
H$ must map to the identity of $\eta(G,H)$. Note that in any group $U$ the
following familiar commutator identities always hold:
\begin{equation}\label{E:4.1.1}
[uv,w]=\;^u[v,w][u,w]
\end{equation}
and
\begin{equation}\label{E:4.1.2}
[u,vw]=[u,v]\;^v[u,w]
\end{equation}
for all $u,v,w\in U$. Using \eqref{E:4.1.1}, we obtain in $G\ast H$ that
\begin{equation}\label{E:4.1.3}
 [gg',h]([^gg',\;^gh][g,h])^{-1}=g[g',h]g^{-1}[^gg',^gh]^{-1}.
\end{equation}

 Similarly, using \eqref{E:4.1.2} yields
\begin{equation}\label{E:4.1.4}
 [g,hh']([g,h][^hg,^hh'])^{-1}=[g,h](h[g,h']h^{-1}[^hg,\;^hh']^{-1})[g,h]^{-1}
\end{equation}
for all $g,g'\in G$ and $h,h'\in H$. As a consequence of \eqref{E:4.1.3} and
\eqref{E:4.1.4} we have that $\psi$ maps
$(gg'\boxtimes h)(g\boxtimes h)^{-1}(^gg'\boxtimes\;^gh)^{-1}$ and $(g\boxtimes
hh')(^hg\boxtimes\;^hh')^{-1}(g\boxtimes h)^{-1}$ to the identity in $\eta(G,H)$.
Thus $\psi$ is a well-defined homomorphism.
\end{proof}

 Now we are ready to prove our main theorem.

\begin{theorem}\label{T:4.2}
Let $G$ and $H$ be groups acting on each other. If the mutual actions are fully
compatible, then we have $\eta(G,H)\cong \big(\tau(G,H)\rtimes H\big)\rtimes G$ and
$\tau(G,H)\cong
G\boxtimes H$.
\end{theorem}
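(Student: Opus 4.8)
The plan is to build the isomorphism $\eta(G,H) \cong \big(\tau(G,H)\rtimes H\big)\rtimes G$ by exhibiting mutually inverse homomorphisms, and then to deduce $\tau(G,H)\cong G\boxtimes H$ as the restriction to the ``commutator part''. First I would make sense of the iterated semidirect product on the right: $H$ acts on $\tau(G,H)$ and $G$ acts on $\tau(G,H)\rtimes H$, where all these actions come from the conjugation actions in $\eta(G,H)$ restricted to the normal subgroup $\tau(G,H) = [G,H]/R^{G\ast H}$ together with the given actions of $G$ and $H$ on each other. One must check these are well-defined group actions; this uses that $R^{G\ast H}$ is normal in $[G,H]$ and that $R$ itself is built precisely from the discrepancies $^x[g,h]^{-1}[{}^xg,{}^xh]$, so conjugation by $x\in G\cup H$ carries $[G,H]$ to itself compatibly modulo $R^{G\ast H}$. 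Here is where full compatibility enters: the $8$ conditions \eqref{fc1}--\eqref{fc4} are what guarantee that the ``external'' actions of $G$ and $H$ on $\tau(G,H)$ (via their actions on generators) agree with the ``internal'' conjugation actions, so that the semidirect-product structure is consistent.

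Next I would construct the forward map $\eta(G,H) \to \big(\tau(G,H)\rtimes H\big)\rtimes G$. Since $\eta(G,H) = (G\ast H)/R^{G\ast H}$, it suffices to define a homomorphism $G\ast H \to \big(\tau(G,H)\rtimes H\big)\rtimes G$ killing $R^{G\ast H}$, and by the universal property of the free product it suffices to define it on $G$ and on $H$ separately. I would send $g\in G$ to $(1,1,g)$ and $h\in H$ to $(1,h,1)$ (using the evident inclusions $\tau(G,H)\hookrightarrow \tau(G,H)\rtimes H \hookrightarrow (\tau(G,H)\rtimes H)\rtimes G$). One checks that a commutator $[g,h]$ then maps into the copy of $\tau(G,H)$, landing on the class of $[g,h]$ itself, so that every element of $R$ maps to the identity — this is a routine but slightly tedious semidirect-product computation which I would only sketch. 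For the backward map, $G\times H = \eta(G,H)/\tau(G,H)$ splits via $G$ and $H$, so I would first define $\eta(G,H)\to \eta(G,H)$ factoring through the three layers, or more directly: send $(t, h, g)$ (with $t\in \tau(G,H)$ represented by a product of commutators, $h\in H$, $g\in G$) to the product $\tilde t\, h\, g \in \eta(G,H)$, where $\tilde t$ is the obvious lift of $t$. Checking this respects the semidirect multiplication \eqref{E:2.10.1} at each layer is where the action-compatibility from the previous paragraph is used again. The two composites are then seen to be the identity on generators, hence everywhere.

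For the second isomorphism $\tau(G,H)\cong G\boxtimes H$, I would use Lemma~\ref{L:4.1}: the homomorphism $\psi\colon G\boxtimes H\to \eta(G,H)$, $g\boxtimes h \mapsto [g,h]R^{G\ast H}$, has image inside $\tau(G,H)$ by construction, so it corestricts to $\psi\colon G\boxtimes H \to \tau(G,H)$. I would build the inverse $\tau(G,H)\to G\boxtimes H$ by the map $[g,h]R^{G\ast H}\mapsto g\boxtimes h$ on generators of $[G,H]/R^{G\ast H}$, using the defining relations of $\eta$ (i.e.\ the elements of $R$) to check well-definedness: the relation $^x[g,h]^{-1}[{}^xg,{}^xh]\in R^{G\ast H}$ must map to $1_\boxtimes$, which is exactly the statement that $^x(g\boxtimes h) = {}^xg\boxtimes{}^xh$ up to the correction terms controlled by Proposition~\ref{P:2.8}(ii) and \eqref{E:2.4.2} — here one needs that conjugation in $G\boxtimes H$ by elements of $G$ and $H$ matches the action, which again comes down to full compatibility. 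Surjectivity is clear from the generators $g\boxtimes h$; checking the two composites are identities on generators finishes it.

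The main obstacle I expect is not any single computation but the bookkeeping of making the iterated semidirect product $\big(\tau(G,H)\rtimes H\big)\rtimes G$ genuinely well-defined: one must verify that $G$ acts on $\tau(G,H)\rtimes H$ by automorphisms (not merely set-maps), which forces a careful use of the ``mixed'' compatibility relations \eqref{fc1} and \eqref{fc4} relating the $G$-action and the $H$-action on the commutator subgroup. Once the right-hand group is known to be a legitimate group with the expected presentation-free description, the isomorphisms fall out from matching generators.
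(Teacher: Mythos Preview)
Your overall architecture is close to the paper's, but there is one genuine gap, and it lies exactly where you do something different from the paper.

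The paper does \emph{not} form the iterated semidirect product with $\tau(G,H)$ in the first slot; it forms $\big((G\boxtimes H)\rtimes H\big)\rtimes G$, with the explicit $G$-action ${}^g(b,h)=\big({}^gb\,(g\boxtimes h),\,h\big)$ (note the built-in correction factor $g\boxtimes h$). It then defines $\phi\colon G\ast H\to \big((G\boxtimes H)\rtimes H\big)\rtimes G$ on the free factors by $g\mapsto(1,1,g)$, $h\mapsto(1,h,1)$, checks $\phi([g,h])=(g\boxtimes h,1,1)$ and $\phi(R)=1$, and so obtains $\beta\colon\eta(G,H)\to \big((G\boxtimes H)\rtimes H\big)\rtimes G$. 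The inverse to $\psi$ from Lemma~\ref{L:4.1} then drops out as the restriction of $\beta$ to $\tau(G,H)$, with $\beta\psi=\id$ on generators. The first isomorphism then follows (the paper invokes the Short Five Lemma on the displayed diagram).

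Your route instead builds $(\tau(G,H)\rtimes H)\rtimes G$ using the internal conjugation actions in $\eta(G,H)$. That part is fine and in fact makes the first isomorphism nearly tautological: it is just the internal semidirect decomposition of $\eta(G,H)$ once one knows $G$ and $H$ embed (proof of Lemma~\ref{L:4.1}) and that $\tau(G,H)=\ker\overline{\sigma}$. But precisely because your first slot is $\tau(G,H)$ rather than $G\boxtimes H$, your forward map restricted to $\tau(G,H)$ is the identity, and you are left to manufacture $\tau(G,H)\to G\boxtimes H$ separately. Here is the gap: you propose to define it by $[g,h]R^{G\ast H}\mapsto g\boxtimes h$ ``on generators'' and check ``the defining relations of $\eta$ (i.e.\ the elements of $R$)''. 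But $\tau(G,H)=[G,H]/R^{G\ast H}$ is a \emph{subquotient} of $G\ast H$, not the group presented by generators $\{[g,h]\}$ and relators $R$. The commutators $[g,h]$ already satisfy many relations in $[G,H]$ (e.g.\ $[gg',h]=g[g',h]g^{-1}[g,h]$) involving conjugation by elements \emph{outside} $[G,H]$; you cannot verify these are respected by a map that is only defined on $[G,H]$. Showing that, modulo $R^{G\ast H}$, all such relations collapse to the two box-tensor relations is exactly the content of the theorem, so your well-definedness check is circular as stated.

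The fix is precisely the paper's device: enlarge the target so that the source can be taken to be $G\ast H$ (where the universal property of the free product applies), and then restrict. Concretely, replace $\tau(G,H)$ by $G\boxtimes H$ in your iterated semidirect product, define the $G$-action with the twist ${}^g(b,h)=\big({}^gb\,(g\boxtimes h),\,h\big)$, and let $\beta$ be the induced map from $\eta(G,H)$; its restriction to $\tau(G,H)$ is the inverse of $\psi$ you want. With that one change your argument becomes essentially the paper's.
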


\begin{proof}
We proceed by showing that the rows in the diagram at the beginning of this section
are exact and that the vertical homomorphisms exist. Then we will establish that two
of the vertical homomorphisms are indeed isomorphism. With the help of the Short
Five Lemma \cite{CW} we will obtain that the third vertical map is also an
isomorphism. Our claim then follows.

\

We first show that the two rows are exact. The exactness of the bottom row is
obvious. Turning to the top row, consider the canonical map $\sigma \colon G\ast
H\rightarrow G\times H$ defined by $\sigma(g_1h_1g_2h_2\cdots g_nh_n)=(g_1g_2\cdots
g_n,h_1h_2\cdots h_n)$. Clearly $\sigma$ is onto and $\ker \sigma=[G,H]$.
Furthermore, $\sigma(R)=(1,1)$, and hence we get an induced map
$\overline{\sigma} \colon \eta(G,H)\rightarrow G\times H$. This shows the exactness
of the
top row.

\

Now we will turn to showing the existence of the homomorphism $\beta$ in the above
diagram. First recall that we have actions of $G$ and $H$ on $G\boxtimes H$ given
by $^x(g\boxtimes h)=\;^xg\boxtimes\,^xh$ for all $x\in G$ or $H$. We will use
this action to form the semi-direct product $(G\boxtimes H)\rtimes H$. Observe that
$G$ acts on $(G\boxtimes H)\rtimes H$ by $^g(b,h)=\big(^gb (g\boxtimes h), h\big)$,
where
$b\in G\boxtimes H$. We claim that this is a well-defined action. Towards that
end, let $g,g'\in G$ and observe that $^{gg'}(b,h)=\big(\,^{gg'}b (gg'\boxtimes h),
h\big)$.
On the other hand $^g\big(\, ^{g'}(b,h)\big)=\big(\,^{gg'}b\;^g(g'\boxtimes
h)(g\boxtimes h), h\big)$.
As a consequence of \eqref{E:1.2.1} it follows that this is a well-defined action.
Using this action, we can form the semi-direct product $\big((G\boxtimes H)\rtimes
H\big)\rtimes G$. Let $\phi \colon G\ast H\rightarrow \big((G\boxtimes H)\rtimes
H\big)\rtimes G$ be
the homomorphism defined by $\phi(g)=(1,1,g)$ and $\phi(h)=(1,h,1)$ for all $g\in
G$, $h\in H$. Next we wish to show that $\phi([g,h])=(g\boxtimes h,1,1)$ and
$\phi(R)=(1,1,1)$. Multiplication defined in $\big((G\boxtimes H)\rtimes
H\big)\rtimes G$
as given by \eqref{E:2.10.1} yields
\begin{equation}\label{E:4.2.1}
\phi([g,h])=\phi(ghg^{-1}h^{-1})=(g\boxtimes h,h,g)(g^{-1}\boxtimes
h^{-1},h^{-1},g^{-1}).
\end{equation}

Observe that $(g\boxtimes h,h,g)(g^{-1}\boxtimes h^{-1},h^{-1},g^{-1})=\Big((g\boxtimes
h,h)  \ \big(\, ^g(g^{-1}\boxtimes h^{-1})(g\boxtimes h^{-1}),h^{-1}\big),1\Big)$.
Now using
\eqref{E:2.4.1} leads to $\Big((g\boxtimes h,h) \ \big(\,^g(g^{-1}\boxtimes
h^{-1})(g\boxtimes
h^{-1}),h^{-1}\big),1\Big)=\big((g\boxtimes h,h)(1,h^{-1}),1\big)$. Substituting
these equalities
into the right hand side of \eqref{E:4.2.1} yields $\phi([g,h])=(g\boxtimes h,1,1)$.

Proceeding as above and noting that $(1,1,x^{-1})(^xg\boxtimes\;^xh,1,1)=(g\boxtimes
h,1,1)$ we obtain \[\phi(x[g,h]^{-1}x^{-1}[^xg,^xh])=(1,1,x)\big((g\boxtimes
h)^{-1},1,1\big)(g\boxtimes h,1,x^{-1})=(1,1,1)\]
 for all $x\in G\backslash \{1\}$.
Similarly we can prove that $\phi(^x[g,h]^{-1}[^xg,^xh])=(1,1,1)$, for all $x\in
H\backslash \{1\}$. Hence we have an induced map $\beta \colon \eta(G,H)\rightarrow
\big((G\boxtimes H)\rtimes H\big)\rtimes G$. It follows that the homomorphism
$\beta$ exists.

 It remains to be shown that $\alpha$ is an
isomorphism. By Lemma \ref{L:4.1} we obtain that $\beta\psi$ is the identity on
$G\boxtimes H$. Identifying $\tau(G,H)$ and $G\boxtimes H$ with their images under
$\mu$ and $\mu'$, respectively, it follows that $\beta$ maps $\tau(G,H)$
isomorphically onto $G\boxtimes H$. Hence $\alpha$ is an isomorphism.
This completes the proof.
\end{proof}

As a consequence of Theorem \ref{T:4.2} and Theorem \ref{T:3.8}, we obtain the
following corollary.

\begin{corollary}
Let $G$ and $H$ be groups acting on each other fully compatibly. If $G$ and $H$ are
finite, then $\eta(G,H)$ is finite.
\end{corollary}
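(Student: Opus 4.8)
The plan is to deduce finiteness of $\eta(G,H)$ directly from the structural description furnished by Theorem \ref{T:4.2} together with the finiteness result of Theorem \ref{T:3.8}. By Theorem \ref{T:4.2}, there is an isomorphism $\eta(G,H)\cong\big(\tau(G,H)\rtimes H\big)\rtimes G$, and $\tau(G,H)\cong G\boxtimes H$. So it suffices to show the group $\big((G\boxtimes H)\rtimes H\big)\rtimes G$ is finite.

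First I would invoke Theorem \ref{T:3.8}: since $G$ and $H$ are finite and act on each other fully compatibly, $G\boxtimes H$ is a finite group. Then $(G\boxtimes H)\rtimes H$ is a semidirect product of two finite groups, hence has order $|G\boxtimes H|\cdot|H|$ and is finite. Iterating once more, $\big((G\boxtimes H)\rtimes H\big)\rtimes G$ is a semidirect product of the finite group $(G\boxtimes H)\rtimes H$ with the finite group $G$, so it too is finite, of order $|G\boxtimes H|\cdot|H|\cdot|G|$. Transporting along the isomorphism of Theorem \ref{T:4.2} gives that $\eta(G,H)$ is finite, with $|\eta(G,H)|=|G\boxtimes H|\cdot|G|\cdot|H|$.

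There is essentially no obstacle here: the entire content has already been extracted in the two theorems being cited, and the only remaining observation is the elementary fact that a (repeated) semidirect product of finite groups is finite. The one point worth stating carefully is that the semidirect product structures appearing in Theorem \ref{T:4.2} are genuine internal/external semidirect products with the indicated finite factors, so that the order of the total group is the product of the orders of the factors; this is immediate from the construction of $\big((G\boxtimes H)\rtimes H\big)\rtimes G$ given in the proof of Theorem \ref{T:4.2}, where each action used to form a semidirect product was verified to be well defined. Hence the corollary follows.
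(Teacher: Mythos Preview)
Your proof is correct and follows essentially the same approach as the paper: invoke Theorem~\ref{T:3.8} to get $G\boxtimes H$ finite, then use the isomorphism $\eta(G,H)\cong\big((G\boxtimes H)\rtimes H\big)\rtimes G$ from Theorem~\ref{T:4.2} to conclude $|\eta(G,H)|=|G\boxtimes H|\,|G|\,|H|$. The paper's proof is just a terser version of yours.
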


\begin{proof}
By Theorem \ref{T:3.8} we have $|G\boxtimes H|$ is finite if $G$ and $H$ are finite.
Thus by Theorem \ref{T:4.2} we obtain $|\eta(G,H)|=|G\boxtimes H||G||H|$. Hence
$\eta(G,H)$ is finite if $G$ and $H$ are finite.
\end{proof}

\section{Some finiteness conditions for Inassaridze's tensor product}\label{S:6}

In this section we introduce a tensor product which is a special case of Inassaridze's tensor product but which is more general than the nonabelian tensor product as introduced in \cite{BL1} and \cite{BL2}. We will use the methods developed for box-tensor product to give
some finiteness conditions for Inassaridze's tensor product. Let $G$ and $H$ be
groups acting on each other. Assume that $G$ and $H$ act on themselves by
conjugation. Then Inassaridze's tensor product $G\otimes H$ is a group generated by
the symbols $g\otimes h$ subject to the following three relations:
\begin{align*}
gg'\otimes h \ = \ & (^g g' \otimes \;^gh)(g \otimes h), \\
(g\otimes hh')\ = \ &(g\otimes h)(^hg\otimes \;^hh'), \\
(g\otimes h)(g'\otimes h')(g\otimes h)^{-1} \ = \ &(^{[g,h]}g'\otimes \;^{[g,h]}h'),
\end{align*}
for all $g,g'\in G$ and $h,h'\in H$, where $[g,h]=ghg^{-1}h^{-1}\in G*H$. It should
be noted at this point that in \cite{NI}, the author requires four conditions but
the above three conditions are sufficient. For Inassaridze's tensor product no
compatibility conditions are
required, but \eqref{fact} holds.

In \cite{NI1}, the author proves the following two theorems.

\begin{theorem}\cite[Theorem 9]{NI1}\label{T1}
Let $G$ and $H$ be groups acting on themselves by conjugation and each of which acts
on the other. Suppose the action of $H$ on $G$ is trivial. If $H$ is soluble, then
$G\otimes H$ is finite.
\end{theorem}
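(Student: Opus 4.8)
The plan is to induct on the derived length $d$ of $H$. The structural fact that makes this work is that the subgroup $M:=[H,H]$ is stable under the $G$-action on $H$ (automorphisms preserve commutator subgroups), so all of the ambient actions restrict to the pair $(G,M)$ and descend to the pair $(G,H/M)$, and that $M$, being a subgroup of $H$, acts trivially on $G$; the two ends of the induction are then glued by the short exact sequence $1\to K\to G\otimes H\to G\otimes(H/M)\to 1$. For the base case $d\le 1$, i.e. $H$ abelian: since $H$ acts trivially on $G$ we have ${}^{h}g=g$, and since $H$ is abelian ${}^{h}h'=h'$, so the relation $g\otimes hh'=(g\otimes h)({}^{h}g\otimes{}^{h}h')$ collapses to $g\otimes hh'=(g\otimes h)(g\otimes h')$; hence for each fixed $g$ the map $h\mapsto g\otimes h$ is a homomorphism $H\to G\otimes H$ and every generator $g\otimes h$ has finite order. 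A short computation with the action of $[g,h]\in G\ast H$ on $G$ and on $H$ (using again that $H$ acts trivially on $G$ and is abelian) gives ${}^{[g,h]}g'=g'$ and ${}^{[g,h]}h'=h'$, so the third defining relation becomes $(g\otimes h)(g'\otimes h')(g\otimes h)^{-1}=g'\otimes h'$ and $G\otimes H$ is abelian. Since there are only $|G|\,|H|$ generators, $G\otimes H$ is a finitely generated abelian torsion group, hence finite.

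For the inductive step $d\ge 2$, put $M=[H,H]$: it is normal in $H$, $G$-invariant, soluble of derived length $d-1$, and acts trivially on $G$, so $G\otimes M$ is finite by the inductive hypothesis. The inclusion $M\hookrightarrow H$ induces a homomorphism $G\otimes M\to G\otimes H$ (each defining relation of $G\otimes M$ is an instance of one for $G\otimes H$, using ${}^{g}M\subseteq M$ and ${}^{[g,m]}m'\in M$), whose image $K$ is generated by the finite set $S:=\{g\otimes m\mid g\in G,\ m\in M\}$; thus $K$ is finite and every element of $S$ has finite order. Using the third relation one checks, exactly as in the proof of Proposition~\ref{P:3.4}, that $S$ is closed under conjugation by the generators of $G\otimes H$: conjugating $g\otimes m$ by $g'\otimes h'$ yields $({}^{[g',h']}g)\otimes({}^{[g',h']}m)$, where ${}^{[g',h']}g=g$ (the action of $H$ on $G$ being trivial) and ${}^{[g',h']}m\in M$ (as $M\lhd H$). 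Hence $S$ is a finite normal subset of $G\otimes H$ consisting of elements of finite order, so $K=\langle S\rangle\lhd G\otimes H$ (this also follows from Dietzmann's Lemma~\ref{L:3.1}). Since the actions descend, $H\to H/M$ induces a surjection $G\otimes H\to G\otimes(H/M)$ with kernel $K$; as $H/M$ is abelian, $G\otimes(H/M)$ is finite by the base case, and therefore $|G\otimes H|\le|K|\,|G\otimes(H/M)|<\infty$.

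The one nonroutine point, and the main obstacle, is the identification of the kernel of $G\otimes H\to G\otimes(H/M)$ with $K$. The inclusion $K\subseteq\ker$ is immediate. For the reverse inclusion one shows that in $(G\otimes H)/K$ the class of $g\otimes h$ depends only on $g$ and on the coset $hM$ (from $g\otimes hm=(g\otimes h)(g\otimes hmh^{-1})$ with $hmh^{-1}\in M$), and that the resulting symbols satisfy the three defining relations of $G\otimes(H/M)$; the universal property of the presentation of $G\otimes(H/M)$ then produces a homomorphism $G\otimes(H/M)\to(G\otimes H)/K$ inverse to the induced map $(G\otimes H)/K\to G\otimes(H/M)$, so that map is an isomorphism. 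The care required lies precisely in verifying that Inassaridze's third relation passes to this quotient, since here it is that relation, rather than any compatibility condition, which governs conjugation.
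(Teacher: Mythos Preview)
Your inductive argument on the derived length of $H$ is correct and is essentially Inassaridze's original approach in \cite{NI1}; the identification of the kernel with $K$ is indeed the delicate point, and your sketch (or an appeal to the right-exact sequence analogous to \cite[Theorem~1(a)]{NI} in the second variable) closes it.

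However, the present paper does not give its own proof of this statement: Theorem~\ref{T1} is merely quoted from \cite{NI1}, and the paper's contribution is Theorem~\ref{niko1}, which dispenses with the solubility hypothesis entirely. The method there is much shorter than yours and transposes directly to the hypotheses of Theorem~\ref{T1}: since $H$ acts trivially on $G$ and on itself by conjugation, one has ${}^{h^{i-1}}g=g$ and ${}^{h^{i-1}}h=h$, so Lemma~\ref{L:2.6} gives $1_\otimes=g\otimes h^{k}=\prod_{i=1}^{k}{}^{h^{i-1}}(g\otimes h)=(g\otimes h)^{k}$ for $k=|h|$. Thus every generator has finite order, the generating set is finite and normal (exactly your Proposition~\ref{P:3.4} computation), and Dietzmann's Lemma finishes. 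Your induction uses solubility in an essential way and builds the right-exact sequence by hand; the paper's route bypasses both, at the cost of relying on the closedness of the generating set under conjugation (the third Inassaridze relation) and Dietzmann. What your approach buys is a transparent reduction to the abelian case and an explicit description of the kernel; what the paper's approach buys is a one-line finiteness argument that shows the solubility hypothesis is superfluous.
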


\begin{theorem}\cite[Theorem 12]{NI1}\label{T2}
Let $G$ and $H$ be groups acting on themselves by conjugation and each of which acts
on the other. Suppose the action of $H$ on $G$ is trivial. If $[G,H]^n$ is abelian
for some $n\geq 1$, then $G\otimes H$ is finite.
\end{theorem}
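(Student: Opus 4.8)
The plan is to imitate the Dietzmann-Lemma argument of Section \ref{S:4}, with the trivial action of $H$ on $G$ doing the essential work. First I would note that $G\otimes H$ is generated by $Y=\{g\otimes h\mid g\in G,\ h\in H\}$ and that $Y$ is a \emph{normal} generating set: the third defining relation of Inassaridze's product rewrites a conjugate of a generator, $(g\otimes h)(g'\otimes h')(g\otimes h)^{-1}={}^{[g,h]}g'\otimes{}^{[g,h]}h'$, again as a generator, while the first relation gives $(g\otimes h)^{-1}=g^{-1}\otimes{}^{g}h\in Y$; an induction on word length then shows $x\,Y\,x^{-1}\subseteq Y$ for every $x\in G\otimes H$, exactly as in Proposition \ref{P:3.4}.

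Next I would exploit that $H$ acts trivially on $G$: since ${}^{h}g=g$ and (conjugation being the self-action) ${}^{h^{j}}h=h$, the second defining relation collapses to $g\otimes hh'=(g\otimes h)\,(g\otimes{}^{h}h')$, and a routine induction in the spirit of Lemma \ref{L:2.6} yields $g\otimes h^{k}=(g\otimes h)^{k}$ for all $k\in\mathbb{N}$. Hence if $h$ has finite order $m$, then $(g\otimes h)^{m}=g\otimes h^{m}=g\otimes 1=1_{\otimes}$, so $g\otimes h$ has finite order. Consequently, once $Y$ is finite and each $h\in H$ has finite order, Dietzmann's Lemma \ref{L:3.1} applies and $G\otimes H$ is finite.

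It remains to secure finite order of the generators (and finiteness of $Y$) in situations where this is not already visible, and this is where the hypothesis ``$[G,H]^{n}$ abelian'' enters, playing the role solubility plays in \cite[Theorem 9]{NI1}. I would push the identity $(g\otimes h)^{m}=1$ through an ambient semidirect product $C$ built from $G\otimes H$, as in Lemma \ref{L:2.10}, Lemma \ref{L:3.2} and Proposition \ref{P:3.3}, and use the abelian layer $[G,H]^{n}$ to bound the relevant normal closure --- the analogue of $T=(1,\phi(A))^{C}$ --- thereby forcing the generators to have finite order before invoking Lemma \ref{L:3.1}. I expect this to be the main obstacle: since Inassaridze's product carries no compatibility hypothesis, there is no crossed module to lean on --- the natural candidate $g\otimes h\mapsto g\,{}^{h}g^{-1}$ is even identically trivial here, because ${}^{h}g=g$ --- so the finiteness of the ``derivative-type'' subgroup governing $G\otimes H$ has to be extracted from the abelian-layer hypothesis directly; this is in line with the fact that the unconditional theorem generalizing Theorems \ref{T1} and \ref{T2} later in this section (Theorem \ref{niko1}) instead assumes that both factors are finite.
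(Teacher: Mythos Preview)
First, a point of orientation: the paper does not prove Theorem~\ref{T2} at all --- it is merely \emph{cited} from \cite{NI1}. What the paper contributes is Theorem~\ref{niko1}, which subsumes Theorems~\ref{T1} and~\ref{T2} by assuming $G$ and $H$ finite and discarding the auxiliary hypotheses (solubility of $H$, respectively abelianness of $[G,H]^{n}$). In particular, the statement of Theorem~\ref{T2} as reproduced here has evidently dropped the hypothesis that $G$ and $H$ are finite; without it the conclusion is false (take $G=H=\mathbb{Z}$ with both mutual actions trivial: then $[G,H]$ is abelian and $G\otimes H\cong\mathbb{Z}$).

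With the finiteness of $G$ and $H$ restored, your first two paragraphs are correct and complete, and they are precisely the paper's proof of Theorem~\ref{niko1} with the roles of $G$ and $H$ interchanged (there $G$ acts trivially on $H$, here $H$ acts trivially on $G$). Normality of $Y$ comes from the third defining relation as you say, finiteness of $Y$ from finiteness of $G$ and $H$, and $(g\otimes h)^{m}=g\otimes h^{m}=1$ from the trivial action of $H$ on $G$ together with $H$ acting on itself by conjugation; Dietzmann's Lemma then closes the argument. The hypothesis ``$[G,H]^{n}$ abelian'' is simply not used --- which is exactly the point of Theorem~\ref{niko1}.

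Your third paragraph, by contrast, is chasing a phantom. You are trying to use the $[G,H]^{n}$-abelian hypothesis to manufacture finiteness of $Y$ and finite order of the generators \emph{without} assuming $G$ and $H$ finite, but the $\mathbb{Z}\otimes\mathbb{Z}$ example above shows no such argument can exist. The semidirect-product machinery you invoke (Lemmas~\ref{L:2.10} and~\ref{L:3.2}, Proposition~\ref{P:3.3}) all presuppose a finite target group to get started, and nothing in ``$[G,H]^{n}$ abelian'' supplies one; you also correctly note that the natural crossed-module map is identically trivial here, so there is no usable $\phi$ at all. The right reading is that the original theorem in \cite{NI1} already assumed $G$ and $H$ finite, and the abelian-layer condition was a technical crutch that Theorem~\ref{niko1} renders superfluous.
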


The next theorem which is a generalization of the above two theorems is now an easy
consequence of the methods developed in proving the finiteness of the box-tensor
product.

\begin{theorem}\label{niko1}
Let $G$ and $H$ be finite groups acting on themselves by conjugation and each of
which acts on the other. If $G$ acts on $H$ trivially, then $G\otimes H$ is finite.
\end{theorem}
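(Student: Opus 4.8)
The plan is to mimic the strategy used in Section~\ref{S:4} for the box-tensor product, adapting it to the relations defining Inassaridze's tensor product. Since $G$ acts trivially on $H$, the defining relations simplify: the third relation becomes $(g\otimes h)(g'\otimes h')(g\otimes h)^{-1}=\,^{h'^{-1}}g'\otimes h'$ is not quite right, so more carefully, $^{[g,h]}g'=\,^{gh g^{-1}h^{-1}}g'$ and since $G$ acts trivially on $H$, $[g,h]=g\,^{h}g^{-1}\in G$ (because $ghg^{-1}h^{-1}=g(hg^{-1}h^{-1})=g\,^{h}g^{-1}$), so the third relation reads $(g\otimes h)(g'\otimes h')(g\otimes h)^{-1}=\,^{g\,^hg^{-1}}g'\otimes h'$. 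The first step is to record these simplified relations and to observe, as in Corollary~\ref{C:2.5} and the $\lambda,\lambda'$ crossed modules, that $\lambda\colon G\otimes H\to G$, $\lambda(g\otimes h)=g\,^hg^{-1}$ is still a well-defined crossed module even without compatibility, because \eqref{fact} holds; this is exactly what is used implicitly in \cite{NI1}.

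Next I would set up the semidirect product $C=(G\otimes H)\rtimes G$ using the crossed module $\lambda$, and invoke Proposition~\ref{P:3.3} to conclude that $T=(1,\lambda(G\otimes H))^{C}=(1,D_H(G))^{C}$ is finite, since $G$ is finite. Then, paralleling Proposition~\ref{P:3.5}, I would show that $s\otimes h$ has finite order for all $s\in D_H(G)$ and $h\in H$: using the expansion formula analogous to \eqref{E:2.6.1} (which holds for Inassaridze's product from its first relation) together with $s^k=1$ and Lemma~\ref{L:3.2}, one gets $1_C=t(s\otimes h,1)^k$ with $t\in T$ finite, forcing $s\otimes h$ to have finite order. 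The analogue of Proposition~\ref{P:3.6} is easier: since $G$ acts trivially on $H$, every $h\in H$ lies in (the analogue of) $F_G(H)$, so from the relation $g\otimes h^k=(g\otimes h)^k$ we immediately get that $g\otimes h$ has finite order for all $g\in G$, $h\in H$, using $h^{|H|}=1$. Wait — the trivial action means $^hh'=hh'h^{-1}$ still, not trivial, so the second relation is $g\otimes hh'=(g\otimes h)(^hg\otimes\,^hh')=(g\otimes h)(g\otimes hh'h^{-1})$ since $^hg=g$; one then needs the argument of Theorem~\ref{T:3.8} with the correction terms $v_i\in X^{G\otimes H}$, where $X=\{g\otimes h\mid g\in D_H(G)\}$ and $X^{G\otimes H}$ is finite by Dietzmann's Lemma.

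Finally I would mimic Proposition~\ref{P:3.4}: using the third defining relation and \eqref{fact} (which substitutes for the compatibility used in deriving \eqref{E:3.4.1}), show that the generating set $Y=\{g\otimes h\mid g\in G,h\in H\}$ is a finite normal subset of $G\otimes H$ — every conjugate $x(g\otimes h)x^{-1}$ lies again in $Y$ since, for $x=u\otimes v$, the third relation gives $x(g\otimes h)x^{-1}=\,^{u\,^vu^{-1}}g\otimes h$, and then induct on the length of $x$ as a product of generators. Combining: $Y$ is a finite normal set consisting of elements of finite order (by the previous paragraph), so Dietzmann's Lemma (Lemma~\ref{L:3.1}) yields that $\langle Y\rangle=G\otimes H$ is finite.

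The main obstacle I anticipate is verifying that the crossed module $\lambda\colon G\otimes H\to G$ is well defined and well behaved \emph{without} the compatibility hypotheses — in particular checking that the semidirect product action used in Proposition~\ref{P:3.3} and the conjugation computation in the analogue of Proposition~\ref{P:3.4} go through using only \eqref{fact} and the trivial-action hypothesis. A secondary delicate point is the bookkeeping of the correction factors $v_i$ in the Theorem~\ref{T:3.8}-style induction, since here the roles of $D_H(G)$ and $F_G(H)$ are slightly different from the fully compatible case; but this is routine once the normality and finite-order facts are in place.
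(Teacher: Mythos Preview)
Your outline is far more elaborate than what is needed, and along the way you stumble into a real error. The paper's proof is essentially a one-liner once the finite-normal-set observation (your Proposition~\ref{P:3.4} analogue) is in place: instead of expanding in $h$, expand in $g$. Since $G$ acts on itself by conjugation (${}^{g}g=gg g^{-1}=g$) and on $H$ trivially (${}^{g}h=h$), the first defining relation $gg'\otimes h=({}^{g}g'\otimes{}^{g}h)(g\otimes h)$ gives immediately, for $g^{n}=1$,
\[
1_{\otimes}=g^{n}\otimes h=(g\otimes h)^{n}.
\]
No crossed module, no $D_H(G)$, no correction factors are needed to see that every generator has finite order; Dietzmann's Lemma then finishes the proof exactly as you say.

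Your attempt to expand in $h$ is where the trouble enters. You write ``$^{h}g=g$'', but the hypothesis is that $G$ acts trivially on $H$, not that $H$ acts trivially on $G$; so ${}^{h}g$ need not equal $g$, and likewise your earlier claim that ``every $h\in H$ lies in (the analogue of) $F_{G}(H)$'' is false, since $F_{G}(H)$ requires $h$ to act trivially on both $G$ and $H$, and $H$ acts on itself by conjugation. Your fallback to a full Theorem~\ref{T:3.8}-style argument with correction terms in $X^{G\otimes H}$ could in principle be made to work, but it hinges on verifying that $\lambda\colon G\otimes H\to G$ is a crossed module in the Inassaridze setting (which you correctly flag as the main obstacle), and all of that machinery is simply unnecessary once you expand on the correct side.
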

\begin{proof} First we will show that the set $T=\{g\otimes h \mid g\in G, h\in H\}$
is a finite normal set. Since \eqref{E:2.4.2} and \eqref{fact} hold for
Inassaridze's product, it follows from the proof of Proposition \ref{P:3.4} that $T$
is a finite normal set.
 Now we will show that $g\otimes h$ has finite order for all $g\in G$, $h\in H$.
Let $n\in \mathbb{N}$ be such that $g^n=1$. Using \eqref{E:2.6.1} and the fact that
$G$ acts trivially on $H$ and on itself by conjugation, we obtain that
$1 =(1\otimes h)= (g^n \otimes h)=(g\otimes h)^n$. Now an application of Dietzmann's
Lemma (Lemma \ref {L:3.1}) completes the proof.
\end{proof}

In the next theorem we consider a product which is more general than the nonabelian tensor product.
It is more general because we do not require the mutual actions to be compatible.
We prove the finiteness of this product when the two factors are
finite and  when the mutual actions are half compatible. So the following theorem is a generalization
of the Ellis-Thomas's theorem on the finiteness of the nonabelian tensor product as given
in \cite{E} and \cite{VT}. It is also a generalization of Theorem \ref{niko1}
because if one of the groups is acting trivially on the other, then the mutual actions are half compatible.

\begin{theorem}\label{niko2}
Let $G$ and $H$ be finite groups acting on each other. If the mutual actions are half compatible, then $G\otimes
H$ is finite.
\end{theorem}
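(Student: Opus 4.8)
The plan is to follow the same strategy used in the proof of Theorem \ref{T:3.8}, adapting it to Inassaridze's tensor product under the weaker hypothesis of half compatibility. By symmetry of the roles of $G$ and $H$ we may assume condition \eqref{E:1.3.2} holds, i.e.\ $^{(^gh)}g'=\;^g\big(^h(^{g^{-1}}g')\big)$ for all $g,g'\in G$ and $h,h'\in H$. First I would record that, exactly as in Proposition \ref{P:3.4}, the generating set $T=\{g\otimes h\mid g\in G, h\in H\}$ is a finite normal set in $G\otimes H$: this only uses relation \eqref{E:2.4.2} (which is one of the defining relations of Inassaridze's product) together with \eqref{fact}, which always holds; neither ingredient needs the missing half of compatibility. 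Hence, by Dietzmann's Lemma (Lemma \ref{L:3.1}), it suffices to prove that every generator $g\otimes h$ has finite order.

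To get finite order of the generators, I would mimic the chain Proposition \ref{P:3.5} $\Rightarrow$ Corollary \ref{C:3.7} $\Rightarrow$ Theorem \ref{T:3.8}, but only one of the two crossed-module maps survives. Since the groups act on themselves by conjugation, here $D_\rho(G)$ and $D_\rho(H)$ are trivial, so $\overline{G}=G$ and $\overline{H}=H$. Using the half-compatibility condition \eqref{E:1.3.2} one checks, as in Corollary \ref{C:2.5} part (iii) / Proposition \ref{P:2.8}, that the map $\lambda'\colon G\otimes H\to H$, $\lambda'(g\otimes h)=\;^ghh^{-1}$, is still a crossed module with the natural $H$-action $^h(g\otimes h')=\;^hg\otimes\;^hh'$ (it is precisely the condition \eqref{E:1.3.2} that is needed to verify $\lambda'$ is $H$-equivariant and the Peiffer identity; the other crossed module $\lambda$ would need \eqref{E:1.3.1}, which we are not assuming). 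Then, forming $C=(G\otimes H)\rtimes H$ and $T'=(1,\lambda'(G\otimes H))^C=(1,D_G(H))^C$, Proposition \ref{P:3.3} gives that $T'$ is finite since $H$ is finite. Arguing as in Proposition \ref{P:3.5} with the expansion \eqref{E:2.6.2} in place of \eqref{E:2.6.1}: for $h\in D_G(H)$ pick $k$ with $h^k=1$, so $1_\otimes=g\otimes h^k=\prod_{i=1}^k{}^{h^{i-1}}(g\otimes h)$, and Lemma \ref{L:3.2} then forces $g\otimes h$ to have finite order for all $g\in G$, $h\in D_G(H)$. Collecting conjugates, the set $X=\{g\otimes h\mid h\in D_G(H)\}$ generates a finite normal subgroup $X^{G\otimes H}$ by Lemma \ref{L:3.1}.

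Finally I would run the counterpart of the argument in Theorem \ref{T:3.8}, but contracting in the $G$-variable rather than the $H$-variable, to reach all generators. Since $^gg'\equiv g' \pmod{D_G(G)}$ and, more to the point, working modulo $X^{G\otimes H}$ one has $^gg'\otimes\;^gh\equiv (g'\otimes h)\cdot(\text{element of }X^{G\otimes H})$ — here one uses that $^gh\equiv h$ because $H$ acts on $G$ by conjugation, wait, rather that the deviation of $^gh$ from $h$ lies in $D_G(H)$ so the corresponding factor lands in $X^{G\otimes H}$ — one expands $1_\otimes=g^n\otimes h$ for $n$ with $g^n=1$ using \eqref{E:1.2.1} and induction, obtaining $1_\otimes=(g\otimes h)^n v$ with $v\in X^{G\otimes H}$ of finite order, whence $g\otimes h$ has finite order. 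With all generators of finite order and the generating set a finite normal set, Dietzmann's Lemma yields that $G\otimes H$ is finite. The main obstacle I anticipate is the bookkeeping in the last step: verifying carefully that under only \eqref{E:1.3.2} the "correction terms" produced when passing from $^gg'\otimes\;^gh$ to $g'\otimes h$ really do lie in the finite normal subgroup $X^{G\otimes H}$, since without full compatibility one cannot freely use \eqref{E:2.4.3}–\eqref{E:2.4.6}; one has to isolate exactly which of those expansion identities remain valid with a single half of compatibility and route the argument through those.
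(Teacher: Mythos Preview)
Your approach is genuinely different from the paper's, and the obstacle you flag at the end is real and, I believe, fatal to this route rather than mere bookkeeping. The trouble is already in the crossed-module step. Under \eqref{E:1.3.2} alone, neither $\lambda$ nor $\lambda'$ is a crossed module: checking that $\lambda'(g\otimes h)={}^ghh^{-1}$ respects the relation $g\otimes hh'=(g\otimes h)({}^hg\otimes{}^hh')$ requires ${}^{({}^hg)}(hh'h^{-1})=h({}^gh')h^{-1}$, which is \eqref{E:1.3.1}; conversely, the Peiffer identity ${}^{\lambda'(g\otimes h)}(g'\otimes h')=(g\otimes h)(g'\otimes h')(g\otimes h)^{-1}$ forces ${}^{({}^gh)h^{-1}}g'={}^{[g,h]}g'$, and that is exactly \eqref{E:1.3.2}. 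So whichever half you assume, the \emph{other} half is needed to get a full crossed module, and Proposition~\ref{P:3.3} uses Peiffer twice (once to rewrite ${}^{\phi({}^bx_i)}y^{-1}$ as a conjugate, and once more to replace $y({}^bx_i)y^{-1}$ by ${}^{\phi(y)b}x_i$). Without Peiffer you cannot bound the conjugacy set $S_i$, so you do not get the analogue of Proposition~\ref{P:3.5}, and the finite normal subgroup $X^{G\otimes H}$ you build from $\{g\otimes h:h\in D_G(H)\}$ is not available to absorb the correction terms in your last expansion.

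The paper sidesteps this entirely. Assuming (say) that the action of $G$ on $H$ is compatible, it forms the normal subgroup $X\trianglelefteq G$ generated by the obstructions ${}^{({}^gh)}g'\cdot{}^{ghg^{-1}}g'^{-1}$ to the other half of compatibility, shows $X$ acts trivially on $H$, and enlarges it to an $H$-stable $X'$ which still acts trivially on $H$. Then $G/X'$ and $H$ act compatibly on one another, so $(G/X')\otimes H$ is the Brown--Loday tensor product and is finite by Corollary~\ref{C:3.9}; meanwhile $X'\otimes H$ is finite by Theorem~\ref{niko1} since $X'$ acts trivially on $H$. Inassaridze's exact sequence $X'\otimes H\to G\otimes H\to (G/X')\otimes H\to 1$ then gives the result. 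The point is that rather than trying to salvage the crossed-module machinery under half compatibility, one kills the obstruction subgroup and reduces to two cases (trivial action; fully compatible action) that are already in hand.
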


\begin{proof} Without loss of generality one can assume that the action of $G$ on
$H$ is compatible. Let $X$ be a normal subgroup of $G$
generated by the elements $^{(^gh)}g'\;^{ghg^{-1}}g'^{-1}$  for all $g,g'\in G$ and
$h\in H$. We claim that $X$ acts trivially on $H$. To prove our claim, it suffices
to check that $^{(^{(^gh)}g')}h'=\; ^{(^{ghg^{-1}}g')}h'$ , for all $g,g'\in G$ and
$h,h'\in H$. Using \eqref{fact} and the fact that $G$ acts compatibly on $H$, we
obtain
\begin{equation}\label{E:6.2.1}
^{(^{(^gh)}g')}h'= \;^{(^gh)g'(^gh^{-1})}h'= \;^{ghg^{-1}g'gh^{-1}g^{-1}}h'.
\end{equation}
Using the conjugation action of $G$ on itself and the compatible action of $G$ on
$H$, we arrive at
\begin{equation}\label{E:6.2.2}
^{(^{ghg^{-1}}g')}h'= \;^{^{gh}(g^{-1}g'g)}h'= \;^{g\,^{h}(g^{-1}g'g)g^{-1}}h'=
\;^{gh(g^{-1}g'g)h^{-1}g^{-1}}h'.
\end{equation}
So our claim follows from \eqref{E:6.2.1} and \eqref{E:6.2.2}. Let $X'$ be the
normal subgroup of $G$ generated by the elements $x$ and $^hx'$ for all $x, x'\in
X$, $h\in H$. Notice that $X\subseteq X'$. Since $X$ acts trivially on $H$ and the
action of $G$ on $H$ is compatible, $X'$ also acts trivially on $H$. Observe that
$X'$ is closed under the action of $H$. Hence we have an action of $G/X'$ on $H$ and
this action is compatible. We also have an action of $H$ on $G/X'$ which is also
compatible according to the definition of $X'$. Therefore, the nonabelian tensor
product $(G/X')\otimes H$ is finite. Using the following exact sequence which
appears in \cite[Theorem 1.\,(a)]{NI}
\[
X'\otimes H\to G\otimes H \to (G/X')\otimes H \to 1
\]
and Theorem \ref{niko1}, we obtain that $G\otimes H$ is finite.
\end{proof}

\begin{Remark}
The normal subgroup $X$ is denoted by Comp\;$G(H)$ in \cite{NI1}. We are using $X$
for notational convenience.
\end{Remark}

\begin{corollary}\label{}
Let $G$ and $A$ be finite groups acting on each other. If the mutual actions
are half compatible, then the nonabelian
homology groups $H_i(G, A)$ are finite when $i=0,1$.
\end{corollary}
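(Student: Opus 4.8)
The plan is to identify $H_0(G,A)$ and $H_1(G,A)$ with subquotients of objects whose finiteness has already been established, principally the nonabelian tensor product $G\otimes A$ appearing in Theorem~\ref{niko2}. Recall from \cite{NI} that, for groups $G$ and $A$ acting on each other, the low-dimensional nonabelian homology groups are computed from a low-degree portion of the relevant complex: $H_0(G,A)$ is a quotient of $A$ (by the subgroup generated by the deviations ${}^g a\, a^{-1}$, i.e.\ the coinvariants-type object), and $H_1(G,A)$ fits into an exact sequence whose adjacent terms involve $G\otimes A$ and a free-type term, so that $H_1(G,A)$ is a subquotient of $G\otimes A$ together with a finite group. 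Concretely, I would invoke the exact sequences from \cite[Theorem~1]{NI} that relate $G\otimes A$, $G$, $A$, and the homology groups $H_i(G,A)$ in low degrees; these are exactly the tools already used in the proof of Theorem~\ref{niko2}.

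First I would handle $H_0(G,A)$: since $A$ is finite, any quotient of $A$ is finite, and $H_0(G,A)$ is such a quotient by its definition in \cite{NI}; this case is immediate and requires no compatibility hypothesis. Next, for $H_1(G,A)$, I would write down the relevant five-term (or shorter) exact sequence from \cite[Theorem~1]{NI}, in which $H_1(G,A)$ is sandwiched between a subgroup of $G\otimes A$ and a finite group built from $G$ and $A$ (a quotient of something like $\ker(G\otimes A\to G\times A)$ and a piece of $H_0$). By Theorem~\ref{niko2}, the hypothesis that $G$ and $A$ are finite with half compatible mutual actions guarantees $G\otimes A$ is finite; hence every subgroup and every quotient of $G\otimes A$ is finite, and every extension of a finite group by a finite group is finite. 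Assembling these observations along the exact sequence yields that $H_1(G,A)$ is finite.

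The main obstacle is bookkeeping rather than conceptual: one must cite the precise low-degree exact sequence from \cite{NI} and check that each term flanking $H_i(G,A)$ for $i=0,1$ is either a subquotient of $G\otimes A$ or a subquotient of a finite group assembled from $G$ and $A$, so that finiteness propagates. In particular one should verify that the half compatibility hypothesis is exactly what is needed to apply Theorem~\ref{niko2} to $G\otimes A$, and that no stronger (full) compatibility enters the definition of $H_i(G,A)$ for $i\le 1$. Once the relevant portion of \cite[Theorem~1]{NI} is quoted, the proof is a short diagram chase: finiteness of $G\otimes A$ (Theorem~\ref{niko2}) plus finiteness of $G$ and $A$ forces finiteness of the two neighbours of $H_i(G,A)$, and hence of $H_i(G,A)$ itself, for $i=0,1$.
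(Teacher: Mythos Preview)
Your approach is essentially the same as the paper's: use Theorem~\ref{niko2} to get $G\otimes A$ finite under the half compatibility hypothesis, and then observe that $H_0(G,A)$ and $H_1(G,A)$ are subquotients of finite objects. The paper, however, is far more direct than your sketch: it simply records that by definition (from \cite{NI}) one has $H_1(G,A)=\ker f$ and $H_0(G,A)=\coker f$ for the explicit homomorphism $f\colon G\otimes A\to A/A'$, $f(g\otimes a)={}^gaa^{-1}A'$, where $A'$ is the normal subgroup of $A$ generated by the elements ${}^{({}^ag)}a'\,{}^{aga^{-1}}a'^{-1}$. Thus $H_1(G,A)$ is a subgroup of the finite group $G\otimes A$, and $H_0(G,A)$ is a quotient of the finite group $A/A'$; no five-term sequence or diagram chase is needed. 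Your talk of ``sandwiching'' $H_1$ in an exact sequence and checking flanking terms is unnecessary overhead once you quote this identification; I would replace the vague plan with the one-line citation of $H_i$ as $\ker f$ and $\coker f$.
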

\begin{proof} This result is straightforward, since $H_1(G, A)=\ker f$ and $H_0(G,
A)=\coker f$,
where $f \colon  G\otimes A \to A/A'$, $f(g\otimes a)=\;^gaa^{-1}A'$, and $A'$ is the
normal subgroup of $A$ generated by the elements $^{(^ag)}a' \;^{aga^{-1}}a'^{-1}$
for all $a,a'\in A$ and $g\in G$.
\end{proof}

\


\begin{thebibliography}{[***]}
\bibitem{BL1} R. Brown and J.L. Loday, Excision homotopique en basse dimension, {\it
C.R. Acad. Sci. Paris S\'er. I Math.}
\textbf{298} (1984), 353--356.

\bibitem{BL2} R. Brown and J.L. Loday,  Van Kampen theorems for diagrams of spaces,
{\it Topology}
\textbf{26} (1987), 311--335.

\bibitem{BJR} R. Brown, D.L. Johnson and E.F. Robertson, Some computations of
non-abelian tensor products of groups, {\it J.  Algebra}
\textbf{111} (1987), 177--202.

\bibitem{D} A.P. Dietzmann, \"Uber $p$-Gruppen, {\it Dokl. Akad. Nauk, SSSR}
\textbf{15} (1937), 71--76.

\bibitem{E}  G.J. Ellis, The non-abelian tensor product of finite groups is finite,
{\it J. Algebra}
\textbf{111} (1987), 203--205.

\bibitem{E1}  G.J. Ellis and F. Leonard, Computing Schur multipliers and tensor
products of finite groups, {\it Proc. Roy. Irish Acad. Sect. A,}
\textbf{95} (1995), 137--147.

\bibitem{E2} G.J. Ellis and R. Steiner, Higher-dimensional crossed modules and the
homotopy groups of (n+1)-ads, {\it J. Pure Appl. Algebra}
\textbf{46} (1987), 117--136.

\bibitem{G} The GAP Group, GAP--Groups, Algorithms, and
Programming, Version 4.4.12; 2008. (http://www.gap-system.org)

\bibitem{GH} N.D. Gilbert and P.J. Higgins, The nonabelian tensor product of groups
and related constructions, {\it Glasgow Math. J.}
\textbf{31} (1989), 17--29.

\bibitem{NI} N. Inassaridze, Non-abelian tensor products and non-abelian homology of
groups, {\it J. Pure Appl. Algebra}
\textbf{112} (1996), 191--205.

\bibitem{NI1} N. Inassaridze, Finiteness of a nonabelian tensor product of groups,
{\it Theory Appl. Categ.}
\textbf{2} (1996), 55--61.

\bibitem{LCK} L.-C. Kappe, Nonabelian tensor products of groups: the commutator
connection, {\it Groups St. Andrews 1997 in Bath, II}, pp. 447--454,
 London Math. Soc. Lecture Note Ser. 261, Cambridge Univ. Press, Cambridge, 1999.

\bibitem{KT} L.-C. Kappe and V.Z. Thomas, The nilpotency and solvability of the
box-tensor product of groups, in preparation.

\bibitem{DR} D.J.S. Robinson, {\it A Course in the Theory of Groups}, Graduate Texts
in Mathematics 80,
 Springer-Verlag, New York-Berlin, 1982.

\bibitem{NR} N.R. Rocco, Nonabelian tensor products under Engelian actions: an
approach via a related construction, preprint, University of Brasilia, 1997.

\bibitem{VT} V.Z. Thomas, The non-abelian tensor product of finite groups is
finite: A homology-free proof, {\it  Glasgow Math. J.}
\textbf{52} (2010), 473--477.

\bibitem{MV} M.P. Visscher, On the nilpotency and solvability length of nonabelian
tensor products of groups, {\it Arch. Math}
\textbf{73} (1999), 161--171.

\bibitem{CW} C.A. Weibel, {\it An introduction to homological algebra}, Cambridge
Univ. Press, Cambridge, 1994.

\end{thebibliography}
\end{document}